\definecolor{bleu_sombre}{rgb}{0,0,0.6}  \definecolor{rouge_sombre}{rgb}{0.8,0,0}\definecolor{vert_sombre}{rgb}{0,0.6,0}
\theoremstyle{plain}
\newtheorem{theorem}{{Theorem}}[section] %\sc{Théorème} pour avoir des petites capitales (mais ce n'est plus en gras)
\newtheorem*{theorem*}{{Theorem}}
\newtheorem{proposition}[theorem]{Proposition}
\newtheorem*{proposition*}{Proposition}
\newtheorem*{corollary*}{Corollary}
\newtheorem{lemma}[theorem]{Lemma}
\newtheorem*{lemma*}{Lemma}
\theoremstyle{definition}
\newtheorem*{definition*}{Definition}
\theoremstyle{remark}
\newcommand {\limt}[2]{\xrightarrow[#1 \to #2]{}}
\newcommand{\fonc}[4] { \left\{ \begin{array}{ccc} #1 & \to & #2 \\ #3 & \mapsto & #4 \end{array} \right. }
\newcommand{\abs}[1]{\left\vert #1\right\vert}        % valeur absolue
\newcommand{\nr}[1]{\left\Vert #1\right\Vert}         % norme
\newcommand{\innp}[2]{\left< #1 , #2 \right>}         
\newcommand{\set}[1]{\left\{ #1 \right\}}		
\newcommand{\Ii}[2] {\left\{ #1,\dots,#2 \right\}}
\renewcommand{\leq}{\leqslant}	\renewcommand{\geq}{\geqslant}
\newcommand{\inv}{^{-1}}
\newcommand{\bigo}[2]{\mathop{\Oc}\limits_{#1 \to #2}}
\newcommand{\st}{\,:\,}
\renewcommand{\Re}{\mathsf{Re}}        
\renewcommand{\Im}{\mathsf{Im}}
\newcommand{\Dom}{\mathsf{Dom}}
\renewcommand{\ker}{\mathsf{ker}} 
\newcommand{\Id}{\mathsf{Id}}
\newcommand{\R}{\mathbb{R}}		\newcommand{\C}{\mathbb{C}}
\newcommand{\N}{\mathbb{N}}	\newcommand{\Z}{\mathbb{Z}}
\renewcommand{\a}{\alpha}\renewcommand{\b}{\beta}\newcommand{\g}{\gamma}\renewcommand{\d}{\delta}\newcommand{\D}{\Delta}\newcommand{\e}{\varepsilon}\newcommand{\z}{\zeta} \renewcommand{\th}{\theta}\newcommand{\Th}{\Theta}\renewcommand{\k}{\kappa}\renewcommand{\l}{\lambda}\newcommand{\s}{\sigma}\newcommand{\f}{\varphi}\newcommand{\vf}{\phi}\renewcommand{\o}{\omega}\renewcommand{\O}{\Omega}
\newcommand{\Cc}{{\mathcal C}}\newcommand{\Dc}{{\mathcal D}}\newcommand{\Hc}{{\mathcal H}}\newcommand{\Lc}{{\mathcal L}}\newcommand{\Nc}{{\mathcal N}}\newcommand{\Oc}{{\mathcal O}}\newcommand{\Pc}{{\mathcal P}}\newcommand{\Qc}{{\mathcal Q}}\newcommand{\Rc}{{\mathcal R}}\newcommand{\Sc}{{\mathcal S}}\newcommand{\Tc}{{\mathcal T}}\newcommand{\Uc}{{\mathcal U}}\newcommand{\Vc}{{\mathcal V}}\newcommand{\Zc}{{\mathcal Z}}
\newcommand{\stepp}{\noindent {\bf $\bullet$}\quad }
\newcommand{\detail}[1]
{
% \noindent \begin{quotation} \noindent \scriptsize {\bf Pour m\'emoire :}\\ #1 \end{quotation}  %ligne a enlever pour cacher ces details
}
\begin{document}

\newcommand{\HH}{\mathscr H}\newcommand{\LL}{\mathscr L}\newcommand{\EE}{\mathscr E} \newcommand{\SSS}{\mathscr S}

\author{Radhia Ayechi, Ilhem Boukhris and Julien Royer}
% \author{Radhia Ayechi}
\address[R. Ayechi]{LAMMDA -- Universit\'e de Sousse -- ESST Hammam Sousse -- Rue Lamine El Abbessi, Hammam Sousse, 4011, Tunisia.}
\email{radhiaayechi@essths.u-sousse.tn}

% \author{Ilhem Boukhris}
\address[I. Boukhris]{LAMMDA -- Universit\'e de Sousse -- ESST Hammam Sousse -- Rue Lamine El Abbessi, Hammam Sousse, 4011, Tunisia.}
\email{ilhemboukhris@essths.u-sousse.tn}

% \author{Julien Royer}
\address[J. Royer]{Institut de Math\'ematiques de Toulouse -- UMR 5219 -- Universit\'e Toulouse 3, CNRS -- UPS, F-31062 Toulouse Cedex 9, France.}
\email{julien.royer@math.univ-toulouse.fr}

\title[A system of Schr\"odinger equations in a wave guide]{Energy decay for a system of Schr\"odinger equations in a wave guide}

\subjclass[2010]{47N50, 47A10, 35B40, 47B28, 47B44, 35J05, 35G45, 35P10}

% 47N50: Application of operator theory in the physical science.
% 47B44: Linear accretive operators, dissipative operators, etc.
% 47B28: Nonselfadjoint operators.
% 47A10: Spectrum, resolvent.
% 35B40: Asymptotic behavior of solutions to PDE.
% 35J05: Laplace operator, Helmoltz equation, Poisson equation.
% 35G45: Boundary value problems for systems of linear higher-order PDEs
% 35P10: Completeness of eigenfunctions and eigenfunction expansions in context of PDEs

\keywords{Systems of PDEs, damped Schr\"odinger equation, wave guide, eigenvalue asymptotics, Riesz basis}

\begin{abstract}
We prove exponential decay for a system of two Schr\"odinger equations in a wave guide, with coupling and damping at the boundary. This relies on the spectral analysis of the corresponding coupled Schr\"odinger operator on the one-dimensional cross section. We show in particular that we have a spectral gap and that the corresponding generalized eigenfunctions form a Riesz basis.
\end{abstract}

\maketitle

\section{Introduction}

Let $d \geq 2$, $\ell > 0$ and $\O = \R^{d-1} \times ]0,\ell[$. All along the paper, a generic point in $\O$ is denoted by $(x,y)$ with $x \in \R^{d-1}$ and $y \in ]0,\ell[$. We consider on $\O$ a system of Schr\"odinger equations 
\begin{equation} \label{system}
\begin{cases}
i \partial_t u + \D u = 0,\\
i \partial_t v + \D v = 0,
\end{cases}
\quad \text{on } \R_+ \times \O.
\end{equation}
Given $a > 0$ and $b \in \R^*$, damping and coupling are given by the boundary conditions
\begin{equation} \label{system-boundary-conditions}
\begin{cases}
\partial_\nu u(t;x,0) = ia u(t;x,0) + ibv(t;x,0), \\
\partial_\nu v(t;x,0) = - ib u(t;x,0),\\
\partial_\nu u(t;x,\ell) = \partial_\nu v (t;x,\ell) = 0,
\end{cases}
\quad \forall t > 0, \forall x \in \R^{d-1}.
\end{equation}
We could similarly consider the problem with damping and/or coupling on both sides of the boundary. This problem is completed with the initial conditions
\begin{equation} \label{system-CI}
u|_{t = 0} = u_0, \quad v|_{t = 0} = v_0,
\end{equation}
where $u_0,v_0 \in L^2(\O)$.\\

We will check that the problem \eqref{system}-\eqref{system-CI} is well posed. If $U = (u,v)$ is a solution, then for $t \geq 0$ we consider the energy
\[
E(t;U) = \nr{u(t)}_{L^2(\O)}^2 + \nr{v(t)}_{L^2(\O)}^2.
\]
A straightforward computation shows that $E$ is a non-increasing function of time:
\[
\frac d {dt} E(t;U) = -2a \int_{\R^{d-1}} \abs{u(t;x,0)}^2 \, dx \leq 0.
\]
In this paper we are interested in the decay of this energy when the time $t$ goes to $+\infty$.

There is already a large litterature about the energy decay for the Schr\"odinger equation or for the closely related damped wave equation, on compact or non-compact domains, with damping in the domain or at the boundary.

For the damped wave equation on a compact domain, it is known that as soon as we have effective damping in an open subset of the domain or of its boundary, then the energy goes to 0 \cite{Haraux85,Lebeau96}. The decay is uniform with respect to the initial condition (and hence exponential) if and only if all the trajectories of the corresponding classical problem (the rays of light) go through the damping region (see \cite{RauchTay74} for the damping in the domain and \cite{BardosLebRau89} for the damping at the boundary). Otherwise we have at least logarithmic decay with a loss of regularity \cite{Lebeau96,LebeauRob97}. There are intermediate rates of decay when the set of undamped classical trajectories is small and the classical flow is unstable near these trajectories (see for instance \cite{BurqHit07,AnantharamanLea14}).

Similar results have been proved for the (undamped) Schr\"odinger and wave equations in unbounded domains. In this case we look at the energy on a compact subset. It goes to zero if the energy escapes to infinity. The contribution of high frequencies behaves as above. The local energy always goes to 0, at least with logarithmic decay and loss of regularity, and uniformly with decay faster than any negative power of $t$ if and only if all the classical trajectories go to infinity. On the other hand the local energy of the contribution of low frequencies always decays uniformly, with a polynomial rate of decay. We refer for instance to \cite{LaxPhillips67,Ralston69,Burq98,Bouclet11,BonyHaf12}.

We can also consider the damped Schr\"odinger or wave equation in an unbounded domain. If we are interested in the local energy decay, then the geometric condition for high frequencies is that all classical trajectories should go either to infinity or through the damping region. The contribution of low frequencies behaves as in the undamped case if the damping is localized (see for instance \cite{AlouiKhe02,AlouiKhe07,AlouiKhe10,BoucletRoy14,royer-dld-energy-space}), while it behaves like the solution of some heat equation if the damping is effective at infinity (see \cite{Matsumura76,MallougRoy18,JolyRoy18} and references therein).

Here we are interested in the global energy for damped Schr\"odinger equations on a wave guide with damping at infinity. The case of a single Schr\"odinger equation was discussed in \cite{royer-diss-schrodinger-guide}. In that case it was already remarkable that the energy decays exponentially without the geometric control condition for the contribution of high frequencies. Here we have two equations, and only the first is damped (and again, the classical trajectories parallel to the boundary never see the damping). Moreover, the coupling itself is supported by the boundary and does not satisfy the geometric control condition. However, we observe that the energy of both components $u$ and $v$ goes to 0, and furthermore the decay is uniform and hence exponential. Our main result in this paper is the following.

\begin{theorem} \label{th-exp-decay}
Let $a > 0$ and $b \in \R^*$. There exist $\gamma > 0$ and $C > 0$ such that for $(u_0,v_0) \in L^2(\O) \times L^2(\O)$ and $t > 0$ we have 
\[
\nr{u(t)}_{L^2(\O)} + \nr{v(t)}_{L^2(\O)} \leq C e^{-\g t} \big( \nr{u_0}_{L^2(\O)} + \nr{v_0}_{L^2(\O)}\big), 
\]
where $(u,v)$ is the solution of \eqref{system}-\eqref{system-CI}.
\end{theorem}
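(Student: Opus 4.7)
The plan is to take the Fourier transform of the system in the unbounded variable $x \in \R^{d-1}$, reduce \eqref{system}--\eqref{system-CI} at each frequency $\xi$ to an evolution equation on the cross-section $]0,\ell[$, and then combine a uniform spectral gap with a Riesz basis expansion for the associated non-self-adjoint operator. At frequency $\xi$ the system reads $i\partial_t \hat U = (H_0 + |\xi|^2)\hat U$ on $L^2(]0,\ell[)^2$, where $H_0 = -\partial_y^2 \otimes \Id_{\C^2}$ is equipped with the boundary conditions \eqref{system-boundary-conditions} (which do not involve $x$). The scalar factor $e^{-it|\xi|^2}$ is unitary, so $\nr{e^{-it(H_0+|\xi|^2)}W}_{L^2} = \nr{e^{-itH_0}W}_{L^2}$ uniformly in $\xi$, and by Plancherel the theorem reduces to a uniform estimate of the form $\nr{e^{-itH_0}W}_{L^2} \leq C e^{-\g t}\nr{W}_{L^2}$.

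Next I would analyze the spectrum of $H_0$. The Neumann condition at $y=\ell$ forces $-U'' = \lambda U$ to have the form $U(y) = \binom{\a}{\b}\cos(k(y-\ell))$ with $k^2 = \lambda$, and inserting this into the boundary conditions at $y=0$ gives a $2\times 2$ linear system whose characteristic equation factors as $k\tan(k\ell) = t_\pm$, with $t_\pm = \tfrac12(-ia \pm \sqrt{4b^2-a^2})$ the two roots of $z^2 + iaz - b^2 = 0$. A perturbative analysis around the Neumann modes $k \approx n\pi/\ell$ produces two sequences of eigenvalues $\lambda_n^\pm = (n\pi/\ell)^2 + 2t_\pm/\ell + O(1/n^2)$, and since $\Im(t_\pm) < 0$ in all cases (whether $4b^2 - a^2$ is positive, negative or zero), one has $\Im \lambda_n^\pm \to 2\Im(t_\pm)/\ell < 0$. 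On the other hand, the energy identity of the introduction rewrites as $\Im\langle U, H_0 U\rangle = -a|u(0)|^2$, so any real eigenvalue must have $u(0) = 0$; the coupling at the boundary with $b \neq 0$ then forces $v(0) = v'(0) = 0$, and uniqueness of the Cauchy problem for $-U'' = \lambda U$ yields $U \equiv 0$. No eigenvalue of $H_0$ is therefore real, and since $H_0$ has compact resolvent this combined with the asymptotics gives a uniform spectral gap $\g_0 := -\sup_{\lambda \in \Sp(H_0)} \Im \lambda > 0$.

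Finally, $H_0$ is a boundary perturbation of the self-adjoint Neumann operator $-\partial_y^2 \otimes \Id_{\C^2}$, which has compact resolvent and simple, asymptotically well separated eigenvalues (grouped in $\C^2$-valued pairs). A Keldysh-type theorem, or an explicit biorthogonal construction using the adjoint $H_0^*$, shows that the generalized eigenfunctions $(U_n)$ of $H_0$ form a Riesz basis of $L^2(]0,\ell[)^2$. Expanding $W = \sum_n c_n U_n$, the pointwise bound $|e^{-it\lambda_n}| = e^{t\Im\lambda_n} \leq e^{-\g t}$ for any $\g < \g_0$ combined with the Riesz basis equivalence yields $\nr{e^{-itH_0}W}_{L^2} \leq C e^{-\g t}\nr{W}_{L^2}$, which together with the reduction step proves the theorem. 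The principal obstacle is precisely this last step: for a non-self-adjoint operator a spectral gap does not automatically imply semigroup decay, and establishing the Riesz basis property with uniform bounds (controlling both the asymptotic distribution of the eigenvalues and the norms of the spectral projectors of $H_0$) is where most of the work lies.
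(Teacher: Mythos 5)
Your proposal is correct in substance and rests on the same two pillars as the paper --- a spectral gap and a Riesz basis of generalized eigenfunctions for the transverse operator $\Tc_{a,b}$ (the paper's Theorem \ref{th-spectrum-Tc}, proved in Sections \ref{sec-eigenvalues}--\ref{sec-Riesz-basis}; your characteristic equation $k\tan(k\ell)=t_\pm$ is equivalent to the paper's $\phi_\pm(z)=0$ since $t_\pm=-i\mu_\pm$, and your asymptotics match Proposition \ref{prop:DL}) --- but you assemble them differently. The paper reduces Theorem \ref{th-exp-decay} to a resolvent bound for $\Pc_{a,b}$ on the whole wave guide (Theorem \ref{th-spectral-gap-Pc}), proved by expanding along the transverse Riesz basis and using the functional calculus of the Laplacian on $\R^{d-1}$ (Proposition \ref{prop:R}), and then invokes Gearhart--Pr\"uss. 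You instead fiber the semigroup by Fourier transform in $x$: since each fiber is $e^{-it\abs\xi^2}e^{-it\Tc_{a,b}}$ with a unimodular scalar factor, $\nr{e^{-it\Pc_{a,b}}}_{\Lc(\HH)}=\nr{e^{-it\Tc_{a,b}}}_{\Lc(\SSS)}$, and the transverse semigroup is estimated directly from the basis expansion, so Gearhart--Pr\"uss and the resolvent construction are bypassed entirely. This is a legitimate and arguably more elementary route to Theorem \ref{th-exp-decay} alone (you should still say a word on why the semigroup fibers, i.e.\ that $\Pc_{a,b}$ is unitarily equivalent to the direct integral of $\abs\xi^2+\Tc_{a,b}$, checked e.g.\ on a core or via the fibered resolvent); what it does not give you is the quantitative resolvent estimate of Theorem \ref{th-spectral-gap-Pc}(ii), which the paper obtains as a result of independent interest.

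Two points in your sketch should be made explicit. First, in the final estimate the eigenvalues are not all semisimple: for $a^2=4b^2$ every large eigenvalue has a Jordan block of size $2$ (Proposition \ref{prop:multiplicities-high-freq}), so on such a block $e^{-it\Tc_{a,b}}$ produces a factor $(1+t)e^{t\Im\l}$ rather than $e^{t\Im\l}$; since the block sizes and the chain couplings are uniformly bounded, this is absorbed by taking any $\g<\g_0$, but the bound $\abs{e^{-it\l_n}}\leq e^{-\g t}$ alone is not the whole argument. Second, your exclusion of real eigenvalues is slightly garbled: from $u(0)=0$ the second boundary condition gives $v'(0)=0$ (not $v(0)=0$); one must first solve the Neumann problem for $v$ to pin $\l=n^2\pi^2/\ell^2$, deduce $u\equiv 0$ from $u(0)=0$, $u'(\ell)=0$, and only then does the first boundary condition with $b\neq0$ force $v(0)=0$ and hence $v\equiv0$ --- this is exactly the paper's Proposition \ref{prop:Tc}.
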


The proof of Theorem \ref{th-exp-decay} is based on the spectral properties of the corresponding coupled Schr\"odinger operator. For $a,b \in \R$ we consider on $\HH = L^2(\O) \times L^2(\O) \simeq L^2(\O,\C^2)$ the operator 
\begin{equation} \label{def:Pc}
\Pc_{a,b} = \begin{pmatrix} -\D & 0 \\ 0 & -\D \end{pmatrix},
\end{equation}
defined on the subspace $\Dom(\Pc_{a,b})$ of functions $U = (u,v)$ in $H^2(\O;\C^2)$ such that, for all $x \in \R^{d-1}$,
\begin{equation} \label{dom-Pc}
\begin{cases}
\partial_\nu u(x,0) = ia u(x,0) + ibv(x,0),\\
\partial_\nu v(x,0) = - ib u(x,0),\\
\partial_\nu u (x,\ell) = \partial_\nu v(x,\ell) = 0.
\end{cases}
\end{equation}
We will check in Section \ref{sec-general-properties} that if $a \geq 0$ then $\Pc_{a,b}$ is a maximal dissipative operator on $\HH$, so by the usual Lummer-Phillips Theorem it generates a contractions semigroup $(e^{-it\Pc_{a,b}})_{t \geq 0}$ on $\HH$. Then, given $U_0 = (u_0,v_0) \in \Dom(\Pc_{a,b})$, we see that $U = (u,v)$ satisfies \eqref{system}-\eqref{system-CI} if and only if 
\begin{equation} \label{eq:Pc}
\begin{cases}
i\partial_t U - \Pc_{a,b} U = 0,\\
U|_{t = 0} = U_0.
\end{cases}
\end{equation}
Moreover, if $\HH$ is endowed with the natural norm, Theorem \ref{th-exp-decay} is equivalent to the uniform exponential decay in $\Lc(\HH)$ of the propagator $e^{-it\Pc_{a,b}}$ when $t \to +\infty$.

\begin{theorem} \label{th-exp-decay-Pc}
Let $a > 0$ and $b \in \R^*$. There exist $\gamma > 0$ and $C > 0$ such that for $t > 0$ we have 
\[
\nr{e^{-it\Pc_{a,b}}}_{\Lc(\HH)} \leq C e^{-\g t}.
\]
\end{theorem}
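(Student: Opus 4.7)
The plan is to prove uniform exponential decay of the semigroup $e^{-it\Pc_{a,b}}$ by reducing the problem to the spectral analysis of a one-dimensional transverse operator and then invoking the Gearhart--Prüss theorem. Since $\Pc_{a,b}$ is maximal dissipative (so $-i\Pc_{a,b}$ generates a contraction semigroup on the Hilbert space $\HH$), it suffices to establish (i) a spectral gap $\Sp(\Pc_{a,b}) \subset \{\Im z \leq -\gamma_0\}$ for some $\gamma_0 > 0$, and (ii) a uniform resolvent bound $\sup_{\Im z > -\gamma} \nr{(\Pc_{a,b} - z)^{-1}}_{\Lc(\HH)} < \infty$ for some $\gamma \in (0,\gamma_0)$.

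The first step is to exploit translation invariance in $x \in \R^{d-1}$. The partial Fourier transform in $x$ conjugates $\Pc_{a,b}$ to the direct integral whose fiber at $\xi \in \R^{d-1}$ is $T_{a,b} + |\xi|^2 \Id$, where $T_{a,b}$ is the coupled one-dimensional operator $-\partial_y^2 \oplus -\partial_y^2$ on $L^2(]0,\ell[;\C^2)$ with the boundary conditions inherited from \eqref{dom-Pc}. In particular
\[
\Sp(\Pc_{a,b}) = \Sp(T_{a,b}) + [0,\infty),
\]
so the imaginary part of the spectrum is completely determined by the one-dimensional problem, and any uniform resolvent bound for the shifted family $T_{a,b} + s$, $s \geq 0$, lifts at once to a bound for $\Pc_{a,b}$.

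The second step is the spectral analysis of $T_{a,b}$, which has compact resolvent. Looking for eigenfunctions of the form $u(y) = A \cos(k(y-\ell))$, $v(y) = B \cos(k(y-\ell))$ (so the Neumann conditions at $y = \ell$ hold automatically) and imposing the boundary conditions at $y = 0$ yields the characteristic equation
\[
k^2 \sin^2(k\ell) + i a k \sin(k\ell)\cos(k\ell) + b^2 \cos^2(k\ell) = 0.
\]
At $a = b = 0$ this gives the double eigenvalues $\lambda_n = (n\pi/\ell)^2$ of the Neumann Laplacian. For $a > 0$ and $b \neq 0$ each double eigenvalue splits into a pair $\lambda_n^\pm$ whose asymptotic expansion as $n \to \infty$ produces imaginary parts converging to two strictly negative constants determined by $a$ and $b$. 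Combined with a direct inspection of the finitely many low-frequency roots and with the exclusion of real eigenvalues via the dissipation inequality, this yields a uniform gap $\gamma_0 = \inf\{-\Im\lambda \st \lambda \in \Sp(T_{a,b})\} > 0$. To turn this into a resolvent estimate, I would show that the root vectors of $T_{a,b}$ form a Riesz basis of $L^2(]0,\ell[;\C^2)$ -- this is natural since $T_{a,b}$ is a boundary perturbation of a selfadjoint operator with simple and well-separated eigenvalue asymptotics, making the classical Bari--Markus / Keldysh perturbation machinery applicable. In such a basis $(T_{a,b} - z)^{-1} = \sum_n \pi_n / (\lambda_n - z)$ with $\sup_n \nr{\pi_n} < \infty$, which gives $\nr{(T_{a,b} - z)^{-1}} \leq M / \mathrm{dist}(z, \Sp(T_{a,b}))$. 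Translating $z \mapsto z - |\xi|^2$ and taking the supremum over $\xi$ produces the required uniform bound for $\Pc_{a,b}$.

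The main obstacle is the one-dimensional spectral work: proving that the imaginary parts of the whole sequence $(\lambda_n^\pm)$ remain uniformly negative (no asymptotic return to the real axis at high frequency) and that the associated spectral projectors are uniformly bounded, which are the two ingredients behind both the gap and the Riesz basis property. Once this is in place, Gearhart--Prüss applied to the generator $-i\Pc_{a,b}$ yields $\nr{e^{-it\Pc_{a,b}}}_{\Lc(\HH)} \leq C e^{-\gamma t}$ for any $\gamma \in (0,\gamma_0)$, which is exactly the conclusion of Theorem~\ref{th-exp-decay-Pc}.
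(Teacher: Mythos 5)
Your overall route --- Gearhart--Pr\"uss, partial Fourier transform in $x$ to reduce to the transverse operator, a spectral gap plus a Riesz basis of root vectors for $\Tc_{a,b}$, and a fiberwise resolvent bound lifted back to $\Pc_{a,b}$ --- is exactly the paper's (Sections~\ref{sec-eigenvalues} through~\ref{sec-resolvent}).

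There is, however, a genuine gap in the step where you write $(\Tc_{a,b}-z)^{-1} = \sum_n \pi_n/(\lambda_n - z)$ with uniformly bounded $\pi_n$ and conclude $\nr{(\Tc_{a,b}-z)^{-1}} \leq M/\mathrm{dist}(z,\s(\Tc_{a,b}))$. You claim that for $a>0$, $b\neq 0$ each double Neumann eigenvalue ``splits into a pair $\lambda_n^\pm$'' with distinct members; this fails precisely when $a^2 = 4b^2$, where $\mu_- = \mu_+$, the high-frequency eigenvalues of $\Tc_{a,b}$ remain algebraically double with geometric multiplicity $1$, and every large eigenvalue carries a nontrivial Jordan block (Propositions~\ref{prop:eq-z} and~\ref{prop:multiplicities-high-freq}). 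The resolvent then has a second-order pole at each such eigenvalue, the partial-fraction expansion must carry a nilpotent term $N_n/(\lambda_n - z)^2$, and the correct bound near the spectrum degrades to $C/\mathrm{dist}(z,\s(\Tc_{a,b}))^2$ --- which is why the paper's Theorem~\ref{th-spectral-gap-Pc} and Proposition~\ref{prop:R} state the estimate with the exponent $[m]$, and Proposition~\ref{prop:multiplicities} pins down $m$ (there is also a countable exceptional set $\Theta$ with a low-frequency Jordan block when $a^2 < 4b^2$). For the conclusion of Theorem~\ref{th-exp-decay-Pc} the imprecision is ultimately harmless, since Gearhart--Pr\"uss only needs a bound on $\{\Im z > -\gamma\}$ with $0<\gamma<\gamma_0$, where the distance to the spectrum is bounded below and the exponent is irrelevant; but as written your intermediate resolvent formula is false when $a^2 = 4b^2$, and the ``simple, well-separated eigenvalues'' justification you give for the Riesz basis does not directly apply because the unperturbed Neumann spectrum is everywhere double --- the paper instead supplies the almost-orthogonality argument of Section~\ref{sec-Riesz-basis}, which handles both the split and the unsplit cases with generalized eigenvectors.
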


By the Gearhart-Pr\"uss Theorem (see for instance Theorem V.1.11 in \cite{engel}), we get uniform exponential decay for $e^{-it\Pc_{a,b}}$ if we can prove that the resolvent of $\Pc_{a,b}$ is well defined and uniformly bounded in an open upper half-plane which contains the real axis. Thus, Theorem \ref{th-exp-decay-Pc} can be deduced from the following spectral result.\\

We denote by $\rho(\Pc_{a,b})$ and $\s(\Pc_{a,b})$ the resolvent set and the spectrum of $\Pc_{a,b}$, respectively.

\begin{theorem} \label{th-spectral-gap-Pc}
Let $a > 0$ and $b \in \R^*$.
\begin{enumerate} [\rm(i)]
\item There exists $\gamma_1 > 0$ such that any $z \in \C$ with $\Im(z) > -\g_1$ belongs to $\rho(\Pc_{a,b})$.
\item There exist $m\in \N^*$ and $C_1 > 0$ such that for $z \in \rho(\Pc_{a,b})$ we have 
\[
\nr{(\Pc_{a,b}-z)\inv}_{\Lc(\HH)} \leq \frac {C_1}{\mathsf{dist}(z,\s(\Pc_{a,b}))^{[m]}},
\]
where for $s > 0$ we have set $s^{[m]} = \min(s,s^m)$.
\end{enumerate}
\end{theorem}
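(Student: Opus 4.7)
The approach is to reduce the spectral analysis of $\Pc_{a,b}$ on the waveguide to that of a one-dimensional operator on the cross-section via a partial Fourier transform $\Fc_x$ in the variable $x\in\R^{d-1}$. Conjugation by $\Fc_x$ decomposes $\Pc_{a,b}$ as the direct integral $\int^\oplus_{\R^{d-1}}(\Ac_{a,b}+\abs{\x}^2)\,d\x$, where $\Ac_{a,b}$ is the one-dimensional analogue of $\Pc_{a,b}$ acting on $L^2(]0,\ell[;\C^2)$ via the same formulas \eqref{def:Pc}--\eqref{dom-Pc} but in dimension one. In particular
\[
\s(\Pc_{a,b}) = \overline{\s(\Ac_{a,b})+[0,+\infty)},\qquad \nr{(\Pc_{a,b}-z)\inv}_{\Lc(\HH)} = \sup_{\x\in\R^{d-1}}\nr{(\Ac_{a,b}-(z-\abs{\x}^2))\inv},
\]
so both items of Theorem \ref{th-spectral-gap-Pc} reduce to a detailed study of $\Ac_{a,b}$.

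The operator $\Ac_{a,b}$ is maximal dissipative with compact resolvent, so its spectrum is a discrete set of eigenvalues of finite algebraic multiplicity with $\Im\l\leq 0$. Solving $-\f''=\l\f$ with the Neumann condition at $y=\ell$ and substituting in the coupled boundary condition at $y=0$ reduces the eigenvalue problem to the characteristic equation
\[
\m^2\sin^2(\m\ell)+ia\,\m\sin(\m\ell)\cos(\m\ell)-b^2\cos^2(\m\ell)=0,\qquad \l=\m^2,
\]
which, viewed as a quadratic in $\m\tan(\m\ell)$, yields $\m\tan(\m\ell)=\frac 12(-ia\pm\sqrt{4b^2-a^2})$. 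A standard perturbative analysis near $\m=n\pi/\ell$ then gives two families of eigenvalues
\[
\l_n^{\pm}=\frac{n^2\pi^2}{\ell^2}+\frac{-ia\pm\sqrt{4b^2-a^2}}{\ell}+\Oc(n^{-2}).
\]
A direct integration by parts shows $\Im\innp{\Ac_{a,b}\f}{\f}=-a\abs{u(0)}^2$ with $\f=(u,v)$; a real eigenvalue would force $u(0)=0$, and the boundary conditions together with $b\neq 0$ would then reduce $v$ to a Neumann eigenfunction while still imposing $u'(0)=-ibv(0)\neq 0$, which is incompatible with the eigenvalue equation for $u$. Hence every eigenvalue has strictly negative imaginary part, and combining this with the asymptotic expansion produces $\g_1>0$ such that $\Im\l\leq -\g_1$ for all eigenvalues. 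Item (i) follows from the identification of $\s(\Pc_{a,b})$ above.

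For (ii), the central ingredient is the Riesz basis property for the family of (normalized) generalized eigenfunctions of $\Ac_{a,b}$. The plan is first to establish completeness of these generalized eigenfunctions via a Keldysh-type theorem (available here because $\Ac_{a,b}$ is a boundary-type relatively compact perturbation of the self-adjoint Neumann Laplacian, whose resolvent is of finite order), and then, using that the eigenfunctions are asymptotically close to explicit two-component vectors built from $\cos(n\pi y/\ell)$ (the coefficients being dictated by the characteristic equation), to apply a Bari-type quadratic closeness criterion and upgrade completeness to a Riesz basis. Once this basis is in hand, the resolvent of $\Ac_{a,b}$ decomposes as a convergent series of rank-one projections on the finite-dimensional generalized eigenspaces; since eigenvalues are simple for $n$ large, only finitely many Jordan blocks occur, and taking $m$ to be their maximal size produces a uniform bound
\[
\nr{(\Ac_{a,b}-w)\inv}\leq \frac{C}{\mathsf{dist}(w,\s(\Ac_{a,b}))^{[m]}}.
\]
Taking the supremum over $\x$ and noting that $\inf_{t\geq 0}\mathsf{dist}(z-t,\s(\Ac_{a,b}))=\mathsf{dist}(z,\s(\Pc_{a,b}))$ then yields (ii).

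The principal obstacle is the Riesz basis step: while completeness of the generalized eigenfunctions and the eigenvalue asymptotics are by now classical, upgrading to a Riesz basis requires simultaneous quantitative control of the two sequences $(\l_n^{\pm})$ and of the associated eigenfunctions uniformly in $n$, together with a careful handling of the finitely many possible Jordan blocks at low frequency. This is where the bulk of the technical work will lie.
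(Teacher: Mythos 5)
Your overall strategy is essentially the paper's: reduce everything by separation of variables to the transverse operator $\Tc_{a,b}$ on $]0,\ell[$, get the spectral gap (item (i)) from the characteristic equation, the eigenvalue asymptotics and the dissipativity/no-real-eigenvalue argument, and get the uniform resolvent bound (item (ii)) from a Riesz basis of generalized eigenfunctions, then lift to $\O$. The two places where you genuinely deviate are methodological. First, you lift via the partial Fourier transform and the fibered identity $\nr{(\Pc_{a,b}-z)\inv}=\mathrm{ess\,sup}_{\xi}\nr{(\Tc_{a,b}-(z-\abs\xi^2))\inv}$, whereas the paper (Proposition \ref{prop:R}) writes $(\Pc_{a,b}-\z)\inv$ as an explicit series over the Riesz basis with coefficients $(L-(\z-\l_k))^{-1-p}$ controlled by the spectral theorem for the Laplacian $L$ on $\R^{d-1}$; these are equivalent, and your version only needs the routine verification that uniform invertibility of the fibers characterizes $\rho(\Pc_{a,b})$. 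Second, for the Riesz basis you propose Keldysh completeness plus a Bari quadratic-closeness upgrade, while the paper avoids Keldysh altogether: it proves almost-orthogonality of the high-frequency generalized eigenspaces (Proposition \ref{prop:innp-SSSn}), deduces injectivity of the synthesis map from the Riesz projections, and gets surjectivity by a Fredholm argument against the explicit Neumann reference basis (Proposition \ref{prop:Riesz-basis}). Your route is viable, but note that the perturbation here lives in the boundary condition, so Keldysh must be applied, e.g., to the finite-rank resolvent difference with the Neumann operator; and the Bari step needs the counting fact that the total algebraic multiplicity of the low-frequency eigenvalues equals $2n_0$, matching the reference basis (the paper gets this by Kato analytic perturbation along $s\mapsto\Tc_{sa,sb}$ in Proposition \ref{prop:multiplicities-high-freq}); completeness plus closeness of the tail alone does not settle this.

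One claim in your plan is wrong as stated: ``since eigenvalues are simple for $n$ large, only finitely many Jordan blocks occur.'' This fails exactly when $a^2=4b^2$, which the hypotheses $a>0$, $b\in\R^*$ allow: there every high-frequency eigenvalue has algebraic multiplicity $2$ and geometric multiplicity $1$, so there are infinitely many Jordan blocks of size $2$. The conclusion survives because the block size is uniformly bounded (this is the content of Theorem \ref{th-spectrum-Tc}\,(i) and is what allows $m=2$), but then both your resolvent expansion (the blocks are not rank-one projections) and your quadratic-closeness comparison must be carried out with genuinely generalized eigenvectors, uniformly in $n$ --- this is precisely the role of the vectors $A_1\tilde e_{z_n}+A_{2,n}e_{z_n}$ and of the bound \eqref{eq:norm-etildez} in the paper. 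Also, the remainder in your expansion of $\l_n^{\pm}$ should be $\Oc(n^{-1})$ rather than $\Oc(n^{-2})$ (it is the square roots $z_{n,\pm}$ that carry the $\Oc(n^{-2})$ error), though nothing in the argument depends on this.
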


As usual in a wave guide, we use separation of variables and deduce the spectral properties of $\Pc_{a,b}$ on $\O$ from the analogous properties for the corresponding operators on $\R^{d-1}$ and on the cross section $]0,\ell[$. We will split the Laplacian $\Pc_{a,b}$ on $\O$ as the sum of the usual Laplace operator on the first $(d-1)$ variables, and a Laplacian with boundary conditions on $]0,\ell[$.\\

The damping and the coupling are encoded in the domain of the transverse operator. For $a,b \in \R$ we consider on $\SSS = L^2(0,\ell;\C^2)$ the operator 
\begin{equation} \label{def:Tc}
\Tc_{a,b} = \begin{pmatrix} -\partial^2 & 0 \\ 0 & -\partial^2 \end{pmatrix},
\end{equation}
defined on the domain 
\begin{equation} \label{dom-Tc}
\Dom(\Tc_{a,b}) = \set{U \in H^2(0,\ell;\C^2) \, : \, U'(0) + i M_{a,b} U(0) = 0, \quad U'(\ell) = 0},
\end{equation}
where we have set
\begin{equation} \label{def:M}
M_{a,b} = \begin{pmatrix} a & b \\ -b & 0 \end{pmatrix}.
\end{equation}

\begin{theorem} \label{th-spectrum-Tc}
Let $a > 0$ and $b \in \R^*$. The spectrum of $\Tc_{a,b}$ consists of a sequence $(\l_k)_{k \in \N^*}$ of eigenvalues. Moreover,
\begin{enumerate}[\rm(i)]
\item there exists $m \in \N^*$ such that all the eigenvalues have algebraic multplicity smaller than or equal to $m$,
\item there exists $\g_1 > 0$ such that $\Im(\l_k) \leq -\g_1$ for all $k \in \N$,
\item there exists a Riesz basis of $\SSS$ which consists of generalized eigenfunctions of $\Tc_{a,b}$.
\end{enumerate}
\end{theorem}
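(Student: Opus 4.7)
The plan is to first check that $\Tc_{a,b}$ has compact resolvent on $\SSS$, which is straightforward since $\Dom(\Tc_{a,b}) \subset H^2(0,\ell;\C^2)$ is compactly embedded in $\SSS$ by Rellich's theorem. This already yields a discrete spectrum of isolated eigenvalues with finite algebraic multiplicity, accumulating only at infinity, giving the sequence $(\l_k)_{k \in \N^*}$. Next I would parameterize the eigenvalues explicitly: the condition $U'(\ell) = 0$ forces any $H^2$-solution of $-U'' = \l U$ to have the form $U(y) = \cos(z(\ell-y))\x$ with $z^2 = \l$ and $\x \in \C^2$, while the boundary condition at $0$ reduces to $\bigl[z\sin(z\ell)\,I + i\cos(z\ell)\,M_{a,b}\bigr]\x = 0$. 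Thus $\l = z^2 \in \s(\Tc_{a,b})$ iff $iz\tan(z\ell)$ belongs to $\s(M_{a,b}) = \{\nu_+, \nu_-\}$ with $\nu_\pm = (a \pm \sqrt{a^2-4b^2})/2$.

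For (ii), I would use Green's identity with the boundary conditions to obtain, for any eigenfunction $U = (U_1, U_2)$, $\l \nr{U}_\SSS^2 = \nr{U'}_\SSS^2 - i\innp{M_{a,b}U(0)}{U(0)}_{\C^2}$. A direct computation gives $\innp{M_{a,b}\x}{\x}_{\C^2} = a\abs{\x_1}^2 + 2ib\,\Im(\x_2\overline{\x_1})$, and taking imaginary parts yields $\Im(\l)\nr{U}_\SSS^2 = -a\abs{U_1(0)}^2 \leq 0$. If $U_1(0) = 0$, the boundary conditions force $U_2$ to be a nontrivial Neumann eigenfunction on $(0,\ell)$ while $U_1$ must vanish identically, contradicting $U_1'(0) = -ibU_2(0) \neq 0$. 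Hence $\Im(\l_k) < 0$ strictly. An asymptotic analysis of $iz\tan(z\ell) = \nu_\pm$ for large $|z|$ yields $z_n^{(\pm)} = n\pi/\ell + O(1/n)$ and $\l_n^{(\pm)} = (n\pi/\ell)^2 - 2i\nu_\pm/\ell + O(n^{-2})$, so $\Im(\l_n^{(\pm)})$ tends to a strictly negative constant. Combined with the strict inequality and the discreteness of the spectrum, this gives the uniform spectral gap $\g_1 > 0$ in (ii). The same asymptotics show that the zeros of the characteristic function are simple for $n$ large, so only finitely many eigenvalues can have algebraic multiplicity greater than one, yielding the uniform bound $m$ of (i).

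For (iii), the approach is a Bari-Markus type argument. When $a^2 \neq 4b^2$, the normalized eigenfunctions $U_n^{(\pm)}(y) \propto \cos(z_n^{(\pm)}(\ell-y))\x^{(\pm)}$, with $\x^{(\pm)}$ an eigenvector of $M_{a,b}$ for $\nu_\pm$, are asymptotically close to $e_n(y)\x^{(\pm)}$, where $e_n(y) = \sqrt{2/\ell}\cos(n\pi y/\ell)$: from $z_n^{(\pm)} = n\pi/\ell + O(1/n)$ one obtains $\nr{U_n^{(\pm)} - e_n\x^{(\pm)}}_\SSS = O(1/n)$, so the two systems are quadratically close. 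Since $\{e_n\x^{(\pm)}\}$ is a Riesz basis of $\SSS$ (image of an orthonormal basis under a bounded invertible map induced by $\{\x^{(+)},\x^{(-)}\}$ in $\C^2$), the Bari-Markus theorem reduces the problem to proving completeness (or $\omega$-linear independence) of the generalized eigenfunctions of $\Tc_{a,b}$; I would derive this either from a Keldysh-type theorem, viewing $\Tc_{a,b}$ as a boundary perturbation of the self-adjoint $\Tc_{0,0}$, or directly from a contour-integral representation of the resolvent using the explicit characteristic function.

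The main obstacle I foresee is the degenerate case $a^2 = 4b^2$, where $\nu_+ = \nu_-$ and $M_{a,b}$ has a single Jordan block. Here the two families of eigenvalues coalesce asymptotically, $\x^{(+)}$ and $\x^{(-)}$ collapse to a single direction, the comparison system must incorporate a generalized eigenvector of $M_{a,b}$, and the asymptotic expansion for $z_n$ acquires a correction. Checking that quadratic closeness, completeness, and the Bari-Markus conclusion all survive in this degenerate configuration — together with establishing completeness of generalized eigenfunctions in a way uniform across the parameter range — seems to be the most delicate part of the argument.
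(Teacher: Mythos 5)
Your proposal is correct and follows essentially the same strategy as the paper: your equation $iz\tan(z\ell)\in\sigma(M_{a,b})$ is an equivalent reformulation of the paper's $\phi_\pm(z)=0$, the spectral gap is obtained from strict dissipativity plus the high-frequency asymptotics exactly as you describe (although the error term in $\lambda_{n,\pm}$ is $O(n^{-1})$, not $O(n^{-2})$ -- harmless, since only the limit of the imaginary parts is used), and the Riesz basis is obtained by showing the generalized eigenfunctions are quadratically close to the explicit model Riesz basis built from $e_{n\nu}$ tensored with a basis of $\C^2$. The degenerate case $a^2=4b^2$ is handled precisely as you anticipate, by inserting a generalized eigenvector of $M_{a,b}$ (the paper's $A_{2,\infty}$) into the comparison family, and -- this addresses your main worry -- the paper sidesteps Keldysh and contour arguments entirely: it proves $\omega$-linear independence of $(\Phi_k)$ directly from the Riesz spectral projections $\Pi_n$, and it gets surjectivity of the synthesis map $\Th$ by noting that $\Th\Th_0^{-1}-\Id_\SSS$ is compact (since $\sum_k\nr{\Phi_k-\Phi_k^0}_\SSS^2<\infty$), so $\Th\Th_0^{-1}$ is Fredholm of index zero and injectivity yields surjectivity.
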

 
Here we are mostly interested in the dissipative case, but we will see in Section \ref{sec-general-properties} that the adjoint of $\Tc_{a,b}$ is $\Tc_{-a,b}$, so $\Tc_{a,b}$ is selfadjoint if $a = 0$ (then the eigenvalues are real and there exists an orthonormal basis of eigenfunctions), and for $a < 0$ the properties are the same as for $a > 0$, except that the eigenvalues have positive imaginary parts. To be complete we will also include in the intermediate results the already understood case $a > 0$, $b = 0$. This corresponds to two independant equations, one with Neumann boundary conditions and one with damping at the boundary (see \cite{royer-diss-schrodinger-guide}).\\

Theorem \ref{th-spectrum-Tc} is what we need on the transverse operator to prove Theorem \ref{th-spectral-gap-Pc}, but we can give more precise spectral properties for $\Tc_{a,b}$. For example, we will see that $\Tc_{a,b}$ satisfies a Weyl Law. If $P_0$ is the usual Dirichlet Laplacian on a bounded domain $\O_0$ of $\R^d$, then the standard Weyl Law says that the number $N_0(r)$ of eigenvalues of $P_0$ (counted with multiplicities) not greater than $r$ satisfies 
\[
N_0(r) \mathop{\sim}\limits_{r \to +\infty} \frac {r^{\frac d 2} \o_d \abs{\O_0}}{(2\pi)^d},
\]
where $\o_d$ is the volume of the unit ball in $\R^d$ and $\abs{\O_0}$ is the volume of the domain $\O_0$. This result has been improved and extended in many directions.\\

When $a=b=0$ we have two decoupled Schr\"odinger equations with Neumann boundary conditions, so the eigenvalues of $\Tc_0 = \Tc_{0,0}$ are the $n^2 \pi^2 / \ell^2$, $n \in \N$, and these eigenvalues have multiplicity 2. It is easy to deduce that the number of eigenvalues of $\Tc_0$ (counted with multiplicities) smaller than $r > 0$ is of the form $2\ell \sqrt r / \pi + \Oc(1)$.\\

For $r > 0$ we denote by $N_{a,b}(r)$ the number of eigenvalues of $\Tc_{a,b}$ (counted with multiplicities) with real part smaller than $r$.

\begin{theorem}[Weyl Law] \label{th:Weyl}
Let $a,b \in \R$. We have 
\[
N_{a,b}(r) = \frac {2\ell \sqrt{r}}{\pi} + \bigo r {+\infty} (1).
\]
\end{theorem}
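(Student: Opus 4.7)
The plan is to derive an explicit transcendental characteristic equation for the eigenvalues of $\Tc_{a,b}$ and then count its zeros asymptotically via Rouch\'e's theorem. Setting $\lambda = z^2$, the equations $-U_j'' = \lambda U_j$ on $(0,\ell)$ with Neumann boundary at $y = \ell$ force $U_j(y) = C_j \cos(z(y-\ell))$. Imposing the boundary condition at $y = 0$ yields the linear system $\big(z\sin(z\ell)\,\Id + i\cos(z\ell)\, M_{a,b}\big)(C_1,C_2)^\intercal = 0$, which admits a non-trivial solution iff
\[
F(z) \;:=\; (z\sin(z\ell))^2 \;+\; ia\, z\sin(z\ell)\cos(z\ell) \;-\; b^2\cos^2(z\ell) \;=\; 0.
\]
By the standard theory of linear ODE boundary problems, the algebraic multiplicity of $\lambda$ as an eigenvalue of $\Tc_{a,b}$ equals the order of $\lambda$ as a zero of $F$ viewed as a function of $\lambda$. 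Treating $F$ as a quadratic in $z\sin(z\ell)$ with coefficient $\cos(z\ell)$, it factors as $F = g_+ g_-$ with
\[
g_\pm(z) \;=\; z\sin(z\ell) - \mu_\pm\cos(z\ell), \qquad \mu_\pm \;=\; \tfrac12\big(-ia \pm \sqrt{4b^2 - a^2}\big).
\]

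The next step is to locate the zeros of each $g_\pm$. First, since $|\tan(x+iy)| \to 1$ as $|y| \to \infty$, the equation $z\tan(z\ell) = \mu_\pm$ forces $|\Im z| \leq T$ for some $T$ uniform in the zeros of $g_\pm$. Next, in the small disc $D(k\pi/\ell,\varepsilon)$ with $k \geq k_0$ large, the bound $|z\sin(z\ell)| \gtrsim k$ on the boundary dominates $|\mu_\pm \cos(z\ell)| = \Oc(1)$, so Rouch\'e gives a unique zero $z_{k,\pm}$ of $g_\pm$ in that disc. Inverting $z\tan(z\ell) = \mu_\pm$ near $z_k = k\pi/\ell$ through $\tan(z\ell) = (z - z_k)\ell + \Oc((z - z_k)^3)$ produces the asymptotics
\[
z_{k,\pm} \;=\; \frac{k\pi}{\ell} + \frac{\mu_\pm}{k\pi} + \Oc(k^{-2}), \qquad \lambda_{k,\pm} \;=\; \frac{k^2\pi^2}{\ell^2} + \frac{2\mu_\pm}{\ell} + \Oc(k^{-1}).
\]
A global Rouch\'e argument on the rectangle $R_N = [-(N+\tfrac12)\pi/\ell, (N+\tfrac12)\pi/\ell] \times [-T,T]$ compared with $z \mapsto z\sin(z\ell)$, using $|\sin(x+iy)|^2 = \sin^2 x + \sinh^2 y$ and the analogous identity for cosine to check dominance on $\partial R_N$, shows that for $N$ large each $g_\pm$ has exactly $2N+2$ zeros in $R_N$. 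These account for the $z_{k,\pm}$ with $|k| \leq N$, their reflections $-z_{k,\pm}$, and only a finite number of ``exceptional'' zeros at the bottom of the spectrum.

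To conclude, the eigenvalues of $\Tc_{a,b}$ (with algebraic multiplicity) are the $\lambda_{k,\pm}$ for $k \in \N^*$ together with finitely many exceptional ones that contribute $\Oc(1)$. Since $\Re(\lambda_{k,\pm}) \leq r$ is equivalent to $k \leq \ell\sqrt{r}/\pi + \Oc(1)$, summing over the two branches produces $N_{a,b}(r) = 2\ell\sqrt{r}/\pi + \Oc(1)$. The main subtlety will be the degenerate regime $a^2 = 4b^2$: there $\mu_+ = \mu_-$ and $F = g_+^2$, so each zero of $g_+$ produces an eigenvalue of algebraic multiplicity $2$, but the total count is preserved. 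When $b = 0$ the system decouples into a Neumann Laplacian (whose spectrum $k^2\pi^2/\ell^2$ has the expected counting function) and the singly damped operator of \cite{royer-diss-schrodinger-guide}, and the same Weyl law applies. Verifying the uniform Rouch\'e estimate on $\partial R_N$ is the most technical --- though essentially routine --- step.
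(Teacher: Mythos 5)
Your proposal is correct and lands on the same characteristic function as the paper, but it differs in the key counting step. Your $F(z)=z^2\sin^2(z\ell)+iaz\sin(z\ell)\cos(z\ell)-b^2\cos^2(z\ell)$ is precisely the paper's $\phi_-\phi_+$ up to the nonvanishing factor $4e^{2iz\ell}$ (and your $g_\pm$ is $\phi_\pm$ up to the same factor, with a relabelling $\mu_\pm^{\mathrm{yours}}=-i\mu_\mp^{\mathrm{paper}}$), and the local Rouch\'e step in the discs $D(k\pi/\ell,\varepsilon)$ is essentially Proposition~\ref{prop:Sigma-disks}. Where you diverge is in how the count of zeros of $F$ is converted into a count of eigenvalues with algebraic multiplicity: the paper sidesteps this entirely by invoking Kato's analytic perturbation theory for the type-B family $\Tc_{sa,sb}$, transporting the known multiplicity of the decoupled Neumann operator $\Tc_{0,0}$ into each disc $D(n^2\nu^2,n\nu^2/2)$ and into $D(0,R^2)$ along circles that stay in the resolvent set uniformly in $s$; you instead rely on the assertion that the algebraic multiplicity of an eigenvalue of $\Tc_{a,b}$ equals the order of the corresponding zero of the characteristic determinant $F$ in $\lambda$, together with a global Rouch\'e estimate on large rectangles $R_N$. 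Both routes are valid and yield the same conclusion. However, you should not wave off the multiplicity identification as ``standard'': the Naimark-type theorem relating zero order of the characteristic determinant to algebraic multiplicity is classical for scalar $n$-th order equations, but for $2\times2$ systems with matrix-coupled boundary conditions it deserves either a citation (e.g.\ Mennicken--M\"oller) or a short justification via the trace of the Riesz projector and the logarithmic residue of $F$. Your global Rouch\'e on $R_N$ is a nice alternative to the per-disc Kato argument, and the dominance estimates you sketch (using $|\sin(x+iy)|^2=\sin^2 x+\sinh^2 y$ and choosing $T$ with $T\tanh(T\ell)>|\mu_\pm|$) do go through; also note that $g_\pm$ is an even function of $z$, which is what guarantees the $\pm z$ symmetry of the zero set that you use when passing from the $z$-count to the $\lambda$-count.
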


In Theorem \ref{th-spectrum-Tc} we have said that the sequence of multiplicities of the eigenvalues of $\Tc_{a,b}$ is bounded. In fact, the maximum multiplicity $m$ given there is the parameter $m$ which appears in Theorem \ref{th-spectral-gap-Pc}. We can be more precise about these multiplicities.

\begin{proposition} \label{prop:multiplicities}
Let $a,b \in \R$.
\begin{enumerate}[\rm(i)]
\item All the eigenvalues of $\Tc_{0,0}$ have geometric and algebraic multiplicities 2.
\item If $(a,b) \neq (0,0)$ then all the eigenvalues of $\Tc_{a,b}$ are geometrically simple.
\item If $a^2 > 4b^2$ then all the eigenvalues of $\Tc_{a,b}$ are algebraically simple.
\item If $a^2 = 4b^2 \neq 0$ then all the eigenvalues of $\Tc_{a,b}$ have algebraic multiplicity 2.
\item If $a^2 < 4b^2$ then the eigenvalues of $\Tc_{a,b}$ can have algebraic multiplicity 1 or 2. More precisely, there exists a countably infinite subset $\Theta$ in $\R^2$ such that all the eigenvalues of $\Tc_{a,b}$ are simple if and only if $(a,b) \notin \Theta$. If $(a,b) \in \O \setminus \{(0,0)\}$ then $\Tc_{a,b}$ has one eigenvalue of algebraic multiplicity 2, and all the others are simple.
\end{enumerate}
In particular, in Theorem \ref{th-spectral-gap-Pc} we can choose $m=1$ if $a^2 > 4b^2$ or $a^2 < b^2$ and $(a,b) \notin \Theta$, and $m= 2$ otherwise.
\end{proposition}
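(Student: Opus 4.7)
The plan is to reduce everything to a scalar characteristic function attached to the $2\times 2$ matrix $M_{a,b}$. Seeking eigenfunctions of $\Tc_{a,b}$ in the form $U(y)=A\cos(\mu(\ell-y))$ with $A\in\C^2$ and $\lambda=\mu^2$ automatically satisfies $U'(\ell)=0$; the remaining boundary condition $U'(0)+iM_{a,b}U(0)=0$ becomes $(\mu\sin(\mu\ell)I+i\cos(\mu\ell)M_{a,b})A=0$. When $\cos(\mu\ell)\neq 0$ this forces $A$ to be an eigenvector of $M_{a,b}$ with eigenvalue $i\mu\tan(\mu\ell)$, while $\cos(\mu\ell)=0$ yields no eigenvalue and $\mu=0$ leaves the eigenspace $\ker M_{a,b}$. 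Using $\det(\alpha I+\beta M_{a,b})=\alpha^2+a\alpha\beta+b^2\beta^2$, the characteristic determinant factors as
\[
F(\mu)=\mu^2\sin^2(\mu\ell)+ia\mu\sin(\mu\ell)\cos(\mu\ell)-b^2\cos^2(\mu\ell)=h_+(\mu)h_-(\mu),
\]
with $h_\pm(\mu)=\mu\sin(\mu\ell)+i\beta_\pm\cos(\mu\ell)$ and $\beta_\pm=(a\pm\sqrt{a^2-4b^2})/2$ the eigenvalues of $M_{a,b}$. Since $F$ is even in $\mu$, the function $\tilde F(\lambda)=F(\sqrt\lambda)$ is entire, and a Riesz projector argument for the holomorphic family $\Tc_{a,b}-\lambda$ identifies its order of vanishing at $\lambda_0$ with the algebraic multiplicity of $\lambda_0$.

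Items $(i)$ and $(ii)$ follow immediately: if $(a,b)=(0,0)$ then $M_{a,b}=0$ so every eigenspace is $\C^2$, and $F=h_+^2$ doubles every simple zero of $h_+$; if $(a,b)\neq(0,0)$ then $M_{a,b}$ is not scalar, so each of its eigenspaces is one-dimensional. For the algebraic multiplicities, differentiating yields, at any nonzero zero $\mu_0$ of $h_\pm$,
\[
h_\pm'(\mu_0)=\frac{\ell\cos(\mu_0\ell)}{\mu_0}\bigl(\mu_0^2-\beta_\pm^2-i\beta_\pm/\ell\bigr),
\]
so a multiple zero forces $\lambda_0=\beta_\pm^2+i\beta_\pm/\ell$. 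If $a^2\geq 4b^2$ (which already forces $a\neq 0$), then $\beta_\pm$ are real with the sign of $a$, so $\Im(\lambda_0)=\beta_\pm/\ell$ has the sign of $a$. But the dissipativity identity $\Im\langle\Tc_{a,b}U,U\rangle_{\SSS}=-a|u(0)|^2$ forces $\Im(\lambda)$ to have the opposite sign of $a$ at every eigenvalue, a contradiction. Moreover $h_+$ and $h_-$ share a zero only when $\beta_+=\beta_-$. This establishes $(iii)$ (if $a^2>4b^2$, all zeros of $F$ are simple) and $(iv)$ (if $a^2=4b^2\neq 0$, then $h_+=h_-$ has simple zeros, so $F=h_+^2$ has every zero of multiplicity exactly $2$).

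For $(v)$, $\beta_-=\overline{\beta_+}$ is nonreal so $h_+$ and $h_-$ still have disjoint zero sets. A multiple zero of $h_\pm$ at $\mu_0$ requires both $\mu_0\tan(\mu_0\ell)=-i\beta_\pm$ and $\mu_0^2=\beta_\pm^2+i\beta_\pm/\ell$; substituting $\beta_\pm=i\mu_0\tan(\mu_0\ell)$ and using $1+\tan^2=\sec^2$ simplifies these to the single transcendental equation $\sin(2\mu_0\ell)+2\mu_0\ell=0$. The entire function $z\mapsto\sin z+z$ has $0$ as its only real zero and a countably infinite zero set $\Xi\subset\C$. Each $\zeta\in\Xi\setminus\{0\}$ yields a nonreal $\beta=\frac{i\zeta}{2\ell}\tan(\zeta/2)$ and hence the real pair $(a,b)=(2\Re\beta,\pm|\beta|)$, producing the countable set $\Theta\subset\R^2$; for $(a,b)\notin\Theta$ all zeros of $F$ are simple. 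For $(a,b)\in\Theta\setminus\{(0,0)\}$, the value $\beta_+$ determines $\mu_0$ up to sign, hence a single candidate double eigenvalue $\lambda_0=\mu_0^2$; conjugating the double-zero condition turns $\mu_0\tan(\mu_0\ell)=-i\beta_+$ into $\bar\mu_0\tan(\bar\mu_0\ell)=+i\beta_-$ rather than $-i\beta_-$, so the factor $h_-$ retains only simple zeros and the double eigenvalue is unique. The final clause on the parameter $m$ in Theorem \ref{th-spectral-gap-Pc} is then read off from the maxima of algebraic multiplicities in $(i)$--$(v)$. The main obstacle is the clean identification of algebraic multiplicity with the order of vanishing of $\tilde F$, which requires a careful treatment of the holomorphic operator family $\Tc_{a,b}-\lambda$ via Riesz projectors; once this identification is in place, the remaining arguments are elementary case-by-case computations.
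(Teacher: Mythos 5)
Your reduction to the scalar characteristic functions is essentially the paper's: your $h_\pm$ coincide with the paper's $\phi_\pm$ up to the nonvanishing factor $2ie^{iz\ell}$, your double-zero condition $\mu_0^2=\beta_\pm^2+i\beta_\pm/\ell$ is the paper's condition $\eta_\pm(z)=0$, and the transcendental equation $\sin(2\mu_0\ell)+2\mu_0\ell=0$ and the parametrization of $\Theta$ are identical to \eqref{eq:sin-z} and the construction of $(a_k,b_k)$. The difference is that everything in your argument hangs on the identification of the algebraic multiplicity of an eigenvalue with the order of vanishing of $\tilde F$, which you assert via ``a Riesz projector argument'' but do not prove; this is exactly where the paper does its real work, by computing $\ker\big((\Tc_{a,b}-z^2)^2\big)$ explicitly in Proposition \ref{prop:eq-z} (non-semisimplicity $\Leftrightarrow$ $\eta_\pm(z)=0$ $\Leftrightarrow$ $\phi_\pm'(z)=0$) and then pinning the multiplicities down exactly through the Kato-analytic homotopy $s\mapsto\Tc_{sa,sb}$ and Rouch\'e/Riesz-projector counting (Propositions \ref{prop:multiplicities-high-freq} and \ref{prop:number-zeros-Cn}). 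Without that lemma (or the explicit generalized kernels) you have not shown anywhere that multiplicities never exceed $2$; in particular nothing in your text excludes a zero of $h_-$ of order $\ge 3$ at the exceptional parameters. This is repairable in one line ($h_\pm''(\mu_0)=2\ell\cos(\mu_0\ell)\neq 0$ at any double zero), but it is absent, and the determinant-multiplicity lemma itself is a genuinely nontrivial missing ingredient.

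The second concrete gap is in item (v): the uniqueness argument is not valid as written. Conjugating the $h_+$ double-zero condition only shows that the particular point $\bar\mu_0$ is not a zero of $h_-$; it does not exclude $h_-$ having its own double zero at the different candidate point $\mu_1$ with $\mu_1^2=\beta_-^2+i\beta_-/\ell$, so you have neither ruled out two distinct double eigenvalues (one per factor) nor identified which factor carries the degeneracy. The correct argument is the same dissipativity sign computation you used for (iii)--(iv), now with complex $\beta_+$: for $a>0$ one has $\Im\big(\beta_+^2+i\beta_+/\ell\big)=\tfrac{a\sqrt{4b^2-a^2}}{2}+\tfrac{a}{2\ell}>0$, incompatible with $\Im\l\le 0$ at eigenvalues of the dissipative operator, so only $h_-$ can have a multiple zero (this is the paper's inequality \eqref{eq:mu-plus}); the case $a<0$ follows by adjoints and $a=0$ by selfadjointness. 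Smaller loose ends: justify that $\sin z+z$ has infinitely many zeros and that each nonzero zero yields a nonreal $\beta$, hence a pair $(a,b)$ genuinely satisfying $a^2<4b^2$ (the paper constructs these zeros explicitly via \eqref{eq:system-ch-sin}), and treat the eigenvalue $\l=0$ when $b=0$, where the vanishing orders in $\mu$ and in $\l$ differ.
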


Finally, we notice that we can see our system of two equations on a line segment as a problem on a graph with two edges and non-standard non-selfadjoint conditions at the common vertex (if we have coupling at both ends, then this gives a graph with two edges which have the same ends). Little is known for general non-selfadjoint quantum graphs (see \cite{Hussein14,HusseinKreSie15} for general properties of non-selfadjoint quantum graphs). We also refer to \cite{RiviereRoy20} for a non-selfadjoint Robin Laplacian on a star-shapped graph. In terms of non-selfadjoint quantum graphs, our analysis concerns a very particular example but provides much more precise spectral properties.

\subsection*{Organization of the paper} After this introduction, we give in Section \ref{sec-general-properties} the basic properties for the operator $\Pc_{a,b}$ on $\O$ and for the transverse operator $\Tc_{a,b}$ on $]0,\ell[$. We discuss in Section \ref{sec-eigenvalues} the localization of the large eigenvalues of $\Tc_{a,b}$. This will give in particular the first two statements of Theorem \ref{th-spectrum-Tc}, Theorem \ref{th:Weyl} and Proposition \ref{prop:multiplicities} for large eigenvalues. In Section \ref{sec-Riesz-basis} we finish the proof of Theorem \ref{th-spectrum-Tc} by proving the Riesz basis property and, in Section \ref{sec-resolvent}, we prove Theorem \ref{th-spectral-gap-Pc} from which Theorem \ref{th-exp-decay-Pc} and hence Theorem \ref{th-exp-decay} follow. Finally, we give in Section \ref{sec-eigenvalues-2} more results about the eigenvalues of the transverse operator, in particular about low frequencies.

\section{General properties of the coupled Schr\"odinger operators} \label{sec-general-properties}

In this section we give the basic properties of the operators $\Pc_{a,b}$ and $\Tc_{a,b}$ defined by \eqref{def:Pc}-\eqref{dom-Pc} and \eqref{def:Tc}-\eqref{dom-Tc}, respectively. We will use the following version of the Trace Theorem.

\begin{lemma} \label{lem:trace}
Let $\e > 0$. There exists $C > 0$ such that for all $u \in H^1(0,\ell)$ we have 
\[
\nr{u}_{L^\infty(0,\ell)}^2 \leq \e \nr{u'}_{L^2(0,\ell)}^2 + C \nr{u}_{L^2(0,\ell)}^2. 
\]
\end{lemma}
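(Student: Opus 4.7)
The plan is to use the one-dimensional Sobolev embedding through a direct argument based on the fundamental theorem of calculus, together with Young's inequality to produce the small parameter $\e$ in front of the derivative term.

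First, I would observe that for $u \in H^1(0,\ell)$ the function $s \mapsto |u(s)|^2$ is absolutely continuous with derivative $2\Re(\overline{u(s)} u'(s))$. Hence for any $x,y \in [0,\ell]$,
\[
|u(x)|^2 = |u(y)|^2 + 2 \Re \int_y^x \overline{u(s)} u'(s)\, ds \leq |u(y)|^2 + 2 \int_0^\ell |u(s)||u'(s)|\, ds.
\]
Integrating this inequality over $y \in [0,\ell]$ (the left-hand side does not depend on $y$) gives
\[
\ell |u(x)|^2 \leq \nr{u}_{L^2(0,\ell)}^2 + 2\ell \int_0^\ell |u(s)||u'(s)|\, ds.
\]

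Next I would apply Young's inequality: for any $\delta > 0$,
\[
2 |u(s)||u'(s)| \leq \delta |u'(s)|^2 + \frac{1}{\delta} |u(s)|^2.
\]
Inserting this above, dividing through by $\ell$, and taking the supremum over $x \in [0,\ell]$ yields
\[
\nr{u}_{L^\infty(0,\ell)}^2 \leq \delta \nr{u'}_{L^2(0,\ell)}^2 + \Bigl(\frac{1}{\ell} + \frac{1}{\delta}\Bigr) \nr{u}_{L^2(0,\ell)}^2.
\]
Choosing $\delta = \e$ produces the claimed estimate with $C = \frac{1}{\ell} + \frac{1}{\e}$.

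There is no real obstacle here; the only point to be a little careful about is the density step, namely verifying that the pointwise identity for $|u|^2$ holds for general $u \in H^1(0,\ell)$ (so that $\nr{u}_{L^\infty}$ is well defined as the essential supremum, realized by the continuous representative). This follows from the fact that $H^1(0,\ell) \hookrightarrow C([0,\ell])$ in one dimension, which can either be invoked directly or obtained by first proving the inequality for smooth $u$ and then passing to the limit using the continuity of the trace-like evaluation $u \mapsto u(x)$ on $H^1(0,\ell)$.
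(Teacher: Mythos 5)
Your proof is correct and follows essentially the same route as the paper: write $\abs{u(x)}^2$ via the fundamental theorem of calculus applied to $\abs{u}^2$, absorb the cross term with Young's inequality, and control the basepoint contribution by $\nr{u}_{L^2(0,\ell)}^2/\ell$. The only cosmetic difference is that you average the inequality over the basepoint $y$, whereas the paper evaluates at a point $x_0$ where $\abs{u}$ attains its minimum; both yield the same constant $C = \tfrac 1\ell + \tfrac 1\e$.
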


\begin{proof}
Let $u \in H^1(0,\ell)$ and let $x_0 \in [0,\ell]$ be such that $\min \abs u = \abs{u(x_0)}$. For any $x \in [0,\ell]$ we have 
\[
\abs{u(x)}^2 \leq \abs{u(x_0)}^2 + \abs{\int_{x_0}^x 2 u(s) u'(s) \, ds} \leq \abs{u(x_0)}^2 + \e \nr{u'}_{L^2(0,\ell)}^2 + \frac {\nr{u}_{L^2(0,\ell)}^2} \e.
\]
Since $\ell \abs{u(x_0)}^2 \leq \nr{u}_{L^2(0,\ell)}^2$, the conclusion follows.
\end{proof}

We recall that an operator $A$ on a Hilbert space $\Hc$ is said to be sectorial if there exist $\g_0 \in \R$ and $\th \in \big[ 0, \frac \pi 2 \big[$ such that $\innp{A\f}{\f}_\Hc$ belongs to the sector $\Sigma_{\g_0,\th} = \set{\z \in \C, \abs{\arg(\z-\g_0)} \leq \th}$ for all $\f \in \Dom(A)$ with $\nr{\f}_{\Hc} = 1$. Then it is said to be maximal sectorial if some (and hence any) $\z \in \C \setminus \Sigma_{\g_0,\th}$ belongs to the resolvent set of $A$. We similarly define dissipative and maximal dissipative operators by replacing the sector $\Sigma_{\g_0,\th}$ by the half-space $\set{\z \in \C, \Im(\z) \leq 0}$.

\begin{proposition} \label{prop:Pc}
Let $a,b \in \R$.
\begin{enumerate}[(i)]
\item 
The operator $\Pc_{a,b}$ is maximal sectorial.
\item If $a \geq 0$ then $\Pc_{a,b}$ is maximal dissipative.
\item The adjoint of $\Pc_{a,b}$ is $\Pc_{-a,b}$.
\end{enumerate}
\end{proposition}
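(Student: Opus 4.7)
My approach is to analyse $\Pc_{a,b}$ through the natural sesquilinear form obtained by integrating by parts in $\langle \Pc_{a,b} U, V \rangle_\HH$:
\[
q_{a,b}(U, V) = \int_\O \big( \nabla u \cdot \overline{\nabla \f} + \nabla v \cdot \overline{\nabla \p} \big) \, dx\,dy - i \int_{\R^{d-1}} \langle M_{a,b} U(\cdot, 0), V(\cdot, 0) \rangle_{\C^2} \, dx,
\]
defined for $U = (u, v)$ and $V = (\f, \p)$ in $H^1(\O; \C^2)$. The Neumann conditions at $y = \ell$ make the corresponding boundary integrals vanish, and those at $y = 0$ produce exactly the $M_{a,b}$-term thanks to \eqref{dom-Pc}.

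To obtain (i) I show that $q_{a,b}$ is a closed sectorial form with form domain $H^1(\O; \C^2)$. Separating real and imaginary parts yields
\[
\Re q_{a,b}(U, U) = \nr{\nabla u}^2 + \nr{\nabla v}^2 + 2b \int_{\R^{d-1}} \Im(v \bar u)(x, 0) \, dx, \quad \Im q_{a,b}(U, U) = - a \nr{u(\cdot, 0)}^2_{L^2(\R^{d-1})}.
\]
Fubini plus Lemma \ref{lem:trace} gives, for any $\e > 0$, the estimate $\nr{w(\cdot, 0)}_{L^2(\R^{d-1})}^2 \leq \e \nr{\partial_y w}^2_{L^2(\O)} + C_\e \nr{w}^2_{L^2(\O)}$ for every $w \in H^1(\O)$. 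Choosing $\e$ small enough I absorb the boundary terms into the gradient ones, producing constants $c, C > 0$ for which $\Re q_{a,b}(U, U) + C \nr{U}^2_\HH \geq c \nr{U}^2_{H^1}$ and $\abs{\Im q_{a,b}(U, U)} \leq C (\Re q_{a,b}(U, U) + C \nr{U}^2_\HH)$. Thus $q_{a,b}$ is closed and sectorial, and Kato's first representation theorem produces the unique m-sectorial operator $\widetilde \Pc_{a,b}$ associated with $q_{a,b}$.

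The inclusion $\Pc_{a,b} \subset \widetilde \Pc_{a,b}$ is immediate from Green's identity. Conversely, for $U \in \Dom(\widetilde \Pc_{a,b})$ with $\widetilde \Pc_{a,b} U = F$, testing the identity $q_{a,b}(U, V) = \langle F, V \rangle_\HH$ against $V \in C^\infty_c(\O; \C^2)$ gives $-\D U = F$ distributionally, and then the product structure of $\O = \R^{d-1} \times ]0, \ell[$ allows one to conclude $U \in H^2(\O; \C^2)$: a partial Fourier transform in $x$ reduces the question to $H^2(]0, \ell[; \C^2)$ regularity for the family of 1D problems with symbol $\Tc_{a,b} + \abs{\x}^2$, which is classical. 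A final integration by parts in the variational equation recovers the pointwise conditions \eqref{dom-Pc}, yielding the reverse inclusion and completing (i). Part (ii) follows at once: when $a \geq 0$ the formulas above show $\Im \langle \Pc_{a,b} U, U \rangle_\HH \leq 0$ for every $U \in \Dom(\Pc_{a,b})$, and the range condition in Lumer--Phillips is supplied by any real negative point of $\rho(\Pc_{a,b})$ furnished by (i).

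For (iii) I use form duality. A short computation based on the matrix identity $(M_{a,b})^\intercal = -M_{-a,b}$ (valid because $M_{a,b}$ has real entries) shows that the adjoint sesquilinear form $(V, U) \mapsto \overline{q_{a,b}(U, V)}$ coincides with $q_{-a, b}(V, U)$. The general correspondence between closed sectorial forms and their adjoints then identifies the Hilbert-space adjoint of the m-sectorial operator associated with $q_{a,b}$ with the one associated with $q_{-a, b}$, which by the previous paragraph is exactly $\Pc_{-a, b}$. The main technical obstacle in the whole argument is the $H^2$ regularity step in the identification of $\widetilde \Pc_{a,b}$ with $\Pc_{a,b}$: on a bounded smooth domain this is textbook elliptic regularity for Robin-type boundary data, but the unboundedness of $\O$ in the $x$-direction forces us to exploit the translation invariance of the coefficients in $x$, either via the partial Fourier transform above or through a difference-quotient argument combined with interior regularity.
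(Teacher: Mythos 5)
Your proof is correct and takes essentially the same route as the paper: both establish sectoriality through the boundary-trace estimate of Lemma \ref{lem:trace}, and both obtain maximality by a variational argument (you invoke Kato's first representation theorem for the closed sectorial form, while the paper runs Lax--Milgram by hand for each fixed $\lambda < \gamma_0$), with the passage from the weak/form domain to $H^2(\O;\C^2)$ via the product structure of $\O$ being the key technical step in both. Your observation $(M_{a,b})^\intercal = -M_{-a,b}$ for part (iii) is just a compact repackaging of the paper's direct computation of $\langle \Pc_{a,b} U, U^*\rangle_\HH = \langle U, \Pc_{-a,b} U^*\rangle_\HH$.
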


\begin{proof}
\stepp For $U = (u,v) \in \Dom(\Pc_{a,b})$ we have 
\begin{eqnarray*}
\lefteqn{\innp{\Pc_{a,b} U}{U}_\HH = \innp{-\D u}{u}_{L^2(\O)} + \innp{-\D v}{v}_{L^2(\O)}}\\
&& = \nr{\nabla u}_{L^2(\O)}^2 + \nr{\nabla v}_{L^2(\O)}^2 -ia \nr{u(\cdot,0)}_{L^2(\R^{d-1})}^2 - 2b  \Im \innp{u(\cdot,0)}{v(\cdot,0)}_{L^2(\R^{d-1})}.
\end{eqnarray*}
In particular, if $a \geq 0$ we have 
\begin{equation} \label{eq:Im-PUU}
\Im \innp{\Pc_{a,b} U}{U}_\HH = - a \nr{u(\cdot,0)}_{L^2(\R^{d-1})}^2 \leq 0,
\end{equation}
so $\Pc_{a,b}$ is dissipative. In general, with $C$ given by Lemma \ref{lem:trace} applied with $\e = \frac 1{2(1+\abs a+\abs b)}$ we have 
\begin{eqnarray*}
\lefteqn{\Re \innp{\Pc_{a,b} U}{U}_{\HH}}\\
&& \geq \nr{\nabla u}_{L^2(\O)}^2 + \nr{\nabla v}_{L^2(\O)}^2 - \abs b \big(\nr{u(\cdot,0)}_{L^2(\R^{d-1})}^2 + \nr{v(\cdot,0)}_{L^2(\R^{d-1})}^2 \big) \\
&& \geq \frac 1 2 \nr{\nabla u}_{L^2(\O)}^2 +  \frac 12 \nr{\nabla v}_{L^2(\O)}^2 - C \abs b \big( \nr{u}_{L^2(\O)}^2 + \nr{v}_{L^2(\O)}^2 \big).
\end{eqnarray*}
and 
\[
\abs{\Im \innp{\Pc_{a,b} U}{U}_\HH}\leq \frac 12 \nr{\nabla u}_{L^2(\O)}^2  + C \abs a \nr{u}_{L^2(\O)}^2,
\]
so
\[
\Re \innp{\big(\Pc_{a,b} + C(\abs a+\abs b)\big) U}{U}_{\HH} \geq \abs{\Im \innp{\big(\Pc_{a,b} + C(\abs a+\abs b)\big) U}{U}_{\HH}}.
\]
This proves that $\Pc_{a,b}$ is sectorial with $\g_0 = -C(\abs a+\abs b)$ and $\th = \frac \pi 4$.

\stepp Let $\l < \gamma_0$. For $U \in \Dom(\Pc_{a,b})$ we have 
\begin{eqnarray} 
\nonumber 
\lefteqn{\nr{(\Pc_{a,b}-\l)U}_{\HH}^2}\\
\nonumber
&& = \nr{(\Pc_{a,b}-\gamma_0) U}_\HH^2 + (\gamma_0-\l)^2 \nr{U}_{\HH}^2 + 2 (\gamma_0-\l) \Re \innp{(\Pc_{a,b}-\gamma_0)U}{U}_\HH\\
&& \geq (\gamma_0-\l)^2 \nr{U}_{\HH}^2.\label{eq:Pc-injective}
\end{eqnarray}
In particular, $(\Pc_{a,b}-\l)$ is injective. Now let $F = (f,g) \in \HH$. For $U = (u,v)$ and $\Phi = (\f,\psi)$ in $H^1(\O;\C^2)$ we set
\begin{equation*}
\Qc(U,\Phi) = \innp{\nabla U}{\nabla \Phi}_{\HH} - \l \innp{U}{\Phi}_{\HH} 
-i \innp{M_{a,b} U(\cdot,0)}{\Phi(\cdot,0)}_{L^2(\R^{d-1};\C^2)}.
\end{equation*}
This defines a sesquilinear form on $H^1(\O;\C^2)$. The computation above ensures that it is coercive. By Lemma \ref{lem:trace}, it is also continuous. Then, by the Lax-Milgram Theorem, there exists $U \in H^1(\O;\C^2)$ such that 
\begin{equation} \label{eq:Qc-Phi}
\forall \Phi \in H^1(\O;\C^2), \quad \Qc(U,\Phi) = \innp{F}{\Phi}_{\HH}.
\end{equation}
Applied with $\Phi$ in $C_0^\infty(\O;\C^2)$, this shows that $U$ belongs to $H^2(\O;\C^2)$ and ${(-\D -\l) U} = F$ in the sense of distributions. Then, after an integration by parts,
\[
\forall \Phi \in H^1(\O;\C^2), \quad \innp{\partial_\nu U(\cdot,0) - i M_{a,b} U(\cdot,0)}{\Phi(\cdot,0)}_{L^2(\R^{d-1},\C)} = 0.
\]
This implies that $U$ belongs to $\Dom(\Pc_{a,b})$, so $(\Pc_{a,b}-\l)U = F$ and $(\Pc_{a,b}-\l)$ is surjective. With \eqref{eq:Pc-injective} we see that $(\Pc_{a,b}-\l)\inv$ is bounded on $\HH$, so $\l$ belongs to the resolvent set of $\Pc_{a,b}$. This proves that $\Pc_{a,b}$ is maximal sectorial, and maximal dissipative if $a\geq 0$.

\stepp By direct computation we see that for $U \in \Dom(\Pc_{a,b})$ and $U^* \in \Dom(\Pc_{-a,b})$ we have
\[
\innp{\Pc_{a,b} U}{U^*}_\HH = \innp{U}{\Pc_{-a,b} U^*}_\HH,
\]
so $\Dom(\Pc_{-a,b}) \subset \Dom(\Pc_{a,b}^*)$ and  $\Pc_{a,b}^*$ coincides with $\Pc_{-a,b}$ on $\Dom(\Pc_{-a,b})$. On the other hand, with the same kind of argument as above, we check that if for some $U^* \in \HH$ there exists $F \in \HH$ such that 
\[
\forall U \in \Dom(\Pc_{a,b}), \quad \innp{\Pc_{a,b} U}{U^*}_\HH = \innp{U}{F}_\HH,
\]
then $U^*$ belongs to $\Dom(\Pc_{-a,b})$. This proves that $\Dom(\Pc_{a,b}^*) \subset \Dom(\Pc_{-a,b})$.   Finally we have proved that $\Pc_{a,b}^* = \Pc_{-a,b}$ and the proof of the proposition is complete.
\end{proof}

We also need similar properties for the transverse operator $\Tc_{a,b}$.

\begin{proposition} \label{prop:Tc}
For $a,b \in \R$ the operator $\Tc_{a,b}$ is maximal sectorial on $\Sc$, its spectrum consists of a sequence of isolated eigenvalues with finite multiplicities and $\Tc_{a,b}^* = \Tc_{-a,b}$. If $a \geq 0$ then $\Tc_{a,b}$ is also maximal dissipative. If $a > 0$ and $b \neq 0$, then all the eigenvalues have negative imaginary parts.
\end{proposition}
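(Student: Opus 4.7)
The argument will closely mirror that of Proposition \ref{prop:Pc}, with the extra input that on the bounded interval $]0,\ell[$ the operator $\Tc_{a,b}$ has compact resolvent, so the spectrum will be automatically discrete. For $U = (u,v) \in \Dom(\Tc_{a,b})$ I would integrate by parts and substitute $u'(\ell) = v'(\ell) = 0$ together with $u'(0) = -i(au(0)+bv(0))$ and $v'(0) = ibu(0)$ (both read off from $U'(0) + iM_{a,b}U(0) = 0$) to obtain
\[
\innp{\Tc_{a,b}U}{U}_{\SSS} = \nr{u'}_{L^2}^2 + \nr{v'}_{L^2}^2 - ia\abs{u(0)}^2 - 2b\,\Im\bigl(u(0)\overline{v(0)}\bigr).
\]
The purely imaginary piece is $-a\abs{u(0)}^2$, which gives dissipativity whenever $a \geq 0$. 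The coupling term $2b\,\Im(u(0)\overline{v(0)})$ will be controlled by Lemma \ref{lem:trace} through $\tfrac 12(\nr{u'}_{L^2}^2 + \nr{v'}_{L^2}^2) + C(\nr{u}_{L^2}^2 + \nr{v}_{L^2}^2)$, so that $\Tc_{a,b}$ comes out sectorial with some vertex $\g_0 \in \R$ and aperture $\tfrac \pi 4$.

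For maximal sectoriality I would fix $\l < \g_0$ and solve $(\Tc_{a,b}-\l)U = F$ by applying the Lax-Milgram Theorem to
\[
\Qc(U,\Phi) = \innp{U'}{\Phi'}_{\SSS} - \l \innp{U}{\Phi}_{\SSS} - i\innp{M_{a,b}U(0)}{\Phi(0)}_{\C^2}
\]
on $H^1(0,\ell;\C^2)$: continuity uses Lemma \ref{lem:trace}, coercivity is exactly the sectorial estimate just obtained. Testing first against $C_0^\infty(]0,\ell[;\C^2)$ and then against the full $H^1(0,\ell;\C^2)$ recovers $U \in \Dom(\Tc_{a,b})$ with $(\Tc_{a,b}-\l)U = F$. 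Because $\Dom(\Tc_{a,b}) \subset H^2(0,\ell;\C^2)$ embeds compactly in $\SSS$, the resolvent will be compact and the spectrum will reduce to a sequence of isolated eigenvalues with finite algebraic multiplicities. The adjoint identity $\Tc_{a,b}^* = \Tc_{-a,b}$ will be proved as in Proposition \ref{prop:Pc}: a double integration by parts on $\innp{\Tc_{a,b}U}{U^*}_\SSS$ produces boundary terms which, at $y = \ell$, force Neumann conditions on $U^*$ and, at $y = 0$, collapse precisely to $U^{*\prime}(0) + iM_{-a,b}U^*(0) = 0$, giving $\Dom(\Tc_{-a,b}) \subset \Dom(\Tc_{a,b}^*)$; the reverse inclusion comes from the same Lax-Milgram trick.

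The main obstacle will be the strict inequality $\Im(\l) < 0$ for every eigenvalue when $a > 0$ and $b \neq 0$. If $U = (u,v)$ is an eigenvector for $\l$, the identity $\Im(\l)\nr{U}_\SSS^2 = -a\abs{u(0)}^2$ gives $\Im(\l) \leq 0$, with equality forcing $u(0) = 0$. But $u(0) = 0$ simplifies the boundary data to $v'(0) = ibu(0) = 0$ and $u'(0) = -ibv(0)$, so $v$ solves the Neumann eigenproblem $-v'' = \l v$ with $v'(0) = v'(\ell) = 0$. I would then split into two subcases: if $v \equiv 0$ then $u$ satisfies $-u'' = \l u$ with $u(0) = u'(0) = 0$ and must vanish, contradicting $U \neq 0$; otherwise $\l = (n\pi/\ell)^2$ and $v = c\cos(n\pi y/\ell)$ with $c \neq 0$, and a short case analysis on $n = 0$ and $n \geq 1$ (solving $-u'' = \l u$ with $u(0) = 0$, $u'(0) = -ibc$ and checking that the condition $u'(\ell) = 0$ cannot then be met) shows no such $u$ exists, the reason being that $b \neq 0$ makes the Neumann mode of the undamped component incompatible with the Dirichlet-type constraint $u(0) = 0$ on the damped one.
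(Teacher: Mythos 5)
Your proposal is correct and follows essentially the same route as the paper: mirror the sectoriality/dissipativity/adjoint argument of Proposition \ref{prop:Pc}, use compactness of $\Dom(\Tc_{a,b}) \hookrightarrow \SSS$ for discreteness of the spectrum, and rule out real eigenvalues by forcing $u(0)=0$ and then solving the resulting ODE system explicitly. Your version is in fact slightly more careful than the paper's at one point, since you separate the case $v\equiv 0$ (where $u(0)=u'(0)=0$ immediately kills $u$) from the case $v\neq 0$ (where $\l$ is pinned to a Neumann eigenvalue and $u'(\ell)\neq 0$ gives the contradiction), whereas the paper implicitly assumes $v\neq0$ when it deduces $\l=n^2\pi^2/\ell^2$.
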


\begin{proof}
The facts that $\Tc_{a,b}$ is maximal sectorial, maximal dissipative if $a\geq0$, and that $\Tc_{a,b}^* = \Tc_{-a,b}$ are proved as for $\Pc_{a,b}$ in Proposition \ref{prop:Pc}. Since $\Tc_{a,b}$ is maximal sectorial, its resolvent set is not empty. And since $\Dom(\Tc_{a,b})$ is compactly embedded in $\SSS$, its spectrum consists of isolated eigenvalues with finite multiplicities. 

Now assume that $a > 0$ and $b\neq 0$. By dissipativeness, the eigenvalues of $\Tc_{a,b}$ have non-positive imaginary parts. Now assume that $\l \in \R$ and $U = (u,v) \in \Dom(\Tc_{a,b})$ are such that $\Tc_{a,b} U = \l U$. In particular, as in \eqref{eq:Im-PUU} we have 
\[
0 = \Im \innp{\Tc_{a,b} U}{U}_\SSS = -a\abs{u(0)}^2.
\]
This gives $u(0) = 0$ and hence $v'(0) = 0$. We have $-v'' = \l v$ on $[0,\ell]$ and $v'(0) = v'(\ell) = 0$, so there exists $n \in \N$ such that $\l = n^2 \pi^2 / \ell^2$. Then we have $-u'' = (n^2 \pi^2/\ell^2) u$, $u'(\ell) = 0$ and $u(0) = 0$. This implies that $u = 0$. Then $u'(0) = 0$, so $v(0) = 0$, which implies that $v = 0$. Thus, $\l$ is not an eigenvalue of $\Tc_{a,b}$.
\end{proof}

\section{Transverse eigenvalues} \label{sec-eigenvalues}
 
Let $a,b \in \R$. In this section, we give more precise properties about the localization and the multiplicities of the eigenvalues of $\Tc_{a,b}$. When there is no ambiguity we omit the subscripts $a,b$ of all the involved quantities.\\

As usual in this kind of context, it is easier to discuss the square roots of these eigenvalues. We set 
\begin{equation} \label{def:Sigma}
\Zc = \Zc_{a,b} = \set{z \in \C \st z^2 \text{ is an eigenvalue of } \Tc_{a,b}}.
\end{equation}

Let 
\[
\nu = \frac \pi \ell.
\]
As said in introduction, when $a=b=0$ we have two decoupled Schr\"odinger equations with Neumann boundary conditions, so $\Zc_{0,0} = \nu \Z$, and for all $n \in \N$ the eigenvalue $\nu^2 n^2$ has multiplicity 2. An orthonormal basis of eigenfunctions is given by 
\[
\left( \frac 1 {\sqrt{2\ell}} A_j e_{n\nu} \right) _{n \in \N, j \in \{1,2\}},
\]
where $(A_1,A_2)$ is any orthonormal basis of $\C^2$ and $e_{n\nu}(x) = 2 \cos(n\nu x)$ for all $n \in \N$ and $x \in [0,\ell]$.\\

Let
\[
\mu_\pm = \mu_{\pm,a,b} = \frac {a \pm \d} 2,
\quad \text{where} \quad 
\d = 
\begin{cases}
\sqrt{a^2 - 4b^2} & \text{if } a^2 \geq 4b^2,\\
i \sqrt{4b^2 - a^2} & \text{if } a^2 \leq 4b^2.
\end{cases}
\]
For $z \in \C$ and $x \in [0,\ell]$ we set 
\begin{equation} \label{def:ez}
e_z(x) = e^{izx} + e^{2iz\ell} e^{-izx}
\end{equation}
and 
\begin{equation} \label{def:tilde-ez}
\tilde e_z(x) = \frac 1 {2iz} (\ell-x) \big( e^{izx} - e^{2iz\ell} e^{-izx} \big).
\end{equation}
We have 
\begin{equation} \label{eq:ez-tilde-ez}
-e_z'' - z^2 e_z = 0 \quad \text{and} \quad -\tilde e_z'' - z^2 \tilde e_z = e_z. 
\end{equation}

        \detail 
        {
        \[
        (\partial + iM) (A_1 e_z^1)(0) = \frac 2{z-\mu} \frac {\mu + i\ell(z^2 - \mu M)}{2iz} A_1.
        \]
        \[
        e_z^2(x) = - \frac {(\ell-x)^2}{8z^2} e_z(x) + \frac {i(\ell-x)}{8z^3} e_z^*(x)
        \]
        \[
        (-\partial^2 - z^2) e_z^2 = e_z^1, \quad (e_z^2)'(\ell) = 0.
        \]
        }

Notice that 0 is an eigenvalue of $\Tc$ if and only if $b = 0$. In the following proposition we give a characterization of non-zero eigenvalues.

\begin{proposition} \label{prop:eq-z} 
Let $a,b \in \R$ and $z \in \C^*$. 
\begin{enumerate} [\rm(i)]
\item  
$z^2$ is an eigenvalue of $\Tc$ if and only if %$z \notin \{\mu_-,\mu_+\}$ and 
\begin{equation} \label{eq:exp-2izl}
\phi_{-}(z) \phi_{+}(z) = 0,
\end{equation}
where we have set
\begin{equation} \label{def:phi}
\begin{aligned}
\phi_\pm (z) = \phi_{\pm,a,b}(z) 
& = (z-\mu_{\pm})e^{2iz\ell} - (z+\mu_{\pm})\\
& = z(e^{2iz\ell}-1) - \mu_\pm (e^{2iz\ell} + 1).
\end{aligned}
\end{equation}
Moreover, if $a^2 \neq 4b^2$ the functions $\phi_{-}$ and $\phi_{+}$ have no common zero.
\item 
If $\phi_\pm(z) = 0$ we have  
\begin{equation} \label{expr-ker-T}
\ker(\Tc-z^2) = \set{A_1 e_z, A_1 \in \ker(M-\mu_{\pm})}
\end{equation}
and 
\begin{multline} \label{expr-ker-T-2}
\ker\big( (\Tc-z^2)^2 \big)\\
 = \set{A_2 e_z + A_1 \tilde e_z, A_1 \in \ker(M-\mu_{\pm}) \text{ and } (M-\mu_{\pm}) A_2 = \eta_{\pm}(z) A_1},
\end{multline}
where 
\begin{equation} \label{def:eta}
\eta_\pm(z) = \eta_{\pm,a,b}(z) = \frac {\mu_{\pm} + i\ell(z^2-\mu_{\pm}^2)}{2z^2}.
\end{equation}
\end{enumerate}
\end{proposition}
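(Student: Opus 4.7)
My plan is to reduce each assertion to a $\C^2$-linear algebra problem for the coefficient vectors, using explicit solutions of the scalar ODE $-u'' = z^2 u$ adapted to the Neumann condition at $\ell$. The starting point is the observation that for $z \in \C^*$, the one-dimensional solution space of $-u'' = z^2 u$ on $(0,\ell)$ with $u'(\ell) = 0$ is spanned by $e_z$ (one checks directly that $e_z'(\ell) = 0$), and similarly $\tilde e_z'(\ell) = 0$. Hence any $U \in H^2(0,\ell;\C^2)$ with $U'(\ell) = 0$ and $(\Tc - z^2) U = 0$ is of the form $U = A_1 e_z$, and any $U$ with $U'(\ell)=0$ solving $(\Tc - z^2) U = A_1 e_z$ takes the form $A_2 e_z + A_1 \tilde e_z$, by \eqref{eq:ez-tilde-ez}.

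For (i), I would plug $U = A_1 e_z$ into the boundary condition at $0$ and use $e_z(0) = 1 + e^{2iz\ell}$, $e_z'(0) = iz(1-e^{2iz\ell})$ to rewrite it as the linear system
\[
\bigl(z(1-e^{2iz\ell}) I + M_{a,b}(1+e^{2iz\ell})\bigr) A_1 = 0.
\]
Since $M_{a,b}$ has eigenvalues $\mu_+, \mu_-$, the determinant of a matrix $\alpha I + \beta M_{a,b}$ equals $(\alpha + \beta\mu_+)(\alpha + \beta\mu_-)$, which here gives (up to sign) $\phi_+(z)\phi_-(z)$, proving \eqref{eq:exp-2izl}. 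For the no-common-zero claim when $a^2 \neq 4b^2$, I would compute $\phi_+ - \phi_- = -\delta(1+e^{2iz\ell})$ with $\delta \neq 0$: a common zero forces $e^{2iz\ell} = -1$, hence $\phi_\pm(z) = -2z$, and thus $z = 0$, a contradiction.

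For \eqref{expr-ker-T} in (ii), when $\phi_\pm(z) = 0$ and $z \neq 0$ one necessarily has $1+e^{2iz\ell} \neq 0$ (else $\phi_\pm = -2z$), so the relation $z(1-e^{2iz\ell}) = -\mu_\pm(1+e^{2iz\ell})$ lets me factor the system as $-(1+e^{2iz\ell})(M_{a,b} - \mu_\pm) A_1 = 0$, whose kernel is $\ker(M_{a,b} - \mu_\pm)$. For \eqref{expr-ker-T-2}, $U \in \ker\bigl((\Tc - z^2)^2\bigr)$ iff $(\Tc - z^2) U$ is an eigenfunction, which by the preceding step equals $A_1 e_z$ with $A_1 \in \ker(M_{a,b} - \mu_\pm)$. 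Then $U = A_2 e_z + A_1 \tilde e_z$ and it remains to impose $U'(0) + i M_{a,b} U(0) = 0$. Using the values $\tilde e_z(0) = \frac{\ell}{2iz}(1-e^{2iz\ell})$ and $\tilde e_z'(0) = -\frac{1}{2iz}(1-e^{2iz\ell}) + \frac{\ell}{2}(1+e^{2iz\ell})$, together with $M_{a,b} A_1 = \mu_\pm A_1$, the $A_2$-part of the condition becomes $i(1+e^{2iz\ell})(M_{a,b}-\mu_\pm) A_2$, and the $A_1$-part has to be reduced to $i(1+e^{2iz\ell}) \eta_\pm(z) A_1$, which gives the announced relation $(M_{a,b}-\mu_\pm)A_2 = \eta_\pm(z) A_1$.

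\textbf{Expected obstacle.} The one nontrivial step is this last algebraic reduction of the $A_1$-coefficient to the closed form $\eta_\pm(z) = \frac{\mu_\pm + i\ell(z^2 - \mu_\pm^2)}{2z^2}$. A priori the coefficient mixes $(1 \pm e^{2iz\ell})$, $\ell$, $z$ and $\mu_\pm$ in a seemingly inhomogeneous way, and one has to substitute $z(1-e^{2iz\ell}) = -\mu_\pm(1+e^{2iz\ell})$ both linearly and after squaring --- the latter yielding $z^2(1-e^{2iz\ell})^2 = \mu_\pm^2(1+e^{2iz\ell})^2$, which is what eventually produces the factor $z^2 - \mu_\pm^2$ in the numerator of $\eta_\pm(z)$ after $(1+e^{2iz\ell})$ has been factored out.
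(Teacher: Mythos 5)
Your proposal is correct and follows essentially the same route as the paper: solve the ODE explicitly with $e_z$, $\tilde e_z$, reduce the boundary condition at $0$ to a $2\times 2$ system for the coefficient vector, and analyze it through the spectral data of $M_{a,b}$ (your determinant factorization $\det(\alpha I+\beta M_{a,b})=(\alpha+\beta\mu_+)(\alpha+\beta\mu_-)$ and the uniform treatment of $b=0$ are only cosmetic simplifications of the paper's computation). The final algebraic reduction you flag does close exactly as you outline: substituting $z(1-e^{2iz\ell})=-\mu_\pm(1+e^{2iz\ell})$ in the $A_1$-coefficient and dividing by $i(1+e^{2iz\ell})\neq 0$ gives precisely $(M_{a,b}-\mu_\pm)A_2=\eta_\pm(z)A_1$.
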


\begin{proof}
\stepp Let $z \in \C^*$ and $U \in H^2(]0,\ell[,\C^2)$. Then we have $-U'' = z^2 U$ if and only if there exist $A,\tilde A \in \C^2$ such that
\begin{equation} \label{expr-u-v}
\forall x \in ]0,\ell[, \quad U(x) = A e^{izx} + \tilde A e^{-izx}.
\end{equation}
Then $U \in \Dom(\Tc)$ if and only if $\tilde A = e^{2iz\ell} A$ and 
\begin{equation} \label{eq:A}
\big( (1+e^{2iz\ell}) M + (1-e^{2iz\ell})z \big) A = 0.
\end{equation}
There exists a non trivial solution $A$ of \eqref{eq:A} if and only if 
\begin{multline} \label{eq:z-1}
z^2 (1-e^{2iz\ell})^2 + az (1-e^{2iz\ell})(1+e^{2iz\ell}) + b^2 (1+e^{2iz\ell})^2\\
= \det \big( (1+e^{2iz\ell}) M + (1-e^{2iz\ell})z  \big) = 0.
\end{multline}
Assume that $b\neq 0$. We have $e^{2iz\ell} \neq 1$ if $z$ is a solution of \eqref{eq:z-1}. Then \eqref{eq:z-1} can be seen as a second order equation in $z$ with $z$-dependant coefficients. The solutions are given by 
\begin{equation} \label{eq:exp-2izl-bis}
z_\pm  = \mu_\pm  \frac {e^{2iz\ell}+1}{e^{2iz\ell}-1},
\end{equation}
and \eqref{eq:exp-2izl} follows. We conclude similarly when $b = 0$. Moreover, since $e^{2iz\ell} + 1 \neq 0$, we see that if $\phi_-(z) = 0 = \phi_+(z)$ we have $\mu_+ = \mu_-$ and hence $a^2 = 4b^2$.

\stepp If $b\neq 0$ we have $z_\pm \neq \mu_\pm$ by \eqref{eq:exp-2izl-bis}. Then we can write 
\[
(1-e^{2iz_\pm \ell}) = -\frac {2\mu_\pm}{z_\pm-\mu_\pm} \quad \text{and} \quad (1+e^{2iz_\pm\ell}) = \frac {2z_\pm}{z_\pm-\mu_\pm},
\]
so \eqref{eq:A} with $z_\pm$ holds if and only if $A \in \ker(M-\mu_\pm)$. This gives \eqref{expr-ker-T}.

Let $\s \in \{-,+\}$ and $U_2 \in \ker((\Tc-z_\s^2)^2)$. Let $U_1 = (\Tc-z_\s^2) U_2 \in \ker(\Tc-z_\s^2)$. By \eqref{expr-ker-T} there exists $A_1 \in \ker(M-\mu_\s)$ such that $U_1 = A_1 e_{z_\s}$. Then, by \eqref{eq:ez-tilde-ez} there exist $A_2,\tilde A_2 \in \C^2$ such that, for all $x \in [0,\ell]$,
\[
U_2(x) = A_1 \tilde e_{z_\s}(x) + A_2 e^{iz_\s x} + \tilde A_2 e^{-iz_\s x}.
\]
Since $U_2'(\ell) = 0$ we necessarily have $\tilde A_2 = e^{2i z_\s \ell} A_2$, so that 
\begin{equation} \label{eq:expr-U2}
U_2 = A_1 \tilde e_{z_\s} + A_2 e_{z_\s}.
\end{equation}
Finally the condition $U_2'(0) + iMU_2(0) = 0$ gives 
\begin{equation} \label{eq:cond-U2}
(M-\mu_\sigma) A_2 = \eta_\s(z_\s) A_1.
\end{equation}
Conversely, if $U_2$ satisfies \eqref{eq:expr-U2} and \eqref{eq:cond-U2} then it belongs to $\ker((\Tc-z_\s^2)^2)$. The proof is similar when $b = 0$.
\end{proof}

\detail{
\[
f'(z) = 2i\ell e^{2iz\ell} + \frac {2\mu}{(z-\mu)^2}
\]

\[
f'(z) = \frac 2 {(z-\mu)^2} \big( \mu + i\ell (z^2-\mu^2) \big)
\]

\[
f''(z) = - 4\ell^2 e^{2iz\ell} - \frac {4\mu}{(z-\mu)^2}
\]

\[
f''(z) = - \frac {4\ell (z+\mu)}{(z-\mu)^2} \big( \ell^2(z-\mu) - i\big)
\]

\[
U_3(x) = - \frac {A_1}{8z^2} (x-\ell)^2 e_z - \frac {iA_1}{8z^3} (x-\ell) e_z^* + \frac {iA_2}{2z} (x-\ell) e_z^* + A_3e_z
\]
}

Now we apply \eqref{eq:exp-2izl} to prove that the large eigenvalues of $\Tc = \Tc_{a,b}$ are close to the eigenvalues of the Neumann decoupled operator $\Tc_0 = \Tc_{0,0}$.

\begin{proposition} \label{prop:Sigma-disks}
Let $a,b \in \R$.
\begin{enumerate}[\rm(i)]
\item Let $\e > 0$. There exists $R_\e > 0$ such that for $s \in [0,1]$ and $z \in \Zc_{sa,sb}$ with $\abs z \geq R_\e$ we have $\mathsf{dist}(z, \nu \Z) < \e$. In particular there exists $n_0 \in \N^*$ such that for all $s \in [0,1]$ we have 
\[
\Zc_{sa,sb} \subset  D(0, R) \cup \bigcup_{\abs n \geq n_0} D \left(n\nu , \frac \nu 6 \right),  \quad \text{where }  R = \left(n_0 - \frac 12 \right)\nu.
\]
\item For $n \geq n_0$ the functions $\phi_-$ and $\phi_+$ have exactly one zero in the disk $D\big( n\nu , \frac \nu 6\big)$.
\end{enumerate}
\end{proposition}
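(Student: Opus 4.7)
The plan is to start from the characterization in Proposition \ref{prop:eq-z}(i) and rewrite the equation $\phi_\pm(z) = 0$, for $|z|$ large, in the form
\[
e^{2iz\ell} = \frac{z + \mu_\pm}{z - \mu_\pm}.
\]
A preliminary observation is the scaling $\mu_{\pm,sa,sb} = s \mu_{\pm,a,b}$, which follows from $\delta_{sa,sb} = s \delta_{a,b}$. Thus $|\mu_{\pm,sa,sb}|$ is bounded by a constant $M$ depending only on $(a,b)$, not on $s \in [0,1]$. For $|z| \geq 2M$ we have $\left|\frac{z+\mu_\pm}{z-\mu_\pm} - 1\right| \leq 4M/|z|$, hence $|e^{2iz\ell} - 1| \leq 4M/|z|$ uniformly in $s$.

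For (i), I would invoke the elementary fact that whenever $|e^w - 1| < 1/2$ there exists $n \in \Z$ with $|w - 2i\pi n| \leq C|e^w - 1|$ (using the principal branch of $\log(1+\cdot)$ and the $2i\pi$-periodicity of the exponential). Applied with $w = 2iz\ell$, this gives $\mathsf{dist}(z, \nu\Z) \leq C'/|z|$, which yields the first statement once $R_\e$ is chosen so that $C'/R_\e < \e$. For the ``in particular'' clause, I fix $\e < \nu/6$, pick $n_0 \in \N^*$ with $(n_0 - 1/2)\nu \geq R_\e$, and set $R = (n_0 - 1/2)\nu$. Any $z \in \Zc_{sa,sb}$ with $|z| > R$ then lies in a unique disk $D(k\nu, \nu/6)$, and the triangle inequality $|k|\nu \geq |z| - \nu/6 > (n_0 - 2/3)\nu$ forces $|k| \geq n_0$.

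For (ii), I would apply Rouch\'e's theorem on $\partial D(n\nu, \nu/6)$, with the splitting $\phi_\pm(z) = f_0(z) - \mu_\pm(e^{2iz\ell} + 1)$, where $f_0(z) := z(e^{2iz\ell} - 1)$. Since $\nu \ell = \pi$, substituting $z = n\nu + w$ gives $e^{2iz\ell} = e^{2iw\ell}$, so both $|e^{2iw\ell} - 1|$ and $|e^{2iw\ell} + 1|$ depend only on $w$, not on $n$. The first admits a positive lower bound $c_0$ on the compact circle $|w| = \nu/6$ (where it does not vanish), while the second is bounded above by a fixed constant. Hence $|f_0(z)| \geq c_0 (|n|\nu - \nu/6)$ whereas $|\mu_\pm(e^{2iz\ell}+1)|$ is uniformly bounded. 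For $n_0$ large enough (possibly enlarging the one from (i)) we thus get $|\mu_\pm(e^{2iz\ell}+1)| < |f_0(z)|$ on the boundary, and Rouch\'e yields that $\phi_\pm$ has the same number of zeros in $D(n\nu, \nu/6)$ as $f_0$, namely the single simple zero at $n\nu$.

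The main technical point will be verifying the two quantitative estimates---the uniform lower bound $c_0$ on the reference circle and the logarithmic inversion bound---and choosing a single threshold $n_0$ that works for both parts of the proposition; neither is a conceptual obstacle, but both require a little care with constants that do not depend on $s$.
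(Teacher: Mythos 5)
Your proposal is correct and follows essentially the same route as the paper: part (i) rests on rewriting $\phi_\pm(z)=0$ as $e^{2iz\ell}=(z+\mu_\pm)/(z-\mu_\pm)$ together with the uniform bound $\mu_{\pm,sa,sb}=s\,\mu_{\pm,a,b}$, and part (ii) is the same Rouch\'e comparison with $\phi_0(z)=z(e^{2iz\ell}-1)$ on the circles $C\big(n\nu,\frac\nu6\big)$. The only (harmless) difference is that the paper proves (i) by a compactness/contradiction argument, whereas you make the implication ``$e^{2iz\ell}$ close to $1$ $\Rightarrow$ $z$ close to $\nu\Z$'' quantitative via the principal logarithm, which even gives the sharper rate $\mathsf{dist}(z,\nu\Z)=\Oc(1/\abs z)$.
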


\begin{proof}
\stepp Assume by contradiction that the first statement does not hold. Then there exist sequences $(s_m)_{m \in \N}$ in $[0,1]$, $(z_m)_{m \in \N}$ in $\C$ and $(\s_m)_{m \in \N}$ in $\set\pm$ such that $\phi_{\s_m,s_ma,s_mb}(z_m) = 0$, $\mathsf{dist}(z_m,\nu\Z) \geq \e$ for all $m \in \N$ and $\abs {z_m} \to +\infty$. In particular $z_m \notin \nu\Z$ so $s_m b \neq 0$ and $z_m \neq \mu_m$, where we have set $\mu_m = \mu_{\s_m,s_m a,s_m b}$. Since $\mu_m = s_m \mu_{\s_m,a,b}$ is bounded we have 
\[
e^{2iz_m \ell}  = \frac {z_m +  \mu_m}{z_m- \mu_m}  \limt m {+\infty} 1,
\]
so $\mathsf{dist}(z_m,\nu\Z) \to 0$ and we get a contradiction. The second statement follows by choosing $\e = \frac \nu 6$ (we can take $R_{\nu/6}$ larger to ensure that it is of the required form).

\stepp For $z \in \C$ we set 
\begin{equation} \label{def:phi0}
\phi_0(z) = \phi_{\pm,0,0}(z) = (e^{2iz\ell} - 1)z.
\end{equation}
Then, for all $n \in \N^*$, $\phi_0$ has one simple zero in the disk $D\big(n\nu, \frac \nu 6 \big)$ and does not vanish on its boundary. On the other hand $\phi_\pm$ does not vanish on the circle $C\big(n\nu, \frac \nu 6 \big)$ for $n$ large enough. Choosing $n$ larger if necessary we also have on this circle
\[
\abs{\phi_\pm(z) - \phi_0(z)} = \abs{\mu_\pm} \abs{e^{2iz\ell}+1} < \abs{\phi_0(z)}.
\]
By the Rouch\'e Theorem, $\phi_\pm$ has exactly one simple zero in $D\big(n\nu, \frac \nu 6 \big)$.
\end{proof}

For $n \geq n_0$ we denote by $z_{n,\pm} = z_{n,\pm,a,b}$ the unique solution of $\phi_\pm(z) = 0$ in $D\big(n\nu,\frac \nu 6\big)$. If $a^2= 4b^2$ we can simply write $z_n$ instead of $z_{n,+}$ or $z_{n,-}$.\\

The following proposition gives a rough localization of the high frequency eigenvalues and in particular the Weyl Law for $\Tc$.

\begin{proposition} \label{prop:multiplicities-high-freq}
Let $a,b \in \R$. Let $n_0\in\N^*$ be given by Proposition \ref{prop:Sigma-disks}. The spectrum of $\Tc$ is contained in 
\[
\Dc = D(0,R^2) \cup \bigcup_{n \geq n_0} D\left( n^2 \nu^2 , \frac {n \nu^2} 2 \right).
\]
The sum of the algebraic multiplicities of the eigenvalues of $\Tc$ in $D(0,R^2)$ is $2n_0$ and, for $n \geq n_0$,
\begin{enumerate}[\rm(i)]
\item if $a^2 \neq 4b^2$, then $\Tc$ has exactly two distinct simple eigenvalues in $D\left( n^2 \nu^2 , \frac {n \nu^2} 2 \right)$, given by $z_{n,-}^2$ and $z_{n,+}^2$,
\item if $a^2 = 4b^2$, then $\Tc$ has a unique double eigenvalue in $D\left( n^2 \nu^2 , \frac {n \nu^2} 2 \right)$, given by $z_n^2$.
\end{enumerate}
In particular, for $r > 0$ we have 
\[
\frac {2\sqrt r}{\nu} - 1 \leq N(r) \leq \max \left( 2n_0, \frac {2\sqrt r}\nu + 3 \right).
\]
\end{proposition}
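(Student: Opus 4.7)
The plan is to transfer the information about the zeros of $\phi_\pm$ from Propositions \ref{prop:Sigma-disks} and \ref{prop:eq-z} to the eigenvalues of $\Tc = \Tc_{a,b}$ via the squaring map $z \mapsto z^2$, and to count algebraic multiplicities by a homotopy argument along $s \mapsto \Tc_{sa,sb}$, $s \in [0,1]$.

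For the spectrum inclusion, since $\s(\Tc) = \set{z^2 : z \in \Zc}$, Proposition \ref{prop:Sigma-disks} reduces the problem to checking that squaring maps $D(n\nu, \nu/6)$ into $D(n^2\nu^2, n\nu^2/2)$: writing $z = n\nu + w$ with $\abs w < \nu/6$ and $n \geq n_0 \geq 1$, the estimate $\abs{z^2 - n^2\nu^2} \leq 2n\nu \abs w + \abs w^2 < n\nu^2/2$ does the job, while $D(0,R)$ maps into $D(0,R^2)$ directly. A short computation on centers and radii confirms that the disks forming $\Dc$ are pairwise disjoint. I would then use the uniformity in $s$ in Proposition \ref{prop:Sigma-disks} to ensure that the boundary of each disk of $\Dc$ stays in $\rho(\Tc_{sa,sb})$ for every $s \in [0,1]$; the associated Riesz spectral projection therefore depends continuously on $s$ and has a constant integer rank. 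At $s = 0$, $\Tc_{0,0}$ is the diagonal Neumann Laplacian with eigenvalues $k^2\nu^2$ of multiplicity $2$, and by disjointness the disk $D(n^2\nu^2, n\nu^2/2)$ contains only the eigenvalue $n^2\nu^2$ (rank $2$), while $D(0, R^2)$ contains exactly the eigenvalues for $0 \leq k \leq n_0 - 1$ (total rank $2n_0$). These counts persist for $\Tc_{a,b}$.

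For cases (i) and (ii), Proposition \ref{prop:Sigma-disks}~(ii) provides the unique zero $z_{n,\pm}$ of $\phi_\pm$ in $D(n\nu, \nu/6)$. In case (i), Proposition \ref{prop:eq-z}~(i) yields $z_{n,+} \neq z_{n,-}$, and both being close to $n\nu > 0$ also gives $z_{n,+} \neq -z_{n,-}$, so the eigenvalues $z_{n,\pm}^2$ are distinct; since their algebraic multiplicities sum to $2$, each must be simple. In case (ii), $\mu_+ = \mu_-$ gives $\phi_+ = \phi_-$, so $z_n := z_{n,+} = z_{n,-}$ is the unique eigenvalue in the disk, necessarily of algebraic multiplicity $2$. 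Finally, for the Weyl law, the disk $D(n^2\nu^2, n\nu^2/2)$ is contained in $\set{\Re \lambda \leq r}$ if and only if $n^2\nu^2 + n\nu^2/2 \leq r$ and intersects it if and only if $n^2\nu^2 - n\nu^2/2 \leq r$; solving these quadratic inequalities in $n$ and combining with the $2n_0$ eigenvalues in $D(0, R^2)$ yields both bounds after elementary simplification. The main obstacle is the uniformity in $s$ in Proposition \ref{prop:Sigma-disks}, on which the homotopy argument hinges.
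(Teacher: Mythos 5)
Your proposal is correct and follows essentially the same route as the paper: a homotopy along $s \mapsto \Tc_{sa,sb}$, using the $s$-uniform localization from Proposition~\ref{prop:Sigma-disks} to keep the relevant contours in the resolvent set so that the Riesz projections have constant rank, then reading off the count from the Neumann case $s=0$, with cases (i) and (ii) distinguished exactly as you do via the zeros $z_{n,\pm}$. The uniformity in $s$ that you flag as ``the main obstacle'' is already supplied verbatim by Proposition~\ref{prop:Sigma-disks}(i), so there is no gap; your explicit check that squaring sends $D(n\nu,\nu/6)$ into $D(n^2\nu^2, n\nu^2/2)$ and that the disks of $\Dc$ are pairwise disjoint is a detail the paper leaves implicit, and is worth writing out.
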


\begin{proof}
Let $n \geq n_0$. The operator $\Tc_s = \Tc_{sa,sb}$ is analytic with respect to the parameter $s$ in the sense of Kato (family of type B, see \cite{kato}). By Proposition \ref{prop:Sigma-disks}, the circle $C\big( n^2 \nu^2 , \frac {n \nu^2} 2 \big)$ is included in the resolvent set of $\Tc_s$ for all $s \in [0,1]$, so the number of eigenvalues (counted with multiplicities) of $\Tc_s$ in the disk $D_n = D\big( n^2 \nu^2 , \frac {n \nu^2} 2 \big)$ is independant of $s \in [0,1]$. Since $\Tc_0$ has exactly one double eigenvalue in $D_n$, the number of eigenvalues (counted with multiplicities) of $\Tc = \Tc_1$ in $D_n$ is also equal to 2.

If $a^2 \neq 4b^2$, we already know that $\Tc$ has two distinct eigenvalues $z_{n,-}^2$ and $z_{n,+}^2$ in $D_n$. They are necessarily simple. 

If $a^2 = 4b^2$, then $z_n^2$ is the unique eigenvalue of $\Tc$ in $D_n$. It has algebraic multiplicity 2 (we know from Proposition \ref{prop:eq-z} that it is geometrically simple if $(a,b) \neq (0,0)$).

Similarly, $\Tc$ has the same number of eigenvalues as $\Tc_0$ in $D(0,R^2)$, that is $2n_0$ (counted with multiplicities).

Now let $r > 0$. If $r \leq R^2 =  \big(n_0-\frac 12\big)^2\nu^2$, then $N(R) \leq 2n_0$. Otherwise, there exists $n \geq n_0$ such that $r \in \big[\big(n-\frac 12\big)^2 \nu^2, \big(n+\frac 12\big)^2 \nu^2 \big]$. Then we have 
\[
\frac {2\sqrt r}\nu - 1 \leq 2n \leq N(r) \leq 2n+2 \leq \frac {2\sqrt r} \nu + 3,
\]
which concludes the proof.
\end{proof}

We finish this section by an asymptotic expension for the large eigenvalues of $\Tc$. We already know that all the eigenvalues have negative imaginary parts when $a>0$ and $b\neq 0$, so this asymptotics gives in particular a spectral gap for $\Tc$ in this case.

\begin{proposition} \label{prop:DL}
Let $a,b \in \R$. We have 
\[
z_{n,\pm} = n\nu - \frac {i\mu_\pm}{n\pi} + \bigo n {+\infty} (n^{-2}),% \quad \text{and} \quad 
\]
and hence 
\[
z_{n,\pm}^2 = n^2 \nu^2 - \frac {2i\mu_\pm}{\ell} + \bigo n {+\infty} (n^{-1}).
\]
\end{proposition}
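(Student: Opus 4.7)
The plan is to work directly with the characterization $\phi_\pm(z_{n,\pm})=0$ from Proposition \ref{prop:eq-z} and perform an asymptotic expansion around $n\nu$. First I would write $z_{n,\pm} = n\nu + w_{n,\pm}$, where $|w_{n,\pm}| < \nu/6$ thanks to Proposition \ref{prop:Sigma-disks}. Since $\nu\ell = \pi$, we have $e^{2i(n\nu+w)\ell} = e^{2in\pi}e^{2iw\ell} = e^{2iw\ell}$, so the equation $\phi_\pm(n\nu + w_{n,\pm}) = 0$ becomes
\[
e^{2iw_{n,\pm}\ell} = \frac{n\nu + w_{n,\pm} + \mu_\pm}{n\nu + w_{n,\pm} - \mu_\pm},
\]
or equivalently
\[
e^{2iw_{n,\pm}\ell} - 1 = \frac{2\mu_\pm}{n\nu + w_{n,\pm} - \mu_\pm}.
\]

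The next step is a bootstrap on $w_{n,\pm}$. The right-hand side is $O(1/n)$ uniformly for $|w_{n,\pm}| < \nu/6$ (since $|w_{n,\pm} - \mu_\pm|$ is bounded), hence $e^{2iw_{n,\pm}\ell} - 1 = O(1/n)$. Because $|2iw_{n,\pm}\ell| < \pi/3$, the local inverse of the exponential gives $w_{n,\pm} = O(1/n)$. Feeding this back into the equation, I would expand both sides: on the left, $e^{2iw_{n,\pm}\ell} - 1 = 2i\ell w_{n,\pm} + O(w_{n,\pm}^2) = 2i\ell w_{n,\pm} + O(n^{-2})$; on the right, $\frac{2\mu_\pm}{n\nu + w_{n,\pm} - \mu_\pm} = \frac{2\mu_\pm}{n\nu} + O(n^{-2})$. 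Equating yields
\[
w_{n,\pm} = \frac{\mu_\pm}{i n\nu\ell} + \bigo n {+\infty}(n^{-2}) = -\frac{i\mu_\pm}{n\pi} + \bigo n {+\infty}(n^{-2}),
\]
which is the first claim.

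For the second claim I would simply square: writing $z_{n,\pm}^2 = n^2\nu^2 + 2n\nu w_{n,\pm} + w_{n,\pm}^2$ and using $\nu = \pi/\ell$, the cross term gives $2n\nu \cdot (-i\mu_\pm/(n\pi)) = -2i\mu_\pm/\ell$, while $2n\nu \cdot O(n^{-2}) = O(n^{-1})$ and $w_{n,\pm}^2 = O(n^{-2})$. This produces the announced expansion $z_{n,\pm}^2 = n^2\nu^2 - 2i\mu_\pm/\ell + \mathcal{O}(n^{-1})$.

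There is no real obstacle; the only subtlety is the bootstrap step establishing $w_{n,\pm}\to 0$ and then $w_{n,\pm} = O(n^{-1})$ before Taylor-expanding, which is essential since one cannot a priori linearize the exponential on a disk of fixed size $\nu/6$. Once $w_{n,\pm}$ is known to be small, the rest is a first-order Taylor expansion of an analytic equation.
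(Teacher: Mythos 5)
Your proof is correct and follows essentially the same route as the paper: set $z_{n,\pm}=n\nu+w_{n,\pm}$, use the equation $\phi_\pm(z_{n,\pm})=0$ together with the containment $w_{n,\pm}=O(1)$ from Proposition~\ref{prop:Sigma-disks}, bootstrap to $w_{n,\pm}=O(n^{-1})$, and then Taylor expand to extract the leading term. The paper states the same computation more compactly (expanding $e^{2ir_{n,\pm}\ell}$ directly after noting $r_{n,\pm}\to 0$), whereas you make the bootstrap step more explicit by rearranging $\phi_\pm=0$ into $e^{2iw\ell}-1=2\mu_\pm/(n\nu+w-\mu_\pm)$ and inverting the exponential locally; this is a slightly cleaner presentation of the same argument, and the final algebra is identical.
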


\begin{proof} 
Let $r_{n,\pm}  := z_{n,\pm} - n\nu$. By Proposition \ref{prop:Sigma-disks}, $r_{n,\pm}$ goes to 0 as $n \to +\infty$. Then we compute 
\begin{align*}
0  = \phi_\pm(z_{n,\pm})
& = (n\nu + r_{n,\pm} - \mu_\pm) e^{2ir_{n,\pm} \ell} - (n\nu + r_{n,\pm} + \mu_\pm)\\
& = 2ir_{n,\pm} n\pi - 2\mu_\pm + \Oc(r_{n,\pm}) + \Oc(n r_{n,\pm}^2).
\end{align*}
This proves that 
\[
r_{n,\pm} = - \frac {i\mu_\pm}{n\pi} + \Oc(n^{-2}),
\]
and the conclusions follow.
\end{proof}

\section{Riesz basis property} \label{sec-Riesz-basis}

For $b \in \R$ the operator $\Tc_{0,b}$ is selfadjoint with compact resolvent, so there exists an orthonormal basis of eigenfunctions for $\Tc_{0,b}$.\\

In this section we consider $(a,b) \in \R^* \times \R$ and we prove that there exists a Riesz basis of $\SSS$ made with generalized eigenfunctions of $\Tc = \Tc_{a,b}$. Since $\Tc$ is not selfadjoint, we do not necessarily have a Hilbert basis of eigenvectors. However, we will prove that the generalized eigenvectors are not too far, in a suitable sense, from being orthogonal. We have no control on these vectors for $n$ small, but when $n$ is large we will see that they are in fact close to a family of eigenvectors of $\Tc_{0,0}$ which forms a Riesz basis.\\ 

We recall that a family $(\Psi_k)_{k \in \N^*}$ in $\SSS$ is a Riesz basis if the operator 
\[
\fonc{\ell^2(\N^*)}{\SSS}{(u_k)_{k \in \N^*}}{\sum_{k=1}^\infty u_k \Psi_k}
\]
is well defined, bounded, bijective and boundedly invertible. Then for any $U \in \SSS$ there is a unique sequence $(u_k)_{k\in\N^*} \in \ell^2(\N^*)$ such that $U = \sum_{k=1}^\infty u_k \Psi_k$ and, for some $C \geq 1$ independant of $U$,
\begin{equation} \label{eq:Riesz-basis}
C\inv \sum_{k=1}^\infty \abs{u_k}^2 \leq \nr{U}_\SSS^2 \leq C \sum_{k=1}^\infty \abs{u_k}^2.
\end{equation}
For general results about Riesz bases we refer for instance to \cite{agranovich94}.\\

If $a^2 \neq 4b^2$ then for all $n \geq n_0$ ($n_0 \in \N^*$ given by Proposition \ref{prop:Sigma-disks}) we set 
\begin{equation} \label{def:Phi-neq}
\Phi_{2n+1} = A_- e_{z_{n,-}} \quad \text{and} \quad \Phi_{2n+2} = A_+ e_{z_{n,+}},
\end{equation}
where 
\begin{equation} \label{def:A}
A_\pm = \begin{pmatrix} \mu_\pm \\ -b \end{pmatrix} \in \ker(\Tc-\mu_\pm)
\end{equation}
(we omit the subscripts $a,b$). Then $\Phi_{2n+1},\Phi_{2n+2} \in \Dom(\Tc)$, $\Tc \Phi_{2n+1} = z_{n,-}^2 \Phi_{2n+1}$ and $\Tc \Phi_{2n+2} = z_{n,+}^2 \Phi_{2n+2}$.

Similarly, if $a^2 = 4b^2$ we set for $n \geq n_0$
\begin{equation} \label{def:Phi-eq}
\Phi_{2n+1} = A_1 e_{z_n}, \quad \Phi_{2n+2} = A_1 \tilde e_{z_n} + A_{2,n} e_{z_n}, 
\end{equation}
where 
\[
A_1 = \begin{pmatrix} \frac a 2 \\ - b \end{pmatrix}  \quad \text{and} \quad A_{2,n} = \begin{pmatrix} \eta(z_n)  \\ 0 \end{pmatrix}.
\]
We recall that the parameter $\eta(z_n)$ is defined in \eqref{def:eta} ($\mu_+ = \mu_-$ in this case). We have 
\begin{equation} \label{eq:lim-A2}
A_{2,n} = A_{2,\infty} + \Oc(n\inv), \quad \text{where} \quad  A_{2,\infty} =  \begin{pmatrix} \frac {i\ell} 2 \\ 0 \end{pmatrix}.
\end{equation}
In particular, choosing $n_0$ larger if necessary, we can assume that $\eta(z_n) \neq 0$ for $n \geq n_0$. Then we have $\Phi_{2n+1},\Phi_{2n+2} \in \Dom(\Tc)$, $\Tc \Phi_{2n+1} = z_n^2 \Phi_{2n+1}$ and $\Tc \Phi_{2n+2} = z_n^2 \Phi_{2n+2} + \Phi_{2n+1}$.

In both cases, we also consider a basis $(\Phi_k)_{1 \leq k \leq 2n_0}$ of generalized eigenfunctions for the subspace of $\SSS$ spaned by the generalized eigenspaces corresponding to the eigenvalues of $\Tc$ in $D(0,R^2)$ (see Proposition \ref{prop:multiplicities-high-freq}).

All this defines a family $(\Phi_k)_{k \in \N^*}$ of generalized eigenfunctions for $\Tc$. Our purpose is to prove that this is a Riesz basis of $\SSS$. Notice that we could normalize these vectors, but this is not necessary (\eqref{eq:norm-Phi} below is enough).\\

We begin with the following computation.

\begin{lemma} 
\begin{enumerate}[\rm(i)]

\item We have 
\begin{equation} \label{eq:norm-ez}
\nr{e_z}_{L^2(0,\ell)}^2 \xrightarrow[\substack{\abs{\Re(z)} \to +\infty \\ \Im(z) \to 0}]{} 2\ell.
\end{equation}

\item There exists $c > 0$ such that for $z \in \C$ with $\abs{\Im(z)} \leq 1$ we have 
\begin{equation} \label{eq:norm-etildez}
\|\tilde e_z\|_{L^2(0,\ell)} \leq  \frac c {\abs z}.
\end{equation}

\item For $z,\z \in \C$ with $z \neq \pm \bar \z$ we have 
\begin{equation} \label{eq:innp-ez-ez}
\innp{e_z}{e_\z}_{L^2(0,\ell)} = \frac {e^{2i(z-\bar \z)\ell}-1}{i(z-\bar \z)} + \frac {e^{2iz\ell} - e^{-2i\bar \z \ell}}{i(z+\bar \z)}.
\end{equation}
and
\begin{multline} \label{eq:innp-ez-tilde-ez}
\innp{e_z}{\tilde e_\z}_{L^2(0,\ell)}  =\\
-\frac {\ell(e^{2i(z-\bar \z)\ell}+1)}{2 \bar \z (z-\bar \z)} 
+ \frac {e^{2i(z-\bar \z)\ell} - 1}{2i\bar \z (z-\bar \z)^2} 
+ \frac {\ell(e^{2iz\ell}+e^{-2i\bar \z\ell})}{2\bar \z (z+\bar \z)} 
- \frac {e^{2iz\ell} - e^{-2i\bar \z \ell}}{2i\bar \z (z+\bar \z)^2}.
\end{multline}
\end{enumerate}\label{lem:computation-ez}
\end{lemma}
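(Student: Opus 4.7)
The plan is to establish each of the three parts by direct integration; no conceptual tool beyond the standard antiderivatives is required.

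For part (iii), I will expand $e_z(x)\overline{e_\zeta(x)}$ into the four exponentials $e^{i(z-\bar\zeta)x}$, $e^{-i(z-\bar\zeta)x}$ (multiplied by $e^{2iz\ell}e^{-2i\bar\zeta\ell}$), $e^{i(z+\bar\zeta)x}$ (multiplied by $e^{-2i\bar\zeta\ell}$), and $e^{-i(z+\bar\zeta)x}$ (multiplied by $e^{2iz\ell}$), and integrate using
\[
\int_0^\ell e^{i\g x}\,dx = \frac{e^{i\g\ell}-1}{i\g}.
\]
The hypothesis $z \neq \pm \bar \zeta$ ensures that the denominators $z-\bar\zeta$ and $z+\bar\zeta$ do not vanish. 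Grouping the two terms with denominator $(z-\bar\zeta)$ and the two with denominator $(z+\bar\zeta)$ and simplifying the resulting exponentials gives \eqref{eq:innp-ez-ez}. For the second identity, the same expansion applies to $e_z(x)\overline{\tilde e_\zeta(x)}$, but now with an extra factor $(\ell-x)/(-2i\bar\zeta)$. The four resulting integrals are computed via integration by parts:
\[
\int_0^\ell (\ell-x) e^{i\g x}\, dx = \frac{i\ell}\g - \frac{e^{i\g\ell}-1}{\g^2}.
\]
Regrouping as before yields \eqref{eq:innp-ez-tilde-ez}; the only obstacle is the careful bookkeeping of the four cross terms and the signs produced by conjugating $\tilde e_\zeta$.

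For part (i), the cleanest route is a direct computation. Writing $z = s + i\t$ with $\t$ small, one checks by expansion that
\[
|e_z(x)|^2 = e^{-2\t x} + e^{-2\t(2\ell - x)} + 2 e^{-2\t\ell} \cos\bigl(2s(x-\ell)\bigr).
\]
Integrating on $[0,\ell]$, the sum of the first two contributions equals $(1-e^{-4\t\ell})/(2\t)$, which tends to $2\ell$ as $\t \to 0$; the third contribution equals $e^{-2\t\ell}\sin(2s\ell)/s$, which tends to $0$ as $|s| \to +\infty$ (uniformly for $\t$ bounded). This gives the limit $2\ell$. Alternatively, one could deduce (i) from (iii) applied with $\zeta = z$ and an analysis of the same two terms, but the direct computation is simpler.

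For part (ii), a pointwise bound suffices. For $|\Im z| \leq 1$ and $x \in [0,\ell]$, both $|e^{izx}|$ and $|e^{2iz\ell} e^{-izx}| = |e^{iz(2\ell-x)}|$ are bounded by $e^{2\ell}$, so from the definition \eqref{def:tilde-ez} we get $|\tilde e_z(x)| \leq \ell e^{2\ell}/|z|$. Taking the $L^2$ norm on $[0,\ell]$ then yields \eqref{eq:norm-etildez} with $c = \ell^{3/2} e^{2\ell}$. This is the easiest of the three parts; the main effort of the lemma is the clean regrouping in part (iii).
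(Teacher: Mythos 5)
Your proof is correct and follows essentially the same route as the paper: all three parts are handled by elementary direct integration, with (iii) obtained by expanding into exponentials and (ii) by a pointwise bound from the definition of $\tilde e_z$. The only cosmetic difference is that the paper deduces (i) by taking $\z = z$ in (iii), which produces exactly the two terms you compute directly, an alternative you already acknowledge.
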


\begin{proof}
We check \eqref{eq:innp-ez-ez} and \eqref{eq:innp-ez-tilde-ez} by direct computation. If $z \notin \R$ we can take $\z = z$ in \eqref{eq:innp-ez-ez}. This gives 
\[
\nr{e_z}_{L^2(0,\ell)}^2 =  e^{-2 \Im(z) \ell} \left( \frac {\sinh(2 \Im(z)\ell)}{\Im(z)}+ \frac { \sin\big( 2 \Re(z) \ell \big)}{\Re(z)} \right),
\]
and \eqref{eq:norm-ez} follows. We get the same conclusion if $z \in \R$. Finally, \eqref{eq:norm-etildez} is clear from the definition of $\tilde e_z$.
\end{proof}

We can first consider separately the vectors $\Phi_k$ for $1\leq k \leq 2n_0$, and then the vectors $\Phi_{2n+1}$ and $\Phi_{2n+2}$ for each $n \geq n_0$. It is obvious that a finite family of linearly independant vectors is a Riesz basis of the finite dimensional subspace that it spans. The interest of the following proposition is that we can take the same constant for each of these finite subfamilies of eigenfunctions.

\begin{proposition} \label{prop:intermediate-bases}
There exists $C > 0$ such that for all $(u_k)_{k \in \N^*} \in \ell^2(\N^*)$ we have 
\[
C\inv \sum_{k=1}^{2n_0} \abs{u_k}^2 \leq \nr{U_0}_{\SSS}^2 \leq C \sum_{k=1}^{2n_0} \abs{u_k}^2
\]
and 
\[
C\inv \big( \abs{u_{2n+1}}^2 + \abs{u_{2n+2}}^2 \big) \leq \nr{U_n}_{\SSS}^2 \leq C \big( \abs{u_{2n+1}}^2 + \abs{u_{2n+2}}^2 \big),
\]
where we have set $U_0 = \sum_{k=1}^{2n_0} u_k \Phi_k$ and, for $n \geq n_0$, $U_n = u_{2n+1} \Phi_{2n+1} + u_{2n+2} \Phi_{2n+2}$.
\end{proposition}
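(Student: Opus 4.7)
The estimates asserted in the proposition are equivalent to uniform matrix inequalities $C^{-1} I \leq G_n \leq C I$ in the Hermitian order, where $G_n = (\langle \Phi_{2n+j}, \Phi_{2n+k}\rangle_\SSS)_{1 \leq j, k \leq 2}$ for $n \geq n_0$, and $G_0 = (\langle \Phi_j, \Phi_k \rangle_\SSS)_{1 \leq j, k \leq 2n_0}$. The matrix $G_0$ is a fixed positive definite matrix since $(\Phi_k)_{1 \leq k \leq 2n_0}$ is, by construction, a basis of the $2n_0$-dimensional spectral subspace attached to the eigenvalues of $\Tc$ in $D(0,R^2)$; it contributes a single constant. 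The real content of the proposition is therefore the existence of a constant that works uniformly for the whole family $(G_n)_{n \geq n_0}$.

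The plan is to prove that $G_n \to G_\infty$ as $n \to +\infty$, with $G_\infty$ positive definite, and then conclude by continuity together with a finite-dimensional argument. From Proposition \ref{prop:DL}, $z_{n,\pm} = n\nu + \Oc(n^{-1})$ with $\Im(z_{n,\pm}) = \Oc(n^{-1})$, so Lemma \ref{lem:computation-ez}(i) gives $\|e_{z_{n,\pm}}\|_{L^2}^2 \to 2\ell$. Substituting $z = z_{n,-}$, $\zeta = z_{n,+}$ into \eqref{eq:innp-ez-ez} and Taylor expanding (using $\phi_\pm(z_{n,\pm}) = 0$ to get $e^{2iz_{n,\pm}\ell} = 1 + 2\mu_\pm \ell/(n\pi) + \Oc(n^{-2})$) yields $\langle e_{z_{n,-}}, e_{z_{n,+}}\rangle_{L^2} \to 2\ell$ as well. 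When $a^2 \neq 4b^2$, this gives $G_n \to G_\infty = 2\ell\,\big(\langle A_i, A_j\rangle_{\C^2}\big)_{i,j \in \{-,+\}}$. Since $\mu_+ \neq \mu_-$, the vectors $A_\pm \in \ker(M - \mu_\pm)$ are linearly independent in $\C^2$, and $G_\infty$ is positive definite.

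In the case $a^2 = 4b^2$ (for which the standing assumption $a \neq 0$ forces $b \neq 0$), Lemma \ref{lem:computation-ez}(ii) provides $\|\tilde e_{z_n}\|_{L^2} = \Oc(n^{-1})$; by Cauchy--Schwarz, $\langle e_{z_n}, \tilde e_{z_n}\rangle_{L^2} = \Oc(n^{-1})$. Combined with $A_{2,n} \to A_{2,\infty}$ from \eqref{eq:lim-A2}, one obtains $G_n \to G_\infty = 2\ell\,\big(\langle B_i, B_j\rangle_{\C^2}\big)_{i,j \in \{1,2\}}$ with $B_1 = A_1$ and $B_2 = A_{2,\infty}$. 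Since $A_1$ has second coordinate $-b \neq 0$ while $A_{2,\infty}$ has second coordinate $0$, these two vectors are linearly independent and $G_\infty$ is again positive definite.

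Once $G_n \to G_\infty$ is established with $G_\infty$ positive definite, continuity of the matrix eigenvalues produces $n_1 \geq n_0$ and $C_\infty \geq 1$ such that $C_\infty^{-1} I \leq G_n \leq C_\infty I$ for every $n \geq n_1$. Each of the finitely many matrices $G_n$ with $n_0 \leq n < n_1$ is positive definite because $\Phi_{2n+1}, \Phi_{2n+2}$ are linearly independent (Proposition \ref{prop:eq-z} ensures this, whether they correspond to two distinct eigenvalues or to a genuine Jordan chain), so a common constant is obtained by taking the maximum. Combined with the constant coming from $G_0$, this yields the desired uniform $C$. The main technical point is the asymptotic analysis via Lemma \ref{lem:computation-ez}: one must in particular notice that the off-diagonal entry $\langle e_{z_{n,-}}, e_{z_{n,+}}\rangle_{L^2}$ does \emph{not} tend to zero, so the positive definiteness of $G_\infty$ has to be recovered from the $\C^2$-factors $A_\pm$ (or $A_1, A_{2,\infty}$) and not from any asymptotic orthogonality of the scalar basis functions.
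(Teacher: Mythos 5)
Your proposal is correct, and it is a genuine reformulation rather than a restatement of the paper's argument. You reduce the statement to the assertion that the $2\times 2$ Gram matrices $G_n$ are uniformly bounded and uniformly positive definite, and you establish this by proving convergence $G_n \to G_\infty$ with $G_\infty$ positive definite (via the explicit computation $\innps{e_{z_{n,-}}}{e_{z_{n,+}}}_{L^2} \to 2\ell$ from \eqref{eq:innp-ez-ez} and Proposition \ref{prop:DL}), followed by a continuity-plus-compactness step to cover the finitely many indices $n_0 \leq n < n_1$. The paper proceeds differently at the crucial point: it never computes the off-diagonal limit. Instead it exploits the tensor structure $\Phi_{2n+1} = A_- e_{z_{n,-}}$, $\Phi_{2n+2} = A_+ e_{z_{n,+}}$ to write $\abs{\innps{\Phi_{2n+1}}{\Phi_{2n+2}}} = \abs{\innps{A_-}{A_+}_{\C^2}}\,\abs{\innps{e_{z_{n,-}}}{e_{z_{n,+}}}_{L^2}}$, applies Cauchy--Schwarz to the $L^2$ factor, and uses non-colinearity of $A_\pm$ in $\C^2$ to obtain a uniform angle bound $\abs{\innps{\Phi_{2n+1}}{\Phi_{2n+2}}} \leq (1-\e)\nrs{\Phi_{2n+1}}\nrs{\Phi_{2n+2}}$ valid for \emph{every} $n \geq n_0$ at once, with the asymptotics entering only through the two-sided bound \eqref{eq:norm-Phi} on the diagonal terms. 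Your route costs a little more computation (the off-diagonal limit) and one extra compactness step, but yields a cleaner conceptual picture ($G_n \to G_\infty \succ 0$); the paper's route is more economical because Cauchy--Schwarz shortcuts the off-diagonal analysis. Both hinge on the same mechanism, which you correctly identify: positive definiteness comes entirely from the $\C^2$-factors ($A_\pm$, resp.\ $A_1, A_{2,\infty}$), not from any asymptotic orthogonality of $e_{z_{n,-}}$ and $e_{z_{n,+}}$, which in fact become asymptotically parallel.
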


\begin{proof}
\stepp The vectors $(\Phi_1,\dots,\Phi_{2n_0})$ are linearly independant by definition, and we see directly that for $n \geq n_0$ the vectors $\Phi_{2n+1}$ and $\Phi_{2n+2}$ are not colinear. It remains to check that we have the estimates with a constant independant of $n$.

\stepp Assume that $a^2 \neq 4b^2$. By Lemma \ref{lem:computation-ez} we have
\[
\nr{\Phi_{2n+1}}_\SSS^2 = \abs{A_-}^2_{\C^2} \nr{e_{z_{n,-}}}_{L^2(0,\ell)}^2 \limt n {+\infty} 2\ell \abs{A_-}_{\C^2}^2,
\]
and similarly $\nr{\Phi_{2n+2}}_\SSS^2$ goes to $2\ell \abs{A_+}_{\C^2}^2$. Thus there exists $C_1 \geq 1$ such that, for all $k \in \N^*$,
\begin{equation} \label{eq:norm-Phi}
C_1\inv \leq \nr{\Phi_k}_\SSS^2 \leq C_1.
\end{equation}
In particular,
\begin{equation}
\begin{aligned}
\nr{U_n}_\SSS^2 
& \leq 2 \abs{u_{2n+1}}^2 \nr{\Phi_{2n+1}}_\SSS^2 + 2 \abs{u_{2n+2}}^2 \nr{\Phi_{2n+2}}_\SSS^2\\
& \leq 2C_1 \big( \abs{u_{2n+1}}^2 + \abs{u_{2n+2}}^2 \big).
\end{aligned}
\end{equation}

\stepp Since $A_-$ and $A_+$ are not colinear in $\C^2$ there exists $\e > 0$ such that 
\[
\abs{\innp{A_-}{A_+}_{\C^2}} \leq (1-\e) \abs{A_-}_{\C^2}\abs{A_+}_{\C^2}.
\]
Then
\begin{align*}
\abs{\innp{\Phi_{2n+1}}{\Phi_{2n+2}}_\SSS}
& = \abs{\innp{A_-}{A_+}_{\C^2}} \abs{\innp{e_{z_{n,-}}}{e_{z_{n,+}}}_{L^2(0,\ell)}}\\
& \leq (1-\e)\abs{A_-}_{\C^2}\abs{A_+}_{\C^2} \nr{e_{z_{n,-}}}_{L^2(0,\ell)} \nr{e_{z_{n,+}}}_{L^2(0,\ell)}\\
& \leq (1-\e) \nr{\Phi_{2n+1}}_\SSS \nr{\Phi_{2n+2}}_\SSS,
\end{align*}
and hence 
\begin{align*}
\nr{U_n}_\SSS^2 
& \geq \abs{u_{2n+1}}^2 \nr{\Phi_{2n+1}}_\SSS^2 + \abs{u_{2n+2}}^2 \nr{\Phi_{2n+2}}_\SSS^2 \\
& \quad - 2 \abs{u_{2n+1}} \abs{u_{2n+2}} \abs{\innp{\Phi_{2n+1}}{\Phi_{2n+2}}_\SSS} \\
& \geq \e \big( \abs{u_{2n+1}}^2 \nr{\Phi_{2n+1}}_\SSS^2 + \abs{u_{2n+2}}^2 \nr{\Phi_{2n+2}}_\SSS^2 \big)\\
& \geq \e C_1\inv \big( \abs{u_{2n+1}}^2 + \abs{u_{2n+2}}^2 \big).
\end{align*}
The proof is complete if $a^2 \neq 4b^2$.

\stepp Now assume that $a^2 = 4b^2$. By Lemma \ref{lem:computation-ez} and \eqref{eq:lim-A2} we have 
\[
\nr{\Phi_{2n+1}}_\SSS^2 \limt n {+\infty} 2\ell \abs{A_1}_{\C^2}^2, \quad \nr{\Phi_{2n+2}}_\SSS^2 \limt n {+\infty} 2\ell \abs{A_{2,\infty}}_{\C^2}^2.
\]
Since $A_1$ and $A_{2,\infty}$ are not colinear, there exists $\e > 0$ such that for $n$ large enough we have $\abs{\innp{A_1}{A_{2,\infty}}} \leq (1-\e) \abs{A_1}\abs{A_{2,\infty}}$. We can proceed as in the previous case.
\end{proof}

Now we consider pairs of generalized eigenfunctions which are not associated to the same subfamilies of eigenfunctions.\\

We set $\SSS_0 = \mathsf{span}(\Phi_1,\dots,\Phi_{2n_0})$ and, for $n \geq n_0$, $\SSS_n = \mathsf{span}(\Phi_{2n+1},\Phi_{2n+2})$.

\begin{proposition} \label{prop:innp-SSSn}
There exists $C > 0$ such that for $n \geq n_0$, $m \in \N^*$, $U_n \in \SSS_n$ and $U_{n+m} \in \SSS_{n+m}$ we have 
\[
\abs{\innp{U_n}{U_{n+m}}_\SSS} \leq \frac {C \nr{U_n}_\SSS \nr{U_{n+m}}_\SSS}{nm}.
\] 
\end{proposition}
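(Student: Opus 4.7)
The plan is to reduce, by sesquilinearity and the norm bounds from Proposition~\ref{prop:intermediate-bases}, to estimating each of the four pairwise inner products $\innp{\Phi_{2n+i}}{\Phi_{2(n+m)+j}}_\SSS$, $i,j \in \{1,2\}$, by $\frac{C}{nm}$. Indeed, writing $U_n = u_{2n+1}\Phi_{2n+1} + u_{2n+2}\Phi_{2n+2}$ and $U_{n+m} = u_{2(n+m)+1}\Phi_{2(n+m)+1} + u_{2(n+m)+2}\Phi_{2(n+m)+2}$, such an estimate yields
\[
\abs{\innp{U_n}{U_{n+m}}_\SSS} \leq \frac{C}{nm} \sum_{i,j} \abs{u_{2n+i}} \abs{u_{2(n+m)+j}} \leq \frac{2C}{nm}\bigl(\abs{u_{2n+1}}^2 + \abs{u_{2n+2}}^2\bigr)^{1/2}\bigl(\abs{u_{2(n+m)+1}}^2 + \abs{u_{2(n+m)+2}}^2\bigr)^{1/2},
\]
and the lower bounds from Proposition~\ref{prop:intermediate-bases} then convert the right-hand side into $\frac{C'}{nm}\nr{U_n}_\SSS\nr{U_{n+m}}_\SSS$.

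To estimate the pairwise inner products, I would first exploit the defining equations $\phi_\pm(z_{n,\pm}) = 0$, which give $e^{2iz_{n,\pm}\ell} = 1 + \frac{2\mu_\pm}{z_{n,\pm}-\mu_\pm} = 1 + \Oc(n^{-1})$ by Proposition~\ref{prop:DL}. Writing $z = z_{n,\s}$ and $\z = z_{n+m,\s'}$ for any choice of signs $\s,\s' \in \{-,+\}$, this yields simultaneously
\[
e^{2iz\ell} - e^{-2i\bar\z \ell} = \Oc(n^{-1}), \qquad e^{2i(z-\bar\z)\ell} - 1 = \Oc(n^{-1}).
\]
Combined with the asymptotics $z - \bar\z = -m\nu + \Oc(n^{-1})$ and $z + \bar\z = (2n+m)\nu + \Oc(n^{-1})$ (hence $\abs{z-\bar\z} \gtrsim m\nu$ and $\abs{z+\bar\z} \gtrsim (2n+m)\nu$ for $n$ large enough, uniformly in $m \geq 1$), the formula \eqref{eq:innp-ez-ez} shows directly that
\[
\abs{\innp{e_z}{e_\z}_{L^2(0,\ell)}} \leq \frac{C/n}{m\nu} + \frac{C/n}{(2n+m)\nu} \leq \frac{C'}{nm},
\]
using that $\frac{1}{n(2n+m)} \leq \frac{1}{nm}$. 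In the case $a^2 \neq 4b^2$, this is enough, since the $\Phi_{2n+i}$ are scalar multiples of $e_{z_{n,\pm}}$ with uniformly bounded coefficients.

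When $a^2 = 4b^2$, the vectors $\Phi_{2n+2}$ also contain a $A_1 \tilde e_{z_n}$ contribution, so I also need to estimate $\innp{e_z}{\tilde e_\z}$ and $\innp{\tilde e_z}{\tilde e_\z}$. For the cross term, formula \eqref{eq:innp-ez-tilde-ez} has four contributions: those whose numerators contain $e^{2iz\ell} \pm e^{-2i\bar\z\ell}$ (of size $\Oc(1)$ or $\Oc(1/n)$) divided by $\bar\z(z \pm \bar\z)$ or $\bar\z(z\pm\bar\z)^2$; each of them is at worst $\Oc(\frac{1}{(n+m)m}) = \Oc(\frac{1}{nm})$. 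For $\innp{\tilde e_z}{\tilde e_\z}$, the Cauchy--Schwarz estimate with \eqref{eq:norm-etildez} already gives $\Oc(\frac{1}{n(n+m)}) \leq \Oc(\frac{1}{nm})$. The coefficients $A_{2,n}$ are uniformly bounded by \eqref{eq:lim-A2}, so the conclusion follows in this case as well.

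The delicate point, and the only place where anything really has to be checked, is the gain of the factor $n^{-1}$ in $e^{2i(z-\bar\z)\ell}-1$: without the relation $\phi_\pm(z_{n,\pm}) = 0$ this quantity would only be $\Oc(1)$, producing just an $\Oc(1/m)$ bound. Thus the eigenvalue equation is really what drives the proof, while the rest is bookkeeping on the explicit formulas of Lemma~\ref{lem:computation-ez}.
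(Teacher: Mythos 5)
Your proposal is correct and follows essentially the same route as the paper: reduce via the uniform coefficient bounds of Proposition~\ref{prop:intermediate-bases} to the pairwise inner products, estimate $\innp{e_z}{e_\z}$ (and, when $a^2=4b^2$, the terms involving $\tilde e_z$) using the explicit formulas of Lemma~\ref{lem:computation-ez} together with the localization/asymptotics of $z_{n,\pm}$, and bound $\innp{\tilde e_{z_n}}{\tilde e_{z_{n+m}}}$ by Cauchy--Schwarz with \eqref{eq:norm-etildez}. Deriving the $\Oc(n^{-1})$ smallness of $e^{2iz\ell}-1$ from the relation $\phi_\pm(z_{n,\pm})=0$ rather than directly from Proposition~\ref{prop:DL} is only a cosmetic variation, so no further comment is needed.
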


\begin{proof}
We begin with the case $a^2 \neq 4b^2$. We apply \eqref{eq:innp-ez-ez} with $z = z_{n,-}$ and $\z = z_{n+m,-}$. With Proposition \ref{prop:DL} we get
\[
\abs{\innp{\Phi_{2n+1}}{\Phi_{2(n+m)+1}}}  = \abs{A_-}^2  \abs{\big<e_{z_{n,-}} , e_{z_{n+m,-}} \big>_{L^2(0,\ell)}} \lesssim \frac {1}{nm}
\]
and then 
\[
\abs{\innp{u_{2n+1} \Phi_{2n+1}}{ u_{2(n+m)+1} \Phi_{2(n+m)+1}}} 
%\lesssim \frac {\abs{u_{2n+1}}\abs{u_{2(n+m)+1}}}{nm} 
\lesssim  \frac {\nr{U_n}_\SSS \nr{U_{n+m}}_\SSS}{nm}.
\]

We estimate similarly $\abs{\innp{\Phi_{2n+1}}{\Phi_{2m+2}}}$, $\abs{\innp{\Phi_{2n+2}}{\Phi_{2m+1}}}$ and $\abs{\innp{\Phi_{2n+2}}{\Phi_{2m+2}}}$, and the conclusion follows in this case.
For the case $a^2 = 4b^2$ we proceed similarly, now using \eqref{eq:innp-ez-ez}, \eqref{eq:innp-ez-tilde-ez} and the fact that by \eqref{eq:norm-etildez} we have 
\[
\abs {\innp{\tilde e_{z_n}}{\tilde e_{z_{n+m}}}} \lesssim \frac 1 {n(n+m)} \lesssim \frac 1 {nm}. \qedhere
\]
\end{proof}

To prove that the family $(\Phi_k)_{k \in \N}$ is a Riesz basis, we compare it with a family of eigenfunctions for the model operator $\Tc_0$. 

Assume that $a^2 \neq 4b^2$. For $n \in \N$ we set 
\begin{equation} \label{def:Phi-0}
\Phi_{2n+1}^0  = A_- e_{n\nu} \quad \text{and} \quad \Phi_{2n+2}^0  = A_+ e_{n\nu}.
\end{equation}
Since $(A_-,A_+)$ is a basis of $\C^2$, there exists $C_0 \geq 1$ such that for $V \in \C^2$ and the unique $(v_-,v_+) \in \C^2$ such that $V = v_- A_- + v_+ A_+$ we have 
\[
C_0 \inv \big( \abs{v_-}^1 + \abs{v_+}^2 \big) \leq \nr{V}_{\C^2}^2 \leq C_0 \big( \abs{v_-}^1 + \abs{v_+}^2 \big).
\]
Let $U \in \SSS$. There exist $u_- ,u_+ \in L^2(0,\ell)$ unique such that for almost all $y \in ]0,\ell[$
\[
U(y) = u_-(y) A_- + u_+(y) A_+.
\]
Since $(e_{n\nu})_{n \in \N}$ is an orthonormal basis of $L^2(0,\ell)$, there exist unique sequences $(u_n^-)_{n \in \N}$ and $(u_n^+)_{n \in \N}$ in $\ell^2(\N^*)$ such that 
\[
U(y) = \sum_{n \in \N} u_n^- e_{n\nu}(y) A_- + \sum_{n \in \N} u_n^+ e_{n\nu}(y) A_+.
\]
Moreover,
\[
C_0\inv \sum_{n \in \N} \big( \abs{u_n^-}^2 + \abs{u_n^+}^2 \big) \leq \nr{U}_\SSS^2 \leq C_0 \sum_{n \in \N} \big( \abs{u_n^-}^2 + \abs{u_n^+}^2 \big).
\]
This means that the family $(\Phi_k^0)_{k \in \N^*}$ is a Riesz basis of $\SSS$.
Now assume that $a^2 = 4b^2$. Then we get the same conclusion if we set 
\begin{equation} \label{def:Phi-0-eq}
\Phi_{2n+1}^0  = A_1 e_{n\nu} \quad \text{and} \quad \Phi_{2n+2}^0  = A_{2,\infty} e_{n\nu}.
\end{equation}

\begin{proposition} \label{prop:Riesz-basis}
The family $(\Phi_k)_{k \in \N^*}$ is a Riesz basis of $\SSS$.
\end{proposition}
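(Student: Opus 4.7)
The plan is to compare the family $(\Phi_k)_{k \in \N^*}$ with the reference Riesz basis $(\Phi_k^0)_{k \in \N^*}$ from \eqref{def:Phi-0} (or \eqref{def:Phi-0-eq} when $a^2 = 4b^2$) and to conclude by a Bari-type perturbation principle. I would define $T \in \Lc(\SSS)$ by requiring $T \Phi_k^0 = \Phi_k$ for every $k$. Once the quadratic closeness
\[
\sum_{k \geq 1} \nr{\Phi_k - \Phi_k^0}_\SSS^2 < \infty
\]
is established, the Riesz basis property of $(\Phi_k^0)$ implies that $T - \Id$ extends to a Hilbert-Schmidt operator on $\SSS$, so $T = \Id + K$ with $K$ compact. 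The Fredholm alternative then reduces the whole proof to showing that $T$ is injective, since $(\Phi_k) = (T \Phi_k^0)$ will be the image of a Riesz basis under a boundedly invertible operator, and therefore a Riesz basis itself.

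For the quadratic closeness the main input is Proposition \ref{prop:DL}: $z_{n,\pm} - n\nu = \Oc(n^{-1})$, which gives both $e^{2iz_{n,\pm}\ell} - 1 = \Oc(n^{-1})$ and $\nr{e_{z_{n,\pm}} - e_{n\nu}}_{L^\infty(0,\ell)} = \Oc(n^{-1})$. When $a^2 \neq 4b^2$ this immediately yields $\nr{\Phi_{2n+1} - \Phi_{2n+1}^0}_\SSS + \nr{\Phi_{2n+2} - \Phi_{2n+2}^0}_\SSS = \Oc(n^{-1})$. When $a^2 = 4b^2$ the same estimate handles $\Phi_{2n+1}$, while for $\Phi_{2n+2}$ one combines it with the decay $\nr{\tilde e_{z_n}}_{L^2} = \Oc(n^{-1})$ from Lemma \ref{lem:computation-ez} together with the expansion $A_{2,n} - A_{2,\infty} = \Oc(n^{-1})$ from \eqref{eq:lim-A2}. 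The first $2n_0$ terms contribute only a bounded amount, so the whole series converges.

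To prove injectivity of $T$, suppose $u = \sum_k c_k \Phi_k^0 \in \SSS$ satisfies $Tu = \sum_k c_k \Phi_k = 0$. For $n \geq n_0$ let $P_n$ denote the Riesz spectral projector of $\Tc$ associated with the eigenvalues in $D(n^2\nu^2, n\nu^2/2)$, and let $P_0$ be the projector associated with the eigenvalues in $D(0,R^2)$; each $P_n$ is a bounded projector onto a finite-dimensional generalized eigenspace of $\Tc$. By construction $P_n \Phi_k = \Phi_k$ whenever the index $k$ belongs to the corresponding spectral group $I_n$, and $P_n \Phi_k = 0$ otherwise, since Riesz projectors attached to disjoint spectral subsets annihilate one another. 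Applying $P_n$ to the convergent identity $\sum c_k \Phi_k = 0$ therefore leaves $\sum_{k \in I_n} c_k \Phi_k = 0$, a finite sum of linearly independent vectors by Proposition \ref{prop:intermediate-bases}, which forces $c_k = 0$ for all $k \in I_n$. Running this over every $n$ yields $u = 0$, so $T$ is injective and the Fredholm alternative gives the conclusion.

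The main obstacle is the Jordan case $a^2 = 4b^2$, where $\Phi_{2n+2}$ carries the generalized-eigenfunction term $A_1 \tilde e_{z_n}$ in addition to $A_{2,n} e_{z_n}$; the decay $\nr{\tilde e_{z_n}}_{L^2} = \Oc(n^{-1})$ provided by Lemma \ref{lem:computation-ez} is precisely tailored to preserve quadratic closeness in this degenerate situation.
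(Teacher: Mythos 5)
Your proof is correct, and it takes a leaner route than the paper, essentially re-deriving a Bari-type theorem (quadratic closeness to a Riesz basis plus $\omega$-linear independence). The paper builds the synthesis map $\Th : (u_k) \mapsto \sum_k u_k \Phi_k$ directly and proves its boundedness and lower bound from the almost-orthogonality of the spectral subspaces $\SSS_n$ (Proposition \ref{prop:innp-SSSn} together with the uniform two-sided bounds of Proposition \ref{prop:intermediate-bases}, via the estimate \eqref{eq:almost-orthogonal} and a compactness/contradiction argument), and only then uses the comparison with $(\Phi_k^0)$ and a Fredholm argument to get surjectivity. You instead let the quadratic closeness $\sum_k \nr{\Phi_k-\Phi_k^0}_\SSS^2<\infty$ (which is exactly the estimate the paper uses in its last step, coming from Proposition \ref{prop:DL}, \eqref{eq:lim-A2} and \eqref{eq:norm-etildez}) do all the work: it gives at once that $T=\Id+K$ is bounded with $K$ compact, and injectivity via the Riesz projectors $\Pi_n$ — the same projector argument as in the paper — then yields bounded invertibility by the Fredholm alternative, so $(\Phi_k)=(T\Phi_k^0)$ is a Riesz basis. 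What your approach buys is economy: Proposition \ref{prop:innp-SSSn} and the uniform constants of Proposition \ref{prop:intermediate-bases} become unnecessary (you only need the plain linear independence of the vectors within each spectral group, so that $\Pi_n(Tu)=0$ forces the corresponding coefficients to vanish); what the paper's route buys is the explicit almost-orthogonality of the family, which is of independent quantitative interest but not needed for the bare Riesz-basis statement. One small presentational point: you should define $T$ first on finite linear combinations of the $\Phi_k^0$ and note that the quadratic-closeness bound, together with the lower frame inequality for $(\Phi_k^0)$, lets it extend boundedly — which is implicit in your Hilbert--Schmidt claim and is indeed correct.
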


\begin{proof}
\stepp We consider the map 
\[
\Th : \fonc{\ell^2(\N^*)}{\SSS} {(u_k)_{k \in \N}}{\sum_{k=1}^{+\infty} u_k \Phi_k}
\]
and we prove that $\Th$ is well defined, continuous, bijective and has continuous inverse.

\stepp  Let $(U_n)_{n \in \Nc}$ be a sequence in $\SSS$ with $U_n \in \SSS_n$ for all $n \in \Nc =\set 0 \cup \{n_0,n_0+1,\dots\}$ and $\sum_{n \in \Nc} \nr{U_n}_\SSS^2 < +\infty$. For $N \geq n_0$ and $p \in \N$ we have
\[
\nr{\sum_{n=N}^{N+p} U_n}_\SSS^2 - \sum_{n=N}^{N+p} \nr{U_n}_\SSS^2 = \sum_{n=N}^{N+p} \sum_{m=1}^{N+p-n} 2 \Re \innp{U_n}{U_{n+m}}_\SSS,
\]
so by Proposition \ref{prop:innp-SSSn}
\begin{equation} \label{eq:almost-orthogonal}
\begin{aligned}
\abs{\nr{\sum_{n=N}^{N+p} U_n}_\SSS^2 - \sum_{n=N}^{N+p} \nr{U_n}_\SSS^2}
& \lesssim  \sum_{n=N}^{+\infty} \frac {\nr{U_n}_\SSS}{n} \sum_{m=1}^{+\infty} \frac {\nr{U_{n+m}}_\SSS} {m}\\
& \lesssim \sqrt{\sum_{n = N}^{+\infty} \frac 1 {n^2}} \sum_{n=N}^{+\infty} \nr{U_n}_\SSS^2.
\end{aligned}
\end{equation}
This proves in particular that the series $\sum_{n \in \Nc} U_n$ converges in $\SSS$ and that there exists $C > 0$ independant of the sequence $(U_n)$ such that 
\[
\nr{\sum_{n \in \Nc} U_n}_\SSS^2 \leq C \sum_{n \in \Nc} \nr{U_n}_\SSS^2.
\]
With Proposition \ref{prop:intermediate-bases}, this proves that $\Th$ is well defined and bounded.

\stepp 
For $n \geq n_0$ we set
\[
\Pi_n = - \frac 1 {2i\pi} \int_{\Cc \big(n^2\nu^2,\frac {n\nu^2}2\big)} (\Tc-\z)\inv \, d\z \quad \in \Lc(\SSS).
\]
Then $\SSS_n = \ker(\Pi_n - \Id_\SSS)$, and $\SSS_j \subset \ker(\Pi_n)$ for all $j \in \Nc \setminus \set n$.
Now assume that the sequence $u = (u_k)_{k \in \N}$ is such that $\Th(u) = 0$. For $n \in \N$ we have $u_{2n+1} \Phi_{2n+1} + u_{2n+2} \Phi_{2n+2} = \Pi_n(\Th(u)) = 0$, so $u_{2n+1} = u_{2n+2} = 0$. Since this holds for all $n \geq n_0$ we have $\sum_{k=1}^{2n_0} u_k \Phi_k = 0$, and then $u_1 = \dots = u_{2n_0} = 0$. This proves that $\Th$ is injective.

\stepp By \eqref{eq:almost-orthogonal} there exists $N \geq n_0$ independant of $U$ such that 
\begin{equation} \label{eq:almost-orthogonal-2}
\abs{\nr{\sum_{n=N}^{N+p} U_n}_\SSS^2 - \sum_{n=N}^{N+p} \nr{U_n}_\SSS^2}
 \leq  \frac 12 \sum_{n=N}^{+\infty} \nr{U_n}_\SSS^2.
\end{equation}
Assume by contradiction that there exists a family $(U_n^p)_{p \in \N, n \in \Nc}$ such that $U_n^p \in \SSS_n$ for all $p \in \N$ and $n \in \Nc$, $\sum_{n \in \Nc} \nr{U_n^p}_\SSS^2 = 1$ for all $p \in \N$ but 
\[
\nr{\sum_{n \in \Nc} U_n^p }_\SSS \limt p {+\infty} 0.
\]
For $p \in \N$ we set 
\[
V_1^p = \sum_{n \in \Nc, n < N} U_n^p, \quad V_2^p = \sum_{n \geq N} U_n^p.
\]
After extracting a subsequence if necessary, we can assume that $V_1^p$ has a limit $V \in \mathsf{span}(\Phi_k)_{k \leq 2N}$ in $\SSS$. Since $V_1^p + V_2^p$ goes to 0, $V_2^p$ goes to $-V$, so $V$ also belongs to $\mathsf{span}(\Phi_k)_{k \geq 2N+1}$. By injectivity of $\Th$, this implies that $V = 0$. Then $U_n$ goes to 0 for all $n < N$, and hence
\[
\sum_{n = N}^{+\infty} \nr{U_n}_\SSS^2 \limt p {+\infty} 1.
\]
Since $V_2^p \to 0$, this gives a contradiction with \eqref{eq:almost-orthogonal-2}. Thus there exists $C > 0$ independant of the sequence $(U_n)$ such that 
\[
\nr{\sum_{n \in \Nc} U_n}_\SSS^2 \geq C\inv \sum_{n \in \Nc} \nr{U_n}_\SSS^2.
\]
With Proposition \ref{prop:intermediate-bases} we deduce that for $u \in \ell^2(\N^*)$ we have 
\[
\nr{\Th(u)}_\SSS^2 \gtrsim \nr{u}_{\ell^2(\N^*)}^2.
\]

\stepp To prove that $\Th$ is surjective we follow the proof of \cite[Theorem V.2.20]{kato}, except that the reference basis is not orthogonal. 
We denote by $\Th_0$ the map defined as $\Th$ with the family $(\Phi_k)_{k \in \N^*}$ replaced by $(\Phi_k^0)_{k \in \N^*}$ defined by \eqref{def:Phi-0} or \eqref{def:Phi-0-eq}. Since we already know that the family $(\Phi_k^0)_{k \in \N^*}$ is a Riesz basis, $\Th_0$ is boundedly invertible. Then $\Th \Th_0\inv - \Id_\SSS \in \Lc(\SSS)$ is the map 
\[
\Th \Th_0\inv - \Id_\SSS : \sum_{k=1}^\infty u_k \Phi_k^0 \mapsto \sum_{k=1}^\infty u_k (\Phi_k - \Phi_k^0).
\]
For $u = (u_k)_{k \in \N^*} \in \ell^2(\N^*)$ and $N \in \N$ we have 
\[
\nr{\sum_{k=N+1}^\infty u_k (\Phi_k - \Phi_k^0)}_\SSS^2 \leq \nr{u}_{\ell^2(\N^*)}^2 \sum_{k=N+1}^\infty \nr{\Phi_k - \Phi_k^0}_\SSS^2.
\]
By Proposition \ref{prop:DL}, \eqref{eq:lim-A2} and \eqref{eq:norm-etildez} we have 
\[
\nr{\Phi_k - \Phi_k^0}_\SSS^2 \lesssim \frac 1 {k^2},
\]
so $\Th \Th_0\inv-\Id$ is the limit in $\Lc(\SSS)$ of a family of finite rank operators, and hence it is compact. This implies that $\Th \Th_0\inv$ is a Fredholm operator. Since we already know that it is injective, it is surjective. Then $\Th$ is surjective and the proof is complete.
\end{proof}

\section{Resolvent for the Schr\"odinger operator on the wave guide} \label{sec-resolvent}

In this section we prove Theorem \ref{th-spectral-gap-Pc} for $a > 0$ and $b \in \R^*$. For this we deduce spectral properties of $\Pc = \Pc_{a,b}$ on $\HH$ from those of $\Tc = \Tc_{a,b}$ on $\SSS$. The intermediate result (Proposition \ref{prop:R} below) is valid for any $a,b \in \R$ with $(a,b) \neq (0,0)$ (in general we could proceed similarly, repeating the eigenvalues according to their geometric multiplicities, but the model case $a = b = 0$ is already clear).\\

We denote by $(\l_k)_{k \in \N^*}$ the sequence of (distinct) eigenvalues of $\Tc$. These eigenvalues have finite algebraic multiplicities and we know that for $k$ large enough the multiplicity of $\l_k$ is 1 if $a^2 \neq 4b^2$ and 2 if $a^2 = 4b^2$. In particular, if we denote by $m_k \in \N^*$ the multiplicity of the eigenvalue $\l_k$, we know that the sequence $(m_k)_{k \in \N}$ is bounded. We denote by $m$ its maximum.\\

For $u \in L^2(\R^{d-1})$ and $V \in \SSS$ we set $u\otimes V : (x,y) \in \O \mapsto u(x) V(y)$. Then, if we denote by $L$ the usual selfadjoint realization of the Laplacian on $\R^{d-1}$, we have $\Pc = L \otimes \Id_{L^2(\o)} + \Id_{L^2(\R^{d-1})} \otimes \Tc$.

Since the spectrum of $L$ is the half-line $[0,+\infty[$, it is known (see for instance Section XIII.9 in \cite{rs4}) that the spectrum of $\Pc$ is given by 
\[
\Sigma = \set{\l_k + r, k \in \N^*, r \in [0,+\infty[}.
\]

We can recover this fact directly in our context. Let $k \in \N^*$ and $r \geq 0$. We consider an eigenfunction $\Psi_k \in \SSS$ corresponding to the eigenvalue $\l_k$ of $\Tc$, and a sequence $(u_n)_{n \in \N}$ in $H^2(\R^{d-1})$ such that $\nr{u_n}_{L^2(\R^{d-1})} = 1$ for all $n \in \N$ and $(L - r) u_n \to 0$ in $L^2(\R^{d-1})$. Then for all $n \in \N$ the function $u_n \otimes \Psi_k$ belongs to $\Dom(\Pc)$ and in $\HH$ we have 
\[
\big( \Pc-(\l_k+r) \big) (u_n \otimes \Psi_k) = \big( (L - r) u_n \big) \otimes \Psi_k \limt {n}{+\infty} 0.
\]
This proves that $\l_k + r \in \s(\Pc)$, and hence $\Sigma \subset \s(\Pc)$. The converse inclusion will be a part of Proposition \ref{prop:R} below.\\

Let $k \in \N^*$. By Proposition \ref{prop:eq-z}, $\l_k$ is an eigenvalue of $\Tc$ of geometric multiplicity 1. There exist $\Psi_{k,1},\dots,\Psi_{k,m_k}$ such that $(\Tc - \l_k)\Psi_{k,1} = 0$ and $(\Tc-\l_k)\Psi_{k,j} = \Psi_{k,j-1}$ for $j \in \Ii 2 {m_k}$. By Proposition \ref{prop:Riesz-basis}, we can choose these vectors in such a way that $(\Psi_{k,j})_{k \in \N^*,1\leq j \leq m_k}$ is a Riesz basis of $\SSS$.\\

Let $F \in \HH \simeq L^2(\R^{d-1},\SSS)$. For almost all $x \in \R^{d-1}$ we have $F(x,\cdot) \in \SSS$, so there exist $f_{k,j}(x)$, $k \in \N^*$, $1 \leq j \leq m_k$, such that, in $\SSS$,
\begin{equation} \label{eq:F-fjk}
F(x,\cdot) = \sum_{k \in \N^*} \sum_{j=1}^{m_k} f_{k,j}(x) \Psi_{k,j}.
\end{equation}
Moreover, by the Riesz basis property, there exists $C \geq 1$ independant of $F$ such that, for almost all $x \in \R^{d-1}$,
\[
C\inv \sum_{k\in\N^*} \sum_{j=1}^{m_k} \abs{f_{k,j}(x)}^2 \leq \nr{F(x,\cdot)}_{\SSS}^2 \leq C \sum_{k\in\N^*} \sum_{j=1}^{m_k} \abs{f_{k,j}(x)}^2.
\]
After integration over $x \in \R^{d-1}$ we get 
\begin{equation} \label{eq:Riesz-basis-Omega}
C\inv \sum_{k\in\N^*} \sum_{j=1}^{m_k} \nr{f_{k,j}}_{L^2(\R^{d-1})}^2 \leq \nr{F}_{\HH}^2 \leq C \sum_{k\in\N^*} \sum_{j=1}^{m_k} \nr{f_{k,j}}_{L^2(\R^{d-1})}^2.
\end{equation}

With this notation we set, for $\z \in \C \setminus \Sigma$,
\begin{equation} \label{def:Rzeta}
\Rc(\z) F = \sum_{k = 1}^{+\infty} \sum_{j=1}^{m_k} \sum_{p = 0}^{m_k-j} (-1)^{p} \big( (L -(\z-\l_k) )^{-1-p} f_{k,j+p} \big) \otimes \Psi_{k,j}.
\end{equation}

\begin{proposition} \label{prop:R}
Let $\z \in \C \setminus \Sigma$. The series in the right-hand side of \eqref{def:Rzeta} converges in $\HH$ for all $F \in \HH$. This defines a bounded operator $\Rc(\z)$ on $\HH$ which is an inverse for $(\Pc-\z) : \Dom(\Pc) \to \HH$. Moreover, there exists $C > 0$ independant of $\z$ such that 
\begin{equation} \label{eq:norm-Rc-zeta}
\nr{\Rc(\z)}_{\Lc(\HH)} \leq \frac C {\mathsf{dist}(\z,\Sigma)^{[m]}}.
\end{equation}
\end{proposition}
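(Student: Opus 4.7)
The strategy is to exploit the tensor decomposition $\Pc = L \otimes \Id_\SSS + \Id_{L^2(\R^{d-1})} \otimes \Tc$ read in the Riesz basis $(\Psi_{k,j})$. In this basis $\Tc$ is block diagonal with a Jordan cell of size $m_k$ at $\l_k$ on the $k$-th block, so $\Pc - \z$ is, block by block, an upper triangular operator on $L^2(\R^{d-1})^{m_k}$ with diagonal $L - (\z - \l_k)$ and unit superdiagonal. Back-substitution on this triangular system, with the convention $\Psi_{k,0} = 0$, gives the linear relations $(L - (\z - \l_k))u_{k,j} + u_{k,j+1} = f_{k,j}$ (and $u_{k, m_k + 1} := 0$), whose unique solution is precisely the alternating sum in \eqref{def:Rzeta}. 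What remains is to check that this formal assembly converges in $\HH$, lands in $\Dom(\Pc)$, inverts $\Pc - \z$, and obeys \eqref{eq:norm-Rc-zeta}.

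The first step is a uniform bound on each block. Since $L$ is selfadjoint with spectrum $[0, +\infty[$, functional calculus gives
\[
\bigl\|(L - (\z - \l_k))^{-1-p}\bigr\|_{\Lc(L^2(\R^{d-1}))} = \mathsf{dist}(\z, \l_k + [0, +\infty[)^{-1-p}.
\]
Since $\l_k + [0, +\infty[ \subset \Sigma$, this is at most $\mathsf{dist}(\z, \Sigma)^{-(1+p)}$. A brief case analysis separating $\mathsf{dist}(\z, \Sigma) \geq 1$ and $\mathsf{dist}(\z, \Sigma) < 1$ then shows that for every $p \in \{0, \dots, m-1\}$ one has $\mathsf{dist}(\z, \Sigma)^{-(1+p)} \leq \mathsf{dist}(\z, \Sigma)^{-[m]}$, which is where the exponent $[m]$ enters \eqref{eq:norm-Rc-zeta}.

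Next I would establish convergence and the operator bound for $\Rc(\z)$. For $F \in \HH$ expanded as in \eqref{eq:F-fjk}, Cauchy--Schwarz over the at most $m_k \leq m$ terms defining $u_{k,j}$ yields, together with the previous step,
\[
\|u_{k,j}\|_{L^2(\R^{d-1})}^2 \leq m \, \mathsf{dist}(\z, \Sigma)^{-2[m]} \sum_{p=0}^{m_k - j} \|f_{k,j+p}\|_{L^2(\R^{d-1})}^2.
\]
Summing over $(k, j)$ and using the upper Riesz bound of \eqref{eq:Riesz-basis-Omega} applied to the candidate $U := \sum_{k,j} u_{k,j} \otimes \Psi_{k,j}$, together with the lower Riesz bound for $F$, gives simultaneously the convergence of the series \eqref{def:Rzeta} in $\HH$ and the estimate \eqref{eq:norm-Rc-zeta}.

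Finally, I would verify that $\Rc(\z) = (\Pc - \z)^{-1}$. On the dense subspace of those $F$ whose Riesz expansion is finite, the image $U = \Rc(\z) F$ is a finite combination of tensors in $\Dom(L) \otimes \Dom(\Tc)$, hence lies in $\Dom(\Pc)$; using $\Tc \Psi_{k,j} = \l_k \Psi_{k,j} + \Psi_{k,j-1}$, the triangular system telescopes to give $(\Pc - \z) U = F$. Closedness of $\Pc$ combined with continuity of $\Rc(\z)$ then extends $(\Pc - \z)\Rc(\z) = \Id_\HH$ to all of $\HH$, and the opposite identity $\Rc(\z)(\Pc - \z) U = U$ for $U \in \Dom(\Pc)$ follows from the same triangular algebra after expanding $U$ in the Riesz basis. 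The main technical obstacle is precisely this density-plus-closedness step: one must confirm that the partial sums of $\Rc(\z) F$ lie in $\Dom(\Pc)$ and that $(\Pc - \z)$ applied to them converges to $F$ in $\HH$, which is not automatic but does follow from the uniform bounds built in the previous two paragraphs.
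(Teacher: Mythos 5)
Your treatment of the norm bound, the convergence of the series, and the right-inverse identity $(\Pc-\z)\Rc(\z) = \Id_\HH$ matches the paper's proof step by step: bound the scalar resolvent powers by the spectral theorem, reduce $\mathsf{dist}(\z,\Sigma)^{-(1+p)}$ to $\mathsf{dist}(\z,\Sigma)^{-[m]}$ by separating the cases $\mathsf{dist}(\z,\Sigma)\gtrless 1$, sum using the Riesz-basis inequalities \eqref{eq:Riesz-basis-Omega}, then pass to the limit along the finite partial sums and invoke closedness of $\Pc$. All of this is correct.

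The gap is in your last sentence about the left-inverse identity $\Rc(\z)(\Pc-\z)U = U$ for $U \in \Dom(\Pc)$, which you dismiss with ``the same triangular algebra after expanding $U$ in the Riesz basis.'' This does not follow from what you have set up. Expanding $U(x,\cdot) = \sum_{k,j} u_{k,j}(x)\Psi_{k,j}$ and running the telescoping argument requires knowing that $(\Pc-\z)U$ has Riesz coefficients $(L+\l_k-\z)u_{k,j} + u_{k,j+1}$, which presupposes that each $u_{k,j}$ lies in $\Dom(L)$ and that the corresponding series converges to $(\Pc-\z)U$ in $\HH$ — neither is clear for a general element of $\Dom(\Pc)$, whose domain is not given as an algebraic tensor product. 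Equivalently, after establishing the right-inverse identity the remaining content is precisely the injectivity of $\Pc-\z$, and a bounded right inverse alone does not give injectivity. The paper sidesteps this entirely with a short abstract argument: since $\Pc$ is maximal sectorial one can fix $\z_0 \in \rho(\Pc)$, note that the already-proved right-inverse identity forces $\Rc(\z_0) = (\Pc-\z_0)\inv$, check directly from formula \eqref{def:Rzeta} that $\Rc(\z)$ and $\Rc(\z_0)$ commute, derive the resolvent identity
\[
\Rc(\z)F - \Rc(\z_0)F = (\z-\z_0)\Rc(\z)\Rc(\z_0)F,
\]
and then apply it with $F = (\Pc-\z_0)U$ to get $\Rc(\z)(\Pc-\z)U = U$. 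You would need to supply an argument of this kind (or prove injectivity of $\Pc-\z$ directly, e.g.\ by a Fourier-transform-in-$x$ argument) to close the proof.
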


\begin{proof}
Let $F \in \HH$. We use notation \eqref{eq:F-fjk}.

\stepp For $k \in \N^*$, $j \in \Ii 1 {m_k}$ and $p \in \Ii 0 {m_k-j}$ we have by the spectral theorem 
\begin{align*}
\nr{ (L -(\z-\l) )^{-1-p}}_{\Lc(\R^{d-1})} 
 \leq \frac 1 {\mathsf{dist}(\z-\l ,\R_+)^{1+p}} \leq \frac 1 {\mathsf{dist}(\z,\Sigma)^{1+p}}
 \leq \frac 1 {\mathsf{dist}(\z ,\Sigma)^{[m_k]}}.
\end{align*}
With \eqref{eq:Riesz-basis-Omega} we deduce that for $N,N_1 \in \N^*$ we have
\begin{multline*}
\nr{\sum_{k = N}^{N+N_1} \sum_{j=1}^{m_k} \sum_{p = 0}^{m_k-j} (-1)^{p} \big( (L -(\z-\l_k) )^{-1-p} f_{k,j+p} \big) \otimes \Psi_{k,j}}_\HH^2\\
\leq \frac {2mC}{(\mathsf{dist}(\z,\Sigma)^{[m]})^2} \sum_{k=N}^{N+N_1}  \sum_{j=1}^{m_k}  \abs{f_{k,j}}^2  \limt {N}{+\infty} 0,
\end{multline*}
so $\Rc(\z) F$ converges in $\HH$ and, by \eqref{eq:Riesz-basis-Omega} again,
\[
\nr{\Rc(\z) F}_{\HH} \lesssim \frac {\nr{F}_\HH} {\mathsf{dist}(\z ,\Sigma)^{[m]}}.
\]
This proves that $\Rc(\z)$ defines a bounded operator on $\HH$ and \eqref{eq:norm-Rc-zeta} is satisfied.

\stepp For $N \in\N^*$ we set 
\[
U_N = \sum_{k = 1}^{N} \sum_{j=1}^{m_k} \sum_{p = 0}^{m_k-j} (-1)^{p} \big( (L -(\z-\l_k) )^{-1-p} f_{k,j+p} \big) \otimes \Psi_{k,j}.
\]
Then $U_N \in \Dom(\Pc)$, it goes to $\Rc(\z)F$ in $\HH$ and 
\[
(\Pc-\z) U_N = \sum_{k=1}^N \sum_{j=1}^{m_k} f_{k,j} \otimes \Psi_{k,j} \limt N {+\infty} F.
\]
Since $\Pc$ is closed, this proves that $\Rc(\z) F \in \Dom(\Pc)$ and $(\Pc-\z) \Rc(\z) F = F$. In particular, for $\z_0 \in \rho(\Pc)$ we have
\begin{equation} \label{eq:Rc-Pcinv}
(\Pc-\z_0)\inv = \Rc(\z_0).
\end{equation}

\stepp It remains to prove that for $\z \in \C \setminus \Sigma$ and $U \in \Dom(\Pc)$ we have
\begin{equation} \label{eq:Rc-Pc}
{\Rc(\z) (\Pc-\z)U} = U.
\end{equation}
This will prove that $\z \in \rho(\Pc)$. Let $\z_0 \in \rho(\Pc)$. We can check that for $F \in \HH$ we have $\Rc(\z) \Rc(\z_0) F = \Rc(\z_0) \Rc(\z) F$. On the other hand we have 
\[
(\Pc-\z_0) \big( \Rc(\z) F - \Rc(\z_0) F \big) = (\z-\z_0) \Rc(\z) F
\]
so, by \eqref{eq:Rc-Pcinv}, $\Rc$ satisfies the resolvent identity
\[
\Rc(\z) F - \Rc(\z_0) F = (\z-\z_0) \Rc(\z_0) \Rc(\z) F =  (\z-\z_0) \Rc(\z) \Rc(\z_0) F.
\]
Applied with $F = (\Pc-\z_0)\inv U$ this gives
\[
\Rc(\z) (\Pc-\z) U = \Rc(\z) F  - (\z-\z_0) \Rc(\z) U = U.
\]
This proves \eqref{eq:Rc-Pc} and completes the proof.
\end{proof}

\section{More about the low frequency transverse eigenvalues} \label{sec-eigenvalues-2}

In Section \ref{sec-eigenvalues} we did not say much about the low frequency eigenvalues of the transverse operator $\Tc = \Tc_{a,b}$. We were quite accurate with the large eigenvalues, and then we said that the part of the spectrum in $D(0,R^2)$ (see Proposition \ref{prop:multiplicities-high-freq}) consists of a finite number of eigenvalues with finite multiplicities, and negative imaginary parts if $a > 0$ and $b \neq 0$. That was enough to prove that there is a spectral gap for $\Pc= \Pc_{a,b}$ and hence the local energy decay for \eqref{system}. In this final section we provide more information about these low frequency eigenvalues.\\

We consider $a \geq 0$ and $b \in \R$ such that $(a,b) \neq (0,0)$. Taking the adjoint, we get similar results for the case $a < 0$ (for the proofs the roles of $\phi_-$ and $\phi_+$ are reversed).\\

We recall from Proposition \ref{prop:eq-z} that the eigenvalues of $\Tc$ are geometrically simple. In the following proposition we discuss their algebraic simplicity.

\begin{proposition} \label{prop:multiplicities-z-z2}
Let $a \geq 0$ and $b \in \R$ such that $(a,b) \neq (0,0)$.
\begin{enumerate}[\rm (i)]
\item If $a^2 = 4 b^2$ then the eigenvalues of $\Tc$ are not simple.
\item Assume that $a^2 \neq 4b^4$. Let $\s \in \{\pm\}$ and let $z \in \C^*$ be a zero of $\phi_\s$. Then $z^2$ is an algebraically simple eigenvalue of $\Tc$ if and only if $z$ is a simple zero of $\phi_\s$.
\item All the zeros of $\phi_+$ are simple. If $a^2 > 4b^2$ then all the zeros of $\phi_-$ are simple. There exists a countably infinite subset $\Theta_+$ of $\set{(a,b) \in \R_+^* \times \R | a^2 < 4b^2}$ such that $\phi_-$ has a multiple zero if and only if $(a,b) \in \Theta_+$.
\end{enumerate}
\end{proposition}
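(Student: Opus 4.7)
The plan is to reduce everything to the differential identity
\[
\phi_\sigma'(z) = \frac{2(\mu_\sigma + i\ell(z^2 - \mu_\sigma^2))}{z - \mu_\sigma} = \frac{4 z^2 \eta_\sigma(z)}{z - \mu_\sigma},
\]
which I would obtain by differentiating $\phi_\sigma$ and substituting the relation $e^{2iz\ell} = (z + \mu_\sigma)/(z - \mu_\sigma)$ valid at a zero of $\phi_\sigma$. Its consequence is that, for $z \in \C^*$ with $\phi_\sigma(z) = 0$, one has $\phi_\sigma'(z) = 0$ if and only if $\eta_\sigma(z) = 0$. The rest of the argument pairs this identity with the description of $\ker((\Tc - z^2)^2)$ in Proposition \ref{prop:eq-z}(ii), which hinges on whether $A_1 \in \ker(M - \mu_\sigma)$ lies in $\Ran(M - \mu_\sigma)$.

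For (i), when $a^2 = 4b^2$ and $(a,b) \neq (0,0)$, I would show by direct $2\times 2$ computation that $M - (a/2) I$ has rank $1$ with $\ker(M - \mu_\pm) = \Ran(M - \mu_\pm)$. Hence for any nonzero eigenvalue $z^2$ and any $A_1 \in \ker(M - \mu_\pm) \setminus \{0\}$, the equation $(M - \mu_\pm) A_2 = \eta_\pm(z) A_1$ is solvable, producing a generalized eigenvector $A_2 e_z + A_1 \tilde e_z$ independent from $A_1 e_z$. For (ii), when $a^2 \neq 4b^2$, the eigenvectors $A_\pm$ of $M$ span $\C^2$ and $\Ran(M - \mu_\sigma) = \C A_{-\sigma}$ meets $\ker(M - \mu_\sigma) = \C A_\sigma$ only at $0$; so the same equation admits a solution with $A_1 \neq 0$ only if $\eta_\sigma(z) = 0$, which by the key identity is equivalent to $z$ being a multiple zero of $\phi_\sigma$. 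Algebraic multiplicity strictly greater than $2$ is then ruled out by the standard implication $\ker((\Tc - z^2)^2) = \ker(\Tc - z^2) \Rightarrow \ker((\Tc - z^2)^k) = \ker(\Tc - z^2)$ for every $k \geq 1$.

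For (iii), a multiple zero of $\phi_\sigma$ is any nonzero solution of the system $\phi_\sigma(z) = 0$ and $z^2 = \mu_\sigma^2 + i\mu_\sigma/\ell$. For $\phi_+$ with $a > 0$, I would compute $\Im(z^2)$ directly from the second equation: it equals $\mu_+/\ell > 0$ when $a^2 \geq 4b^2$, and $a\sqrt{4b^2-a^2}/2 + a/(2\ell) > 0$ when $a^2 < 4b^2$, both contradicting the dissipativity of $\Tc$ established in Proposition \ref{prop:Tc}. For $a = 0$, $b \neq 0$, $\Tc$ is selfadjoint, the second equation gives $z^2 = -b^2 - |b|/\ell < 0$, and substituting $z = \pm i \sqrt{b^2 + |b|/\ell}$ into $\phi_+$ produces an equation of the form $(\alpha \mp |b|)\, e^{\mp 2\alpha\ell} = \alpha \pm |b|$ with $\alpha = \sqrt{b^2 + |b|/\ell} > |b| > 0$, excluded by comparing moduli. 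The $\phi_-$ part of (iii) in the regime $a^2 > 4b^2$ is identical (with $\mu_-$ real non-negative; the boundary case $b = 0$ is trivial since then $\phi_-(z) = z(e^{2iz\ell}-1)$).

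The main obstacle will be the case $a > 0$, $a^2 < 4b^2$. The plan here is to normalize via $u = z/\mu_-$ and $\theta = \frac 12 \log\big((1+u)/(1-u)\big)$; from the second equation one obtains $\mu_- \ell = -i/(1-u^2)$, and using the identities $2u/(1-u^2) = \sinh(2\theta)$ and $(u+1)/(u-1) = -e^{2\theta}$, the system reduces to
\[
\sinh \alpha - \alpha = i\pi(2k+1), \qquad k \in \Z, \quad \alpha = 2\theta.
\]
For each fixed $k$, the zero set of the non-constant entire function $\alpha \mapsto \sinh \alpha - \alpha - i\pi(2k+1)$ is discrete, so the union over $k$ is countable; pulling back through the two-to-one analytic map $(a,b) \mapsto \mu_- = (a - i\sqrt{4b^2 - a^2})/2$ then yields a countable $\Theta_+$. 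The hard part will be ensuring $\Theta_+$ is infinite: for $|k|$ large, the asymptotic $\sinh \alpha \approx i\pi(2k+1)$ suggests the approximate root $\alpha \approx \log(2\pi(2k+1)) + i\pi/2$, and a Rouch\'e argument on small disks around these approximate roots should produce a genuine solution for each large $k$, yielding distinct pairs $(a,b)$ in the prescribed region.
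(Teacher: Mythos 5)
Your parts (i) and (ii), and the simplicity claims for $\phi_+$ and for $\phi_-$ when $a^2>4b^2$, are correct and essentially the paper's argument: your identity $\phi_\sigma'(z)=4z^2\eta_\sigma(z)/(z-\mu_\sigma)$ at a zero is exactly the equivalence $\eta_\sigma(z)=0\Leftrightarrow\phi_\sigma'(z)=0$ that the paper checks, combined with the description \eqref{expr-ker-T-2} of $\ker((\Tc-z^2)^2)$ and the rank-one structure of $M-\mu_\sigma$; your treatment of $a=0$ for $\phi_+$ (testing the two purely imaginary candidates directly) is in fact more careful than the paper's, which simply asserts $z\in\R$ there. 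Your reduction of the double-zero system, for $a>0$ and $a^2<4b^2$, to $\sinh\alpha-\alpha=i\pi(2k+1)$ is also correct and is equivalent to the paper's equation $\sin(2z\ell)+2z\ell=0$; the paper then works instead with the real system \eqref{eq:system-ch-sin} and an intermediate value argument, while you propose a complex-analytic route.

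Two genuine gaps remain in the $\Theta_+$ part. First, you never rule out multiple zeros of $\phi_-$ when $a=0$, $b\neq0$, which does lie in the regime $a^2<4b^2$; since the statement requires $\Theta_+\subset\R_+^*\times\R$, this case must be excluded. Here $\mu_-=-i\abs b$ gives the candidate $z^2=\abs b/\ell-b^2$, which is not covered by your $\phi_+$ computation and needs its own argument (the paper gets it for free from its proof that any solution of the double-zero system has $\Re(\mu)>0$, using that such a $z$ satisfies $\Im(z^2)<0$ while eigenvalues for nonpositive damping have nonnegative imaginary parts). Second, the infinitude of $\Theta_+$ is only sketched: besides carrying out the Rouch\'e step, you must verify that the roots $\alpha_k$ you produce yield $\mu_-=-i/(\ell(1-u^2))$ with $\Re(\mu_-)>0$ and $\Im(\mu_-)<0$, i.e.\ that the recovered pair $(a,b)$ really lies in $\set{a>0,\ a^2<4b^2}$ and that $z$ is a double zero of $\phi_-$ rather than of $\phi_+$, and you must show that infinitely many of the resulting pairs are distinct (for instance because $\abs{\mu_-}\to\infty$ along your approximate roots, so a fixed $(a,b)$, whose zeros lie in a bounded horizontal strip, can occur only finitely often). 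The paper settles both points at once by constructing, for each $k$, a real solution $\xi_k$ of \eqref{eq:sin-xi} by monotonicity, and proving $\Re(\mu_k)>0$ by the spectral contradiction just mentioned; your route can certainly be completed, but as written these verifications are missing.
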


\begin{proof}
\stepp If $a^2 = 4b^2$ then $(M-\mu)^2 = 0$ so, by \eqref{expr-ker-T-2}, $\ker((\Tc-z^2)^2)$ is at least of dimension 2 for any $z \in \Zc$.

\stepp Now we assume that $a^2 \neq 4b^2$. We consider $z \in \Zc$ and $\s \in \{\pm\}$ such that $\phi_\s(z)=0$. We have $\ker\big( (M-\mu_\s)^2 \big) = \ker(M-\mu_\s)$ so from \eqref{expr-ker-T-2} we see that $\ker\big((\Tc-z^2)^2\big) = \ker(\Tc-z^2)$ if and only if $\eta_\s(z) \neq 0$. On the other hand we have 
\[
\phi_\pm'(z) = e^{2iz\ell}-1 +2i\ell (z-\mu_\s) e^{2iz\ell}.
\]
Then we can check that $\eta_\s(z) = 0$ if and only if $\phi_\s' (z) = 0$. This gives the second statement.

\stepp If $\phi_+(z) = 0$ and $a > 0$ we have 
\begin{equation} \label{eq:mu-plus}
\Re \big(\mu_+ + i\ell(z^2-\mu_+^2) \big) = \Re(\mu_+) -\ell \Im(z^2) + \ell \Im(\mu_+^2) > 0,
\end{equation}
%(the first two terms are positive, the third is non-negative), 
so $\eta_+(z) \neq 0$ and the zeros of $\phi_+$ are simple. If $a^2 > 4b^2$ then $\mu_-$ is real positive and we similarly see that the zeros of $\phi_-$ are simple. If $a = 0$ we have $z \in \R$ so
\[
\mu_+ + i\ell (z^2 - \mu_+^2) = i\abs b + i\ell z^2 + i\ell b^2 \neq 0
\]
and, again, the zeros of $\phi_+$ are simple.

\stepp Assume that there exists $z \in \Zc$ such that $\phi_-(z) = \phi_-'(z) = 0$. We assume for instance that $\Re(z) \geq 0$ and $\Im(z) \leq 0$. We have $e^{2iz\ell} + 1 \neq 0$,
\begin{equation} \label{expr:mu}
\mu_- = \frac {e^{2iz\ell} - 1}{e^{2iz\ell}  +1} z
\end{equation}
and
\begin{equation} \label{eq:sin-z}
\sin(2z\ell) + 2z\ell = 0.
\end{equation}
This last equality implies in particular that $z$ is not real or purely imaginary. We write $2z\ell = \xi + i \k$ with $\xi > 0$ and $\k < 0$. Then \eqref{eq:sin-z} gives
\begin{equation} \label{eq:system-ch-sin}
\begin{cases}
\cosh(\kappa) \sin(\xi) + \xi = 0,\\
\sinh(\kappa) \cos(\xi) + \kappa = 0.
\end{cases}
\end{equation}
In particular $\cos(\xi) < 0$ and $\sin(\xi) < 0$. Then
\begin{equation} \label{eq:argch-kappa}
\k =  -  \mathrm{argch}\left( -\frac {\xi} {\sin(\xi)} \right)
\end{equation}
and 
\begin{equation} \label{eq:sin-xi}
\cos(\xi) \sqrt {\frac {\xi^2}{\sin(\xi)^2}-1} +   \mathrm{argch}\left( -\frac {\xi} {\sin(\xi)} \right) = 0.
\end{equation}

\stepp Let $k \in \N$. The left-hand side of \eqref{eq:sin-xi} has a positive derivative in $I_k = \big](2k+1)\pi, (2k+\frac 32) \pi\big]$, it goes to $-\infty$ at $(2k+1)\pi$ and is positive at $(2k + \frac 32) \pi$.
We denote by $\xi_k$ the unique solution of \eqref{eq:sin-xi} in $I_k$. Then we define $\kappa_k$ by \eqref{eq:argch-kappa} and we set 
\begin{equation} \label{eq:z-xi-kappa}
z_k = \frac {\xi_k + i\kappa_k}{2\ell}.
\end{equation}
We finally define $\mu_k$ by \eqref{expr:mu}. Notice that if $\Re(\mu_k) \leq 0$ then there exist $\a \leq 0$ and $\b\in \R$ such that $z_k \in \Zc_{\a,\b}$ and $\mu_k = \mu_{+,\a,\b}$ or $\mu_k = \mu_{-,\a,\b}$. Since $\Im(z_k^2) < 0$ this gives a contradiction and proves that $\Re(\mu_k) > 0$. Then we set 
\[
a_k = 2 \Re(\mu_k) >0 \quad \text{and} \quad b_k = \frac 12  \sqrt{a_k^2 + \Im(\mu_k)^2} > 0.
\]

\stepp If $a > 0$ and $b \in \R^*$ are such that $a^2 < 4b^2$ and $\Tc_{a,b}$ has an eigenvalue with algebraic multiplicity greater than 1, then this eigenvalue is necessarily $z_k^2$ for some $k \in \N$, we have $\mu_{-,a,b} = \mu_k$ and, finally, $a = a_k$ and $\abs b = b_k$. Conversely, $\Tc_{a_k,b_k}$ and $\Tc_{a_k,-b_k}$ have an eigenvalue $z_k^2$ of algebraic multiplicity greater than 1 for all $k \in \N$. This proves that $\Tc_{a,b}$ has a non-simple eigenvalue if and only if $(a,b)$ belongs to $\bigcup_{k \in \N} \{(a_k,b_k),(a_k,-b_k)\}$ and concludes the proof.
\end{proof}

For a single equation with damping at the boundary, is is proved in \cite{royer-diss-schrodinger-guide} that for each $n \in \N$ there is exaclty one square root of an eigenvalue with real part in $]n\nu,(n+1)\nu[$. This came from the continuity of the spectrum with respect to the absorption index $a$ and the fact that the square roots of the eigenvalues do not have their real parts in $\nu \N$. This ensured in particular that each eigenvalue of the transverse operator is simple. Here we prove an analogous result, but the situation is not as simple as it was for a single equation.

\begin{proposition} \label{prop:zero-Nnu}
Let $a \geq 0$ and $b \in \R$ such that $(a,b) \neq (0,0)$. Let $n \in \N^*$. 
\begin{enumerate}[\rm (i)]
\item If $\vf_+(z) = 0$ then $\Re(z) \neq n\nu$.

\item If $b = 0$ (and $a > 0$) we have $\vf_-(n\nu) = 0$ (this is a simple zero) and $\vf_-$ has no other zero of real part equal to $n \nu$.

\item If $b \neq 0$ then $\vf_-$ has a zero of real part $n\nu$ if and only if $0<a^2 < 4b^2$, $a < 2n\nu$ and 
\begin{equation} \label{hyp-vp-nnu}
e^{2n\pi \sqrt{\frac {4b^2}{a^2} -1 }} = \frac {2n\nu + a}{2n \nu - a}.
\end{equation}
In this case $z$ is unique and is given by 
\begin{equation} \label{eq:z-nnu}
z = n\nu - i n \nu \sqrt{\frac {4b^2}{a^2} - 1}.
\end{equation}
This is a simple zero of $\phi_-$.
\end{enumerate}
\end{proposition}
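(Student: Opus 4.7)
The unifying idea is to parametrize a putative zero on the line $\Re(z)=n\nu$ by writing $z = n\nu + iy$ with $y\in\R$. Then $e^{2iz\ell} = e^{2in\pi}e^{-2y\ell} = e^{-2y\ell}$ is real and positive, so the equation $\phi_\pm(z)=0$ reduces to
\[
\mu_\pm \;=\; (n\nu+iy)\,\frac{e^{-2y\ell}-1}{e^{-2y\ell}+1} \;=\; -(n\nu+iy)\tanh(y\ell).
\]
Separating real and imaginary parts yields the system
\[
\Re(\mu_\pm)=-n\nu\tanh(y\ell),\qquad \Im(\mu_\pm)=-y\tanh(y\ell).
\]
Since $y\tanh(y\ell)\ge0$ for all real $y$, the right-hand side of the imaginary equation is always $\le0$; this is the key constraint that forces almost all configurations to fail.

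For (i), I would run a short case analysis on $\phi_+$. In every regime $\Re(\mu_+)>0$ (it equals $a/2 + \sqrt{a^2-4b^2}/2$ if $a^2\ge4b^2$, and $a/2$ otherwise), while $\Im(\mu_+)\ge 0$ (with $\Im(\mu_+)>0$ exactly when $a^2<4b^2$, or $a=0$). The imaginary equation rules out the case $\Im(\mu_+)>0$ since it would require $y\tanh(y\ell)<0$; in the remaining cases $\Im(\mu_+)=0$ forces $y=0$, and then the real equation gives $\Re(\mu_+)=0$, contradicting $\Re(\mu_+)>0$.

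For (ii), when $b=0$ we have $\mu_-=0$, so $\phi_-(z)=z(e^{2iz\ell}-1)$ factors completely; its zero set is $\nu\Z$, each zero is simple (as a direct derivative computation shows), and the only zero with $\Re=n\nu$ is $z=n\nu$ itself. For (iii), $b\neq 0$: if $a^2\ge 4b^2$ then $\mu_-$ is a positive real number, so $\Im(\mu_-)=0$ forces $y=0$ and then $\Re(\mu_-)=0$, contradiction. Hence one must have $a^2<4b^2$; writing $s=\sqrt{4b^2-a^2}>0$ so that $\mu_-=(a-is)/2$, the real equation reads $\tanh(y\ell)=-a/(2n\nu)$, which is solvable in $y$ iff $a>0$ and $a<2n\nu$; it uniquely determines $y\ell=-\tfrac12\log\tfrac{2n\nu+a}{2n\nu-a}$. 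Feeding this back into the imaginary equation gives $y=-n\nu\,s/a$, i.e.\ \eqref{eq:z-nnu}, and combining with the expression for $e^{-2y\ell}$ produces the compatibility \eqref{hyp-vp-nnu}.

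The last step is simplicity of this zero $z$. Using $(z-\mu_-)e^{2iz\ell}=z+\mu_-$, we rewrite
\[
\phi_-'(z)=e^{2iz\ell}-1+2i\ell(z+\mu_-).
\]
A short computation with $z=n\nu+iy$, $y=-n\nu s/a$ and $\mu_-=(a-is)/2$ gives $z+\mu_-=\frac{2n\nu+a}{2a}(a-is)=\frac{2n\nu+a}{a}\mu_-$, so that
\[
\phi_-'(z)=\frac{2a}{2n\nu-a}+\frac{2i\ell(2n\nu+a)}{a}\,\mu_-,
\]
whose real part is $\frac{2a}{2n\nu-a}+\frac{\ell(2n\nu+a)s}{a}>0$. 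Hence $\phi_-'(z)\neq 0$, proving simplicity. The main nuisance is the careful case-splitting in (i) and the last algebraic verification; everything else follows cleanly from the parametrization on the first line.
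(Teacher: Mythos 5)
Your proof is correct and follows essentially the paper's route: writing $z=n\nu+iy$ so that $e^{2iz\ell}=e^{-2y\ell}$ is real, your relation $\mu_\pm=-(n\nu+iy)\tanh(y\ell)$ is exactly the paper's identity \eqref{expr:mu} (equivalently \eqref{eq:exp-2izl-bis}) restricted to the line $\Re(z)=n\nu$, and both arguments conclude by a sign analysis of real and imaginary parts; for simplicity you take the real part of $\phi_-'(z)$ where the paper takes the imaginary part, which is immaterial. A small advantage of your version is that it is self-contained and treats real and non-real $z$ uniformly, whereas the paper first invokes dissipativity/selfadjointness of $\Tc$ to get $a>0$ and $\Im(z)<0$ and handles $z=n\nu$ separately. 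Two harmless slips to patch: when $a=0$ (and $b\neq0$) the real equation $\tanh(y\ell)=-a/(2n\nu)$ \emph{is} solvable (by $y=0$); it is the imaginary equation $-y\tanh(y\ell)=-s/2<0$ that rules out $a=0$, and likewise the parenthetical claim that $\Re(\mu_+)>0$ in every regime fails at $a=0$, though your case split already disposes of that case via $\Im(\mu_+)>0$. Finally, in (ii) the zero of $\phi_-=z(e^{2iz\ell}-1)$ at $z=0$ is double rather than simple, but this is irrelevant since $n\geq 1$.
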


\begin{proof}
\stepp We easily see that $n\nu$ belongs to $\Zc$ if and only if $b = 0$, and that in this case it is a simple zero of $\phi_-$ and not a zero of $\phi_+$.

\stepp Assume that $z \in \Zc$ satisfies $\Re(z) = n\nu > 0$ and $\Im(z) \neq 0$. We necessarily have $a > 0$ and $\Im(z) < 0$ so
\[
e^{2iz\ell} = e^{-2\ell \Im(z)}  \in ]1,+\infty[.
\]
We set 
\[
\kappa =  \frac {e^{2iz\ell}+1}{e^{2iz\ell}-1} \quad \in ]1,+\infty[.
\]
By \eqref{eq:exp-2izl-bis} we have $z = \k \mu_+$ or $z = \k \mu_-$. 
If $a^2 \geq 4 b^2$ then $\mu_+$ and $\mu_-$ are real, which gives a contradiction. Thus we have $a^2 < 4b^2$.
Since $\Im(\mu_+) > 0$ we cannot have $z = \k \mu_+$. Therefore $z = \k \mu_-$. Since $\Re(\mu_-) = \frac a 2$ we necessarily have 
\begin{equation} \label{eq:kappa}
\k = \frac {2n\nu} a,
\end{equation}
which gives \eqref{eq:z-nnu} and implies $2n\nu > a$. Then we can write 
\begin{equation} \label{eq:f-z}
e^{2n\pi \sqrt{\frac {4b^2}{a^2} -1 }} - \frac {2n\nu+a}{2n\nu-a} = e^{2iz\ell} -  \frac {\k  +1}{\k -1} = 0.
\end{equation}

Conversely, if \eqref{hyp-vp-nnu} holds then with $z$ defined by \eqref{eq:z-nnu} we have $z = \k \mu_-$ with $\kappa$ given by \eqref{eq:kappa}, so the equality \eqref{eq:f-z} now gives $\phi_-(z) = 0$, and hence $z \in \Zc$.

Finally, for $z$ given by \eqref{eq:z-nnu} we have
\[
\Im\big(\phi_-'(z)\big) = \Im \big(  e^{2iz\ell} - 1  +2i\ell\mu_-(\k-1) e^{2iz\ell} \big) = 2\ell (\k-1) e^{2iz\ell} \Re(\mu_-) \neq 0, 
\]
so $\phi_-'(z)\neq 0$ and $z$ is a simple zero of $\phi_-$
\end{proof}

For $n \in \N$ we set $\C_n = \set{z \in \C \, : \, n\nu<\Re(z)<(n+1)\nu}$. We recall that if $a^2 \neq 4b^2$ then the functions $\phi_-$ and $\phi_+$ have no common zeros.

\begin{proposition} \label{prop:number-zeros-Cn}
Let $a \geq 0$ and $b \in \R$. Let $n \in \N$.
\begin{enumerate}[\rm(i)]
\item If $a > 0$ and $b = 0$ then $\phi_-(n\nu) = 0$ (this is a simple zero), $\phi_-$ has no zero in $\C_n$ and $\phi_+$ has a unique zero in $\C_n$ (which is also simple).
\item If $a^2 > 4b^2 > 0$ then $\phi_\pm$ has a unique zero $\z_{n,\pm}$ in $\C_n$ (and this zero is simple).
\item If $a^2 = 4b^2 > 0$ then $\phi_+ = \phi_-$ has a unique zero $\z_{n}$ in $\C_n$, and this zero is simple (therefore it is a zero of order 2 for the product $\phi_- \phi_+$).
\item If $a^2 < 4b^2$ then the function $\phi_+$ has a unique zero $\z_{n,+}$ in $\C_n$ (and this zero is simple). There exists a unique $\th \in \big] \frac a{2\nu} , +\infty \big]$ such that
\begin{equation} \label{def:theta}
4 b^2 = a^2  + \frac {a^2} {4\th^2\pi^2} \ln\left( \frac {2\th \nu + a}{2\th\nu-a} \right)^2,
\end{equation}
and then the number of zeros of $\phi_-$ in $\C_n$ (counted with multiplicities) is 
\[
\begin{cases}
2 & \text{if } n < \th < n+1,\\
1 & \text{otherwise}.
\end{cases}
\]
\end{enumerate}
\end{proposition}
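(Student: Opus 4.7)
The plan is to combine three tools already established in the excerpt: the asymptotic localization from Proposition \ref{prop:Sigma-disks}, the characterization from Proposition \ref{prop:zero-Nnu} of when $\phi_\pm$ can have a zero with real part exactly $n\nu$, and a continuity-in-parameters argument. The guiding principle is that under a continuous deformation of $(a,b)$, a zero of $\phi_\pm$ can enter or leave the open strip $\C_n$ only by crossing one of the vertical lines $\Re(z)=n\nu$ or $\Re(z)=(n+1)\nu$, and Proposition \ref{prop:zero-Nnu} describes exactly when such a crossing is possible; Proposition \ref{prop:Sigma-disks}, applied uniformly in the deformation parameter, prevents zeros from materialising at or escaping to infinity.

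For (i), I would directly factor $\phi_-(z) = z(e^{2iz\ell}-1)$ and observe that its zeros $\nu\Z$ all lie on strip boundaries, so none lies in any $\C_n$. For $\phi_+$ I would deform $a$ continuously from $0$ to its target value while keeping $b=0$: at $a=0$ the zeros are again at $\nu\Z$, and a second-order expansion of $\phi_+$ near $n\nu$ shows that the zero near $n\nu$ leaves the boundary into the interior of $\C_n$ (not of $\C_{n-1}$); Proposition \ref{prop:zero-Nnu}(i) then prevents any subsequent crossing, so exactly one zero stays in each $\C_n$. Cases (ii) and (iii) follow from the analogous deformation $b \mapsto sb$, $s \in [0,1]$, with $a$ fixed: the path stays in the regime $a^2 \geq 4s^2b^2$, so by Proposition \ref{prop:zero-Nnu} neither $\phi_+$ nor $\phi_-$ has a zero touching any line $\Re(z)=n\nu$. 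At $a^2 = 4b^2$ the two families coalesce, giving the single simple zero of $\phi_-=\phi_+$ per strip claimed in (iii).

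For (iv), I would first establish existence and uniqueness of $\theta$ by rewriting \eqref{def:theta} as $4b^2 = a^2 + (a/\ell)^2 h(2\theta\nu)^2$ with $h(u) = u^{-1}\ln\!\big((u+a)/(u-a)\big)$ and verifying by direct differentiation that $h$ is strictly positive and strictly decreasing on $(a,+\infty)$, with $h(u) \to +\infty$ as $u \to a^+$ and $h(u) \to 0$ as $u \to +\infty$. The count for $\phi_+$ transfers from (ii) unchanged. For $\phi_-$, I would start from the endpoint $b=a/2$ of case (iii) (one simple zero per strip, sitting to the right of $n\nu$) and deform $b$ upward. By Proposition \ref{prop:DL}, once $\mu_-$ acquires a negative imaginary part one has $\Re(z_{n,-}) < n\nu$ for $n$ large, so each $z_{n,-}$ eventually migrates to $\C_{n-1}$. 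Proposition \ref{prop:zero-Nnu}(iii) pinpoints the exact $b$-value of each such crossing, and solving \eqref{hyp-vp-nnu} for the crossing index as a continuous function of $b$ produces the separator $\theta = \theta(a,b)$. The bookkeeping then yields $z_{n,-} \in \C_n$ iff $n < \theta$, and $z_{n+1,-} \in \C_n$ iff $n+1 > \theta$, so $\C_n$ contains two zeros of $\phi_-$ precisely when $\theta \in (n,n+1)$ and one otherwise.

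The main obstacle will be case (iv). Beyond the monotonicity step used to define $\theta$, I need to check that each crossing of $\Re(z)=n\nu$ by $z_{n,-}$ is transverse (the zero passes through rather than reflecting back), which reduces to the non-vanishing of $\phi_-'$ at the crossing point, a fact already implicit in the simplicity argument at the end of Proposition \ref{prop:zero-Nnu}(iii). The remaining work is careful tracking of zeros under continuation, for which the uniform-in-parameter localisation statement of Proposition \ref{prop:Sigma-disks}(i) is the crucial ingredient.
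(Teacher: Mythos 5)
Your plan for (i)--(iii) is essentially the paper's own argument: deform the parameters, use Proposition \ref{prop:Sigma-disks} (uniformly in the deformation parameter) to confine all zeros to a fixed horizontal strip, use Proposition \ref{prop:zero-Nnu} to forbid zeros on the lines $\Re(z)=m\nu$ along the path, and decide by a second-order expansion of the perturbed zero near $n\nu$ which side of the line it enters. One small omission: in (ii)--(iii) your deformation $b\mapsto sb$ starts with the zero of $\phi_-$ sitting exactly at $n\nu$, on the common boundary of $\C_{n-1}$ and $\C_n$, so the expansion step (here $\mu_{-}(s)$ is real positive and pushes the zero to the right of $n\nu$) is needed there too, not only for $\phi_+$ in (i); you should state it, and also say a word about the boundary line $\Re(z)=0$ of $\C_0$, which Proposition \ref{prop:zero-Nnu} does not cover.

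The genuine gap is in (iv). Your path (fix $a$, increase $b$ from $a/2$) forces $\theta$ to sweep down from $+\infty$ through infinitely many integers, so zeros of $\phi_-$ must interact with the lines $\Re(z)=m\nu$, and your count rests on each such event being a transverse crossing. You reduce transversality to $\phi_-'(z)\neq 0$, but that is not enough: simplicity of the zero only gives analytic dependence $b\mapsto z(b)$ via the implicit function theorem; it does not prevent $\Re z(b)$ from having a critical point at the crossing parameter, i.e.\ the zero could touch the line and retreat, in which case passing an integer value of $\theta$ changes no strip count and your bookkeeping fails. To repair this you would have to compute $\partial_b\,\Re z(b)$ at the crossing and show it is nonzero, a computation not contained in Proposition \ref{prop:zero-Nnu}; moreover your path may meet the exceptional set of Proposition \ref{prop:multiplicities-z-z2}, where two zeros of $\phi_-$ collide inside a strip, so the labels $z_{n,-}$ you track are not globally well defined (harmless for counts, but it breaks the phrasing of your argument). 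The paper sidesteps all of this by deforming along the curve $a(s)=sa$, $b(s)$ chosen so that $\theta$ stays \emph{constant}: then either $\theta\notin\N$ and no zero of $\phi_-$ ever meets the relevant lines, or $\theta=n$ and for every $s$ exactly one simple zero sits on $\Re(z)=n\nu$, which the other zeros cannot cross; the count in each $\C_n$ is then read off at small $s$ from the expansion $\tilde\z_n(s)=n\nu-\frac{isa}{2n\pi}+\frac{s^2a^2(\theta^2-n^2)}{4\pi^2\nu n^3\theta^2}+\Oc(s^3)$, whose real part places the zero in $\C_n$ or $\C_{n-1}$ according to the sign of $\theta-n$. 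Either adopt such a constant-$\theta$ path or supply the missing transversality estimate; as written, case (iv) is not proved.
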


\begin{proof} 
\stepp We begin with the case $a^2 > 4b^2 > 0$. By Proposition \ref{prop:Sigma-disks}, $\Zc_{sa,sb}$ is included in a horizontal strip of $\C$ which does not depend on $s\in [0,1]$. Moreover, by Proposition \ref{prop:zero-Nnu}, $\Zc_{sa,sb}$ does not intersect the vertical lines $\Re(\z) = n\nu$ and $\Re(\z) = (n+1)\nu$ for $s \in ]0,1]$. Then, by the Rouch\'e Theorem, the number of zeros of $\phi_{\pm,s} = \phi_{\pm,sa,sb}$ in $\C_n$ is a continuous and hence constant function of $s \in ]0,1]$.

Assume that $n \neq 0$. We know that $n\nu$ is a simple zero of $\phi_{0}$ (see \eqref{def:phi0}). By the implicit functions theorem there exist $s_0 \in ]0,1]$, a neighborhood $\Vc$ of $n\nu$ in $\C$ and analytic functions $\z_{n,\pm} : [0,s_0] \to \Uc$ such that for $s \in [0,s_0]$ and $\z \in \Vc$ we have $\phi_{\pm,s}(\z) = 0$ if and only if $\z = \z_{n,\pm}(s)$. We can compute 
\begin{equation} \label{eq:zeta-n}
\z_{n,\pm}(s) = n\nu - \frac {i\mu_\pm s}{n\pi} + \frac {\mu_\pm^2 \ell s^2}{n^3 \pi^3} + \bigo s 0 (s^3).
\end{equation}
In particular, for $s > 0$ small enough we have $\z_{n,\pm}(s) \in \C_n$. Moreover, by Proposition \ref{prop:multiplicities-z-z2} applied to $\Tc_s = \Tc_{sa,sb}$, the eigenvalues $\z_{n,\pm}(s)^2$ of $\Tc_s$ are simple (we can also abserve that $n^2\nu^2$ has multiplicity 2 for $\Tc_0$ and splits into two distinct eigenvalues $z_{n,\pm}(s)^2$ of $\Tc_s$, so each of these two eigenvalues is necessarily simple for $s > 0$ small).

We proceed similarly around $(n+1)\nu$, and we see that the zero $(n+1)\nu$ of $\phi_{\pm,0}$ moves to $\C_{n+1}$ for $s > 0$ small. Finally, for $s > 0$ small the functions $\phi_{\pm,s}$ have exactly one simple zero in $\C_n$. We recall that by Proposition \ref{prop:eq-z} applied to $\Tc_s$, the zeros of $\phi_{+,s}$ and $\phi_{-,s}$ cannot meet. Then the functions $\phi_{\pm,s}$ have exactly one simple zero in $\C_n$ for any $s \in ]0,1]$ .
All this holds in particular for $s = 1$ and the first statement is proved for $n\neq 0$.

We proceed similarly for $n = 0$. 0 is a zero of $\phi_{\pm,0}$ of multiplicity 2, which splits into two opposite simple zeros for $s > 0$ small (once squared, they correspond to the same eigenvalue of $\Tc_s$). One of these two zeros is in $\C_0$, and we conclude similarly. 
 
\stepp When $a > 0$ and $b = 0$ we know that $n\nu$ is a zero of $\vf_{-,s}$ for all $s \in [0,1]$ and the zero of $\vf_{+,s}$ near $n\nu$ behaves as in the previous case.

\stepp We continue with the case $a^2 = 4b^2$. We set $\phi_s = \phi_{\pm,s}$. 
As above, if $n \neq 0$ then $n\nu$ is a simple zero of $\phi_0$, and for $s > 0$ small there is a unique zero $\z_n$ of $\phi_s$ near $n\nu$, and \eqref{eq:zeta-n} holds for $\z_n(s)$ with $\mu_\pm = \frac a 2$. 
Then for $s > 0$ small the function $\phi_s$ has a unique zero in $\C_n$. Since this is a simple zero there is no splitting, we have a unique simple zero in $\C_n$ for all $s \in ]0,1]$. Then, by continuity of the spectrum of $\Tc_s$ with respect to $s$, the corresponding eigenvalue $\z_n(s)^2$ of $\Tc_s$ has algebraic multiplicity 2, as is the case for the eigenvalue $n^2 \nu^2$ of $\Tc_0$. As above we deal similarly with the case $n = 0$.

\stepp We finish with the case $a^2 < 4b^2$. The zeros of $\phi_{s,+}$ behave exactly as in the first case: the asymptotic expansion \eqref{eq:zeta-n} still holds and, since $\mu_+$ has a positive imaginary part, we still have $\z_n \in \C_n$ for $s > 0$ small. The same applies to the zeros of $\phi_{-,s}$ if $a = 0$ and $b \neq 0$.

\stepp Things are different for the zeros of $\phi_{s,-}$ in general, since by Proposition \ref{prop:zero-Nnu} they can go through the vertical lines $\Re(z) \in \nu \N$. Assume that $a > 0$. For $s \in [0,1]$ we set 
\[
a(s) = sa, \quad b(s) = \frac {sa\e_b}2  \sqrt{1 + \frac 1 {4\th^2\pi^2} \ln\left( \frac {2\th \nu + sa}{2\th\nu-sa} \right)^2},
\]
where $\e_b \in \{\pm 1\}$ is the sign of $b$ and $\theta$ is defined by \eqref{def:theta}. We also set $\mu(s) = \mu_{-,a(s),b(s)}$ and, for $z \in \C$, $\tilde \phi_s(z) = \phi_{-,a(s),b(s)}(z)$. We have 
\[
\mu(s) = \frac {sa} 2 - \frac {is^2 a^2}{4\th^2 \pi \nu} + \Oc(s^3).
\]
We set $\tilde \Tc_s = \Tc_{a(s),b(s)}$. Then we can proceed as above. If $n \neq 0$, $n\nu$ is a simple zero of $\tilde \phi_0$. By the implicit function Theorem there exist $s_0 \in ]0,1]$, a neighborhood $\Vc$ of $n\nu$ and an analytic function $\tilde \z_n : [0,s_0] \to \Vc$ such that for $s \in [0,s_0]$ and $\z \in \Vc$ we have $\tilde \phi_s(\z) = 0$ if and only if $\z = \tilde \z_n(s)$. Moreover, 
\[
\tilde \z_n(s) = n\nu - \frac {isa}{2n\pi} + \frac {s^2 a^2(\th^2 - n^2)}{4\pi^2 \nu n^3 \th^2} + \bigo s 0 (s^3).
\]
We see that for $s>0$ small enough we have $\tilde \z_n(s) \in \C_n$ if $\th > n$ and $\tilde \z_n(s) \in \C_{n-1}$ if $\th < n$. If $\th = n$, we necessarily have $\Re(\tilde \z_n(s)) = n\nu$ by Proposition \ref{prop:zero-Nnu}.

We proceed similarly around $(n-1)\nu$ and $(n+1)\nu$, and we obtain that for $s > 0$ small the numbers of zeros of $\tilde \phi_s$ with real part in $](n-1)\nu,n\nu[$, $\set {n\nu}$ and $]n\nu,(n+1)\nu[$ depend on the value of $\theta$ as follows.
\begin{equation} \label{eq:tabular}
\begin{tabular}{|c|c|c|c|}
\cline{2-4}
\multicolumn{1}{c}{}&\multicolumn{3}{|c|}{$\Re(\z) \in $}\\
\cline{2-4}
\multicolumn{1}{c|}{} & $](n-1)\nu,n\nu[$ & $\{n\nu\}$ & $]n\nu, (n+1)\nu[$\\
\hline
$\th \leq  n-1$ &  1 & 0 & 1 \\
\hline 
$n-1< \th < n$ & 2 & 0 & 1 \\
\hline 
$\th = n$ & 1 & 1 & 1 \\
\hline 
$n< \th < n+1$ & 1 & 0 & 2 \\
\hline 
$ \th \geq  n+1$ & 1 & 0 & 1\\
\hline
\end{tabular}
\end{equation}

By continuity of the zeros of $\tilde \phi_s$, we can extend this observation for all $s \in [0,1]$. If $\th \neq n$, then the zeros of $\tilde \phi_s$ cannot go through the line $\Re(\z) = n\nu$. If $\th = n$, there is always one zero of $\tilde \phi_s$ on this line. Since this zero is always simple, the zeros in $\C_{n-1}$ or $\C_n$ cannot meet it. Since the same argument is valid for the lines $\Re(\z) = (n-1)\nu$ and $\Re(\z) = (n+1)\nu$, we obtain that \eqref{eq:tabular} holds for any $s \in ]0,1]$. The only difference is that when we have two zeros (for instance in $\C_n$ if $\th \in ]n\nu,(n+1)\nu[$) then for $s$ small we know that we have two simple zeros (one close to $n\nu$ and the other close to $(n+1)\nu$), while for large $s$ they could meet and produce a zero of multiplicity 2 (and this happens for some values of $(a,b)$ by Proposition \ref{prop:multiplicities-z-z2}). The discussion of the case $n = 0$ is the same as above (in this case we necessarily have $\th > n$).
\end{proof}

\end{document}